\crefname{assumption}{assumption}{assumptions}
\Crefname{figure}{Figure}{Figures}
\crefname{equation}{}{}
\Crefname{equation}{Eq.\!}{Eqs.\!}
\setlist[enumerate,2]{label=(\alph*),ref=\theenumi(\alph*)}
\setlist[enumerate,3]{label=\roman*.,ref=\theenumii\roman*}
\let\set\relax
\let\T\relax
\DeclarePairedDelimiterXPP\set[1]{}{\{}{\}}{}{

#1}
\DeclarePairedDelimiter\N{\|}{\|}
\DeclarePairedDelimiter\inner{\langle}{\rangle}
\def\ol{\overline}
\def\ul{\underline}
\def\wtd{\widetilde}
\def\what{\widehat}
\DeclareMathOperator{\diff}{d\!}
\DeclareMathOperator{\rank}{rank}
\DeclareMathOperator{\F}{F}
\DeclareMathOperator{\HH}{H}
\DeclareMathOperator{\T}{T}
\DeclareMathOperator{\OO}{O}
\DeclareMathOperator{\vectorize}{vec}
\newtheorem{theorem}{Theorem}[section]
\newtheorem{lemma}{Lemma}[section]
\theoremstyle{definition}
\newtheorem{remark}{Remark}[section]
\newtheorem{example}{Example}[section]
\numberwithin{equation}{section}
\numberwithin{figure}{section}
\numberwithin{table}{section}
\newcommand\clue[2]{\stackrel{\makebox[0pt][c]{\scriptsize #1}}{#2}\;}
\DeclareMathOperator{\toepL}{\mathcal{L}}
\DeclareMathOperator{\toepU}{\mathcal{U}}
\DeclareMathOperator{\nnz}{nnz}
\DeclareMathOperator{\nres}{NRes}
\DeclareMathOperator{\hami}{\mathcal{H}}
\newcommand\elltwo[1]{\ell_+^{2, #1}}
\newcommand\bs[1]{\boldsymbol{ #1}}
\newcommand\op[1]{\mathscr{ #1}}
\title{The intrinsic Toeplitz structure and its applications in algebraic Riccati equations}
\author{
Zhen-Chen Guo\thanks{Department of Mathematics, Nanjing University,  Nanjing  210093, China; \texttt{e-mail: guozhenchen@nju.edu.cn}. 
Supported in part by NSFC-11901290 and Fundamental Research Funds for the Central Universities.
} \and 
Xin Liang\thanks{Yau Mathematical Sciences Center, Tsinghua University, Beijing 100084, China, and 
Yanqi Lake Beijing Institute of Mathematical Sciences and Applications, Beijing 101408, China;
\texttt{e-mail: liangxinslm@tsinghua.edu.cn}.
Supported in part by NSFC-11901340.
} 
}
\date{}
\begin{document}
\maketitle

\begin{abstract}
	In this paper we derive a Toeplitz-structured closed form of the unique positive semi-definite stabilizing solution for the discrete-time algebraic Riccati equations, especially for the case that the state matrix is not stable.
	Based on the found form and fast Fourier transform,
	we propose a new algorithm for solving both discrete-time and continuous-time large-scale algebraic Riccati equations with low-rank structure.
	It works without unnecessary assumptions, complicated shift selection strategies, or matrix calculations of the cubic order with respect to the problem scale.
	Numerical examples are given to illustrate its features.
	Besides, we show that it is theoretically equivalent to several algorithms existing in the literature in the sense that they all produce the same sequence under the same parameter setting.
\end{abstract}

\smallskip
{\bf Key words.} 
Toeplitz matrix, FFT, algebraic Riccati equations, large-scale, low-rank

\smallskip
{\bf AMS subject classifications.} 
	15A24, 15B05, 65F45, 93B52

\section{Introduction}\label{sec:introduction}
Consider a continuous-time algebraic/limiting Riccati equation (CARE)
\begin{equation}\label{eq:CARE:initial}
	A^{\T} X + X A - X BB^{\T} X + C^{\T}C = 0,
\end{equation}
where $A\in \mathbb{R}^{n\times n}, B\in \mathbb{R}^{n\times m}, C\in \mathbb{R}^{l\times n}$.
The CAREs arise in various models related to control theory, such as linear-quadratic optimal regulator design, and $H^2$ and $H^\infty$ controller design for linear systems, see, e.g., \cite{lancasterR1995algebraic, antsaklisM2007linear}.
They also arise in nonlinear systems, like nonlinear controller design by state-dependent Riccati equations \cite{cimen2000statedependent}, or solving differential Riccati equations by implicit integration schemes \cite{dieci1992numerical,bennerM2018numerical}.
Usually \cref{eq:CARE:initial} has infinite many solutions, but in many applications including those mentioned above only the so-called c-stabilizing solution is hoped to be computed.
Here a solution $X$ is called c-stabilizing  if $A-BB^{\T}X$ is stable, namely all the eigenvalues of $A-BB^{\T}X$ lie in the open left half complex plane $\mathbb{C}_-$.
Its existence and uniqueness are guaranteed by the assumption that the pairs $(A,BB^{\T})$ and $(A^{\T},C^{\T}C)$ are c-stabilizable, or equivalently, $\rank(\begin{bmatrix}
	A-\lambda I & BB^{\T}
\end{bmatrix})=\rank(\begin{bmatrix}
A^{\T}-\lambda I & C^{\T}C
\end{bmatrix})=n$ for any $\lambda\in \mathbb{C}\setminus \mathbb{C}_-$.

During many years, people have developed many numerical methods to find out the c-stabilizing solution of \cref{eq:CARE:initial}. 
Reader are referred to \cite{biniIM2012numerical} to obtain an overview.
In this paper, we are focusing on a special case that $A$ is large-scale and sparse, and $B,C$ are low-rank, namely $m,l\ll n$.
The existing methods are categorized into four classes:
\begin{enumerate}
	\item 
projection methods, including extended Krylov subspace method \cite{heyouniJ2009extended}, rational Krylov subspace method \cite{druskinS2011adaptive}, tangential rational Krylov subspace method \cite{druskinSZ2014adaptive}, global extended Krylov subspace method \cite{jbilou2006arnoldi}, etc.;
	\item 
non-projective iterations, including quadratic ADI \cite{wongB2005quadratic}, Cayley transformed Hamiltonian subspace iteration \cite{linS2015new}, RADI \cite{bennerBKS2018radi}, etc.;
	\item 
		Newton-type methods, including the Galerkin projected variant of New\-ton-Kleinman ADI \cite{bennerS2010newtongalerkinadi} and its inexact line-search variant \cite{bennerHSW2016inexact}, etc.;
	\item 
		methods adopted from those suited for small-scale problems, including structure-preserving doubling algorithm (SDA) \cite{chuFL2005structurepreserving, liCLW2013solving}, and Hamiltonian stable subspace methods \cite{amodeiB2010invariant,bennerB2016solution}, etc..
\end{enumerate}
Many more methods and references can be listed if we bring in more details. 
Interested readers are encouraged to look through
a comparison paper \cite{bennerBKS2020numerical} and the references therein.

The methods in the former three classes use a lot of shifts in the calculation process, so a shift selection strategy rather than several pre-chosen shifts is needed.
Different shifts or strategies usually affect the convergence speed significantly.
Moreover, the convergence of those methods usually relies on more assumptions, for example, $A$ is stable. 
To deal with the problems without the guarantee, the preprocessing is necessary and costs not little calculations.
On the opposite, the methods in the latter class, like SDA, only use one shift (or a few shifts if the incorporation technique is adopted),
which helps decrease the calculation that is not directly related to the solution.

On the other hand, the methods in the former three classes only use matrix-vector multiplication and inverse-vector multiplication (that is actually done by linear system solvers),
while SDA uses matrix-matrix and inverse-matrix multiplication (also done by linear system solvers),
which implies that SDA consumes much more time than those in the former classes.

In this paper, first we contribute a Toeplitz-structured closed form of the d-stabilizing solution of discrete-time algebraic Riccati equations (DAREs) by theoretical analysis,
which naturally induces a new algorithm named FFT-based Toeplitz-structured approximation (FTA) to solve DAREs.
The proposed FTA method exploits the fast Fourier transform (FFT) to reduce the time complexity.
Then using a Cayley transformation that transforms CAREs to DAREs, the FTA is successfully adopted to solve CAREs,
where the incorporation technique (a.k.a. defect correction) is applied to deal with the case that the truncated approximation does not provide enough accuracy.
The FTA solves DAREs and CAREs without more assumptions, shift selection strategies, or matrix-matrix/inverse-matrix multiplications.
As a by-product, we show that FTA, SDA, and many other methods like RADI are equivalent under the same parameter setting including the same initial guess $0$ and the same consistent shift, in the sense that they all produce the same sequence (or subsequence).

The rest of the paper is organized as follows. First, some notations are used.
In \cref{sec:preliminary} we present a detailed form of the inverse of special matrices of the form $I+TT^{\T}$ where $T$ is block-Toeplitz,
whose proof, not an easy consequence of the theory on Toeplitz matrices, is put in \cref{ssec:displacement-rank-and-toeplitz-matrix} for readability.
\Cref{sec:dare} generalizes the idea on the Toeplitz operator in the associated discrete-time dynamic systems under good conditions to those without good conditions, and then naturally induces a closed form of the d-stabilizing solution of DAREs, where the special-structured matrices are involved,
which suggests us to develop the FTA method to solve DAREs.
As is shown in \cref{sec:care}, an variant of FTA for CAREs is obtained with the help of Cayley transformation that transforms CAREs to DAREs.
Numerical tests and discussions are given in \cref{sec:experiments-and-discussions}.
Some concluding remarks are provided in \cref{sec:conclusions}.

\textbf{Notation.}
Throughout this paper,
$I_n$ (or simply $I$ if its dimension is clear from the context) is the $n\times n$ identity matrix. 
Given a vector or matrix $X$,
$X^{\T}$, $X^{\HH}$, $\N{X}$, $\N{X}_{\F}$, $\rho(X)$ are its transpose,
conjugate transpose,
spectral norm, Frobenius norm, and spectral radius respectively.
By $X \otimes Y$ denote the Kronecker product of $X$ and $Y$.
By $\Re \alpha$ denote the real part of a complex number $\alpha$.

We use $X\succ 0$ ($X\succeq 0$) to indicate that $X$ is symmetric positive (semi-)definite, and $X\prec 0$ ($X\preceq 0$) if $-X\succ 0$ ($-X\succeq 0$).
Some easy identities are given:
\begin{equation}\label{eq:easy}
	U(I+V^{\T}U)=(I+UV^{\T})U,\qquad
	U(I+V^{\T}U)^{-1}=(I+UV^{\T})^{-1}U.
\end{equation}
Here is the Sherman-Morrison-Woodbury formula: 
\begin{equation}\label{eq:smwf}
(M + UDV^{\T})^{-1} = M^{-1} - M^{-1} U (D^{-1} + V^{\T} M^{-1} U)^{-1} V^{\T} M^{-1}.
\end{equation}
The inverse sign in \cref{eq:easy,eq:smwf} indicates invertibility.
Both will be applied occasionally.  

In addition, all the discussions below are based on the field $\mathbb{R}$.
They are also valid on the field $\mathbb{C}$, with all  $(\cdot)^{\T}$ replaced  by $(\cdot)^{\HH}$. 

\section{Preliminary }\label{sec:preliminary}
The block-Toeplitz matrices appear in the subsequent sections and play an important role in the proposed algorithms. 
Since 1970s, people have known that 
fast and superfast algorithms are valid for Toeplitz matrices, due to its low displacement rank, see, e.g.,~\cite{kailathKM1979displacement,kailathC1989generalized,kailathC1994generalized,kailathS1995displacement,friedlanderMKL1979new}.
However, 
to keep algebraic Riccati equations in mind, here we only introduce the notations related to block-Toeplitz matrices, and give a lemma that is used in the discussions on algebraic Riccati equations,
while its proof is placed in \cref{ssec:displacement-rank-and-toeplitz-matrix}.

Given $A_0,A_1,\dots,A_{m-1}\in \mathbb{R}^{p_1\times p_2}$, we will use
\[
	\toepL_{p_1\times p_2}\left(\begin{bmatrix}
			A_0\\A_1\\\vdots\\A_{m-1}
	\end{bmatrix}\right)
	=\begin{bmatrix}
	   A_0         &        &         &        &    &   \\
	   A_1        & A_0      &         &        &    &   \\
	   A_2       & A_1     & \ddots  &        &    &   \\
	   \vdots    & \ddots       & \ddots  & \ddots &    &   \\
	   \vdots    &        & \ddots        & A_1     & A_0  &   \\
	   A_{m-1} & \cdots & \cdots  & A_2    & A_1 & A_0 \\
   \end{bmatrix}\in \mathbb{R}^{p_1m\times p_2m}.
\]
For ease, $\toepL_{p_1\times p_2}(A)=\toepL_{p_1\times p_2}\left(\begin{bmatrix}
		A_0\\A_1\\\vdots\\A_{m-1}
	\end{bmatrix}\right)$ if $A=\begin{bmatrix}
	A_0\\A_1\\\vdots\\A_{m-1}
\end{bmatrix}$,
and this notation makes no confusion for the subscript $\cdot_{p_1\times p_2}$ demonstrates how the matrix is composed.
Similarly,
\[
	\toepU_{p_1\times p_2}\left(\begin{bmatrix}
			A_0\\A_1\\\vdots\\A_{m-1}
	\end{bmatrix}\right)
	=\begin{bmatrix}
		A_{m-1} & \cdots  & \cdots & A_2    & A_1 & A_0     \\
                & A_{m-1} & \ddots &        & A_2 & A_1     \\
                &         & \ddots & \ddots &     & A_2     \\
                &         & & \ddots & \ddots     & \vdots  \\
				&         & &        & A_{m-1}    & \vdots  \\
                &         &        &        &     & A_{m-1} \\
   \end{bmatrix}\in \mathbb{R}^{p_1m\times p_2m}.
\]
Besides,
\begin{align*}
	\toepL_{p_2\times p_1}\left(\begin{bmatrix}
				A_0&A_1&\cdots&A_{m-1}
		\end{bmatrix}^{\T}\right)^{\T}
		&=
		\toepU_{p_1\times p_2}\left(\begin{bmatrix}
				A_{m-1}\\\vdots\\A_1\\A_0\\
		\end{bmatrix}\right)
		,\\
		\toepU_{p_2\times p_1}\left(\begin{bmatrix}
				A_0&A_1&\cdots&A_{m-1}
		\end{bmatrix}^{\T}\right)^{\T}
		&=
		\toepL_{p_1\times p_2}\left(\begin{bmatrix}
				A_{m-1}\\\vdots\\A_1\\A_0\\
		\end{bmatrix}\right)
		.
\end{align*}

The following lemma will be used several times later.
\begin{lemma}\label{lm:form-of-inverse:dare}
	Given $Y\in \mathbb{R}^{p_1\times p_2}, D_{t-1}\in \mathbb{R}^{p_1(t-1)\times p_2}$, let 
\[
	T_t
	=\toepL_{p_1\times p_2}\left(\begin{bmatrix}
		Y \\ D_{t-1}\\
	\end{bmatrix}\right)
	=\begin{bmatrix}
		Y  & 0\\ D_{t-1} & T_{t-1}
	\end{bmatrix}\in \mathbb{R}^{p_1t\times p_2t}
	.
\]
Then
\begin{subequations}\label{eq:lm:form-of-inverse:dare}
\begin{equation}
	(I_{p_1t}+T_tT^{\T}_t)^{-1}=
			\begin{multlined}[t]
			\toepU_{p_1\times p_1}\left(\begin{bmatrix}
					Q_2\\Q_1\\
					\end{bmatrix}\right)(I_t\otimes Q_1)^{-1}\toepU_{p_1\times p_1}\left(\begin{bmatrix}
					Q_2\\Q_1\\
					\end{bmatrix}\right)^{\T}
			\\+\toepU_{p_1\times p_2}\left(\begin{bmatrix}
					Q_3\\0\\
					\end{bmatrix}\right)(I_t\otimes \left[W+WY^{\T}Y W\right])^{-1}\toepU_{p_1\times p_2}\left(\begin{bmatrix}
					Q_3\\0\\
					\end{bmatrix}\right)^{\T}
	,
			\end{multlined}
\end{equation}
where $Q_1,Q_2,Q_3$ are solutions to the following equations respectively, and $Q_1,W+WY^{\T}YW$ are nonsingular:
\begin{align}
	\left(I_{p_1(t-1)}+D_{t-1}D^{\T}_{t-1}+T_{t-1}T^{\T}_{t-1}\right)Q_3&=D_{t-1},
\quad W=I_{p_1}-Q^{\T}_3D_{t-1},
	\\
	(I_{p_1t}+T_tT^{\T}_t)
	\begin{bmatrix}
		Q_2\\ Q_1
		\end{bmatrix}&=
		\begin{bmatrix}
			0 \\ I_{p_1}\\
	\end{bmatrix},
	\quad Q_1\in \mathbb{R}^{p_1\times p_1}.
\end{align}
\end{subequations}
\end{lemma}

\section{DARE}\label{sec:dare}
Given a linear time-invariant control system in discrete-time:
\begin{equation}\label{eq:d-control-sys}
	\begin{aligned}
		x_0 &\; \text{is given}, \\
		x_{k+1} & = A x_k + B u_k, \qquad k=0,1,2, \dots,\\
			 y_k&=C x_k,
	\end{aligned}
\end{equation}
where $A\in \mathbb{R}^{n\times n}, B\in \mathbb{R}^{n\times m}, C\in \mathbb{R}^{l\times n}$.
Suppose the following condition holds through out this section:
\[
	\fbox{$(A,B)$ is d-stabilizable and $(C,A)$ is d-detectable,}
\]
or equivalently, 
$\rank(\begin{bmatrix}
	A-\lambda I & BB^{\T}
\end{bmatrix})=\rank(\begin{bmatrix}
A^{\T}-\lambda I & C^{\T}C
\end{bmatrix})=n$ for any $\lambda\in \mathbb{C}\setminus \mathbb{D}$,
where $\mathbb{D}$ is the open unit disk.

Its linear-quadratic optimal control can be expressed as
\begin{equation}\label{eq:opt-control}
	\arg\min_{\set{u_k}}\frac{1}{2}\sum_{k=0}^\infty(y_k^{\T}y_k+u_k^{\T}u_k)=
 \set*{u_k = -(I + B^{\T} X_{\star} B)^{-1} B^{\T} X_{\star} A x_k},
\end{equation}
where $X_\star$ is the unique symmetric positive semi-definite  d-stabilizing solution of the DARE~\cite{biniIM2012numerical,chuFLW2004structurepreserving,lancasterR1991solutions,mehrmann1991automomous}:  
\begin{equation}\label{eq:dare}
-X + A^{\T} X (I + BB^{\T}X)^{-1} A + C^{\T}C = 0.  
\end{equation}
Here a solution $X$ is called d-stabilizing, if the closed loop matrix $A_{X}=(I+BB^{\T}X)^{-1}A$ is d-stable,
namely all of its eigenvalues lie in the open unit disk $\mathbb{D}$,
or equivalently, $\rho(A_X)<1$.

\subsection{In the operator view}\label{ssec:in-the-view-of-operator-theory}
In this section, we briefly state the existence and uniqueness of $X_\star$ shown by the operator theory, which is based on the monograph \cite{ionescuOW1999generalized}.

In order to make things simple, first we assume that $A$ is d-stable.
Write $\bs x=\set{x_k}_{k\in \mathbb{N}},
\bs u=\set{u_k}_{k\in \mathbb{N}},
\bs y=\set{y_k}_{k\in \mathbb{N}}
$.
Let $\elltwo n$ denote the Hilbert space of norm-square summable $\mathbb{R}^n$-valued series.

Suppose $\bs u\in \elltwo m$ and consider the cost functional (a.k.a. restricted quadratic index)
\[
	J(\bs u)=\sum_{k=0}^{+\infty}\begin{bmatrix}
		x_k \\ u_k
	\end{bmatrix}^{\T}\begin{bmatrix}
	Q & L \\ L^{\T} & R
	\end{bmatrix}\begin{bmatrix}
		x_k \\ u_k
	\end{bmatrix}=\inner*{\begin{bmatrix}
		\bs x\\ \bs u
	\end{bmatrix}, \begin{bmatrix}
	Q & L \\ L^{\T} & R
	\end{bmatrix}\begin{bmatrix}
		\bs x\\ \bs u
\end{bmatrix}}_{\elltwo n\times \elltwo m},
\]
where $\bs x$ satisfies \cref{eq:d-control-sys}.
Here for any matrix $U$ and any series $\bs z=\set{z_k}_{k\in \mathbb{N}}$, $U\bs z$ is understood as $U\bs z:=\set{Uz_k}_{k\in \mathbb{N}}$.

In fact $\bs x=\op F x_0+\op L \bs u\in \elltwo n$,
where $\op F\colon \mathbb{R}^n \to \elltwo n, (\op F x_0)_k=A^k x_0,k\ge 0$, and
$\op L\colon \elltwo m \to \elltwo n, (\op L \bs u)_0=0, (\op L \bs u)_k=\sum_{i=0}^{k-1}A^{k-i-1}Bu_i, k\ge 1$.
Clearly $\op F$ and $\op L$ are bounded linear operators.
Also, it is not difficult to find $\op L$ is a Toeplitz operator.
Hence
\begin{align*}
	J(\bs u)
	&=\inner*{\begin{bmatrix}
		\bs x\\ \bs u
	\end{bmatrix}, \begin{bmatrix}
	Q & L \\ L^{\T} & R
	\end{bmatrix}\begin{bmatrix}
		\bs x\\ \bs u
\end{bmatrix}}
\\&=\inner*{\begin{bmatrix}
	\op F & \op L\\ & I
	\end{bmatrix}\begin{bmatrix}
		x_0 \\ \bs u
\end{bmatrix}, \begin{bmatrix}
	Q & L \\ L^{\T} & R
	\end{bmatrix}\begin{bmatrix}
	\op F & \op L\\ & I
	\end{bmatrix}\begin{bmatrix}
		x_0 \\ \bs u
\end{bmatrix}}
\\&=\inner*{\begin{bmatrix}
		x_0 \\ \bs u
\end{bmatrix}, \begin{bmatrix}
	\op P_o & \op P\\ \op P^*& \op R
	\end{bmatrix}\begin{bmatrix}
		x_0 \\ \bs u
\end{bmatrix}}\qquad (\text{$\op A^*$ is the adjoint of operator $\op A$})
\end{align*}
where $\op P_o=\op F^* Q\op F, \op P=\op F^*(Q\op L+L), \op R= R+L^{\T}\op L+\op L^* L + \op L^* Q\op L$.
Then the unique symmetric d-stabilizing solution $X_{\star}$ of the DARE \cref{eq:dare} is given by
\begin{equation}\label{eq:closed-sol:operator}
	X_{\star}=\op P_o-\op P\op R^{-1}\op P^*.
\end{equation}
Clearly, $\op P_o, \op P, \op R$ are bounded linear operators.
\cite[Theorem~4.7.1]{ionescuOW1999generalized} tells that the DARE \cref{eq:dare} has a unique d-stabilizing solution, if and only if the Toeplitz-like operator $\op R$ has a bounded inverse.


In the following, we derive the (infinite) matrix representation of \cref{eq:closed-sol:operator}, 
which is not provided in \cite{ionescuOW1999generalized}.
Here we only show a simple case that $Q=C^{\T}C,L=0,R=I$.
Obviously, the matrix representations of $\op F, \op L$, still denoted by $\op F,\op L$, are
\[
	\op F=\begin{bmatrix}
		I\\ A\\ A^2 \\A^3\\\vdots
	\end{bmatrix},\qquad
	\op L=\begin{bmatrix}
		0 \\
		B & 0\\
		AB & B & 0\\
		A^2B & AB & B & 0 \\
		\vdots & \ddots & \ddots & \ddots & \ddots 
	\end{bmatrix}.
\]
Hence
\begin{align*}
	X_{\star}
	&={\op F}^* Q{\op F}-{\op F}^*( Q{\op L}+ L)( R+ L^{\T}{\op L}+{\op L}^*  L + {\op L}^*  Q{\op L})^{-1}({\op L}^* Q^{\T}+ L^{\T}){\op F}
	\\&={\op F}^*  C^{\T}C{\op F}-{\op F}^* C^{\T}C{\op L}( I+ {\op L}^*  C^{\T}C{\op L})^{-1}{\op L}^* C^{\T}C{\op F}
	\\&={\op F}^*  C^{\T}\left[I-C{\op L}( I+ {\op L}^*  C^{\T}C{\op L})^{-1}{\op L}^* C^{\T}\right]C{\op F}
	\\&\clue{\cref{eq:smwf}}{=} {\op F}^*  C^{\T}( I+ C{\op L}{\op L}^* C^{\T})^{-1}C{\op F}
	.
\end{align*}
Write 
\begin{equation}\label{eq:noniter:X:UVT:inf}
	\op V = C\op F=\begin{bmatrix}
		C \\ CA \\ CA^2\\ CA^3\\ \vdots 
	\end{bmatrix} 
	,\quad
	\op T = C\op L=\begin{bmatrix}
		0 \\
		CB & 0\\
		CAB & CB & 0\\
		CA^2B & CAB & CB & 0 \\
		\vdots & \ddots & \ddots & \ddots & \ddots 
	\end{bmatrix}
	,
\end{equation}
and then $\op V\colon \mathbb{R}^n\to\elltwo l,\op T\colon\elltwo m\to\elltwo l, ( I+ {\op T}{\op T}^*)^{-1}\colon \elltwo l\to\elltwo l$ are bounded linear operators, and $\op T$ is also a Toeplitz operator.
Thus
\begin{equation}\label{eq:noniter:X:X:inf}
	X_{\star}
	=\op V^*(I+\op T\op T^*)^{-1}\op V,
\end{equation}
which is a closed form of the d-stabilizing solution.

For the case that $A$ is not d-stable, does a similar closed form of the d-stabilizing solution exist?
In this case, the operators $\op F,\op L$ are no longer bounded linear operators and the series involved may not converge.
In the next subsection, we will show \cref{eq:noniter:X:X:inf} is also the d-stabilizing solution of the DARE \cref{eq:dare} even for the unstable case.

\subsection{In the matrix view}\label{ssec:in-the-view-of-matrix-analysis}

It is well known that $X_\star=\lim_{t\to\infty}X_t$, where $X_t$ is generated by the  difference Riccati equation (DRE):
\begin{equation}\label{eq:ddre}
	X_0=0,\qquad X_{t+1}=\op D(X_t):=C^{\T}C+A^{\T}X_t(I+BB^{\T}X_t)^{-1}A,
\end{equation}
which can be recognized as a variant of fixed point iteration for \cref{eq:dare}.

Based on the fixed point iteration, 
in 1970s, people have developed the doubling algorithm to solve DAREs~\cref{eq:dare} and CAREs. 
Anderson \cite{anderson1978secondorder} proposed a variant, which is recently usually called SDA and has three iterative recursions: 
\begin{subequations}\label{eq:sda}
	\begin{align}
	A_{k+1}& = A_k (I_n + G_k H_k)^{-1} A_k, \label{eq:sda:A}\\ 
	G_{k+1} &= G_k + A_k (I_n + G_k H_k)^{-1}  G_k A_k^{\T},\label{eq:sda:G} \\ 
	H_{k+1} &= H_k + A_k^{\T} H_k(I_n + G_k H_k)^{-1} A_k, \label{eq:sda:H}
	\end{align}
\end{subequations}
provided that all matrix inversions are feasible (i.e.,  $I_n + G_k H_k$ are nonsingular for $k=0, 1, \cdots$).
The initial terms are usually set by
\begin{equation*}\label{eq:initial:dare}
A_0=A,\qquad    
G_0=BB^{\T},\qquad 	
H_0=C^{\T}C.
\end{equation*}
It has been shown that for those initial terms, $I_n + G_k H_k$ are nonsingular for $k\ge 0$, and 
$A_k \to  0$, $G_k \to  Y_\star$ (the solution to the dual DARE) and $H_k \to  X_\star$, all quadratically~\cite{mehrmann1991automomous} except for the critical case~\cite{huangL2009structured}.  

In \cite{anderson1978secondorder}, it is stated clearly that $H_k=X_{2^k}$, implying that the iteration for $H_k$ can be treated as an acceleration of \cref{eq:ddre},
because it only computes the terms $X_1,X_2,X_4,\dots,X_{2^k},\dots$.
Moreover, \cite{anderson1978secondorder} also argued that \cref{eq:ddre} with any initial $X_0\succeq0$ leads $X_t\to X_\star$ in usual situation (but did not mention which situation satisfies).

Questions arise naturally, of which two are:
\begin{enumerate}
	\item can we even only compute less terms in the sequence $\set{X_t}$, namely accelerate \cref{eq:ddre} even further?
		\item how things go when arbitrary initial terms are set?
\end{enumerate}

Before we begin the analysis, a simple property of the DRE \cref{eq:ddre} is given.
\begin{lemma}\label{lm:ddre:monotonic}
	The operator $\op D$ is monotonic on the set consisting of all positive semi-definite matrices with respect to the partial order ``$\,\succeq$''.
	In details, if $Z_1\succeq0,Z_2\succeq0$, then
	\[
		Z_1\succeq Z_2 \Rightarrow \op D(Z_1)\succeq\op D(Z_2).
	\]
\end{lemma}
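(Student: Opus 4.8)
The plan is to prove the monotonicity of $\op D$ by exhibiting it as a composition of maps whose monotonicity is either classical or easy to verify. Recall that $\op D(Z) = C^{\T}C + A^{\T}Z(I+BB^{\T}Z)^{-1}A$, and note that the additive term $C^{\T}C$ and the congruence $Z \mapsto A^{\T}ZA$ preserve the order, so the real content is the map $Z \mapsto Z(I+BB^{\T}Z)^{-1}$ restricted to positive semi-definite arguments. First I would rewrite this map in a manifestly symmetric form: using the easy identities \cref{eq:easy} one has $Z(I+BB^{\T}Z)^{-1} = (I+ZBB^{\T})^{-1}Z$, and more usefully, via a symmetric factorization $Z = Z^{1/2}Z^{1/2}$ one gets $Z(I+BB^{\T}Z)^{-1} = Z^{1/2}(I + Z^{1/2}BB^{\T}Z^{1/2})^{-1}Z^{1/2}$, which is symmetric positive semi-definite and, crucially, equals $(Z^{-1} + BB^{\T})^{-1}$ when $Z \succ 0$.

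The second step handles the order-preservation of the map $Z \mapsto (Z^{-1}+BB^{\T})^{-1}$ on the positive definite cone: this is the composition of $Z \mapsto Z^{-1}$ (order-reversing on $\succ 0$), followed by $W \mapsto W + BB^{\T}$ (order-preserving), followed again by inversion (order-reversing), and two reversals compose to a preservation. Each fact here is standard: $0 \prec Z_2 \preceq Z_1 \Rightarrow 0 \prec Z_1^{-1} \preceq Z_2^{-1}$. Hence for $Z_1 \succeq Z_2 \succ 0$ we get $Z_1(I+BB^{\T}Z_1)^{-1} \succeq Z_2(I+BB^{\T}Z_2)^{-1}$, and then applying the congruence by $A^{\T}$ on the left, $A$ on the right, and adding $C^{\T}C$ gives $\op D(Z_1) \succeq \op D(Z_2)$.

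The third step is a continuity/limiting argument to remove the strict-definiteness assumption, since the lemma is stated for all positive semi-definite $Z_1 \succeq Z_2 \succeq 0$. Replace $Z_i$ by $Z_i + \varepsilon I \succ 0$; the inequality $Z_1 + \varepsilon I \succeq Z_2 + \varepsilon I$ still holds, so the result applies, and then I would let $\varepsilon \downarrow 0$, using that $Z \mapsto Z(I+BB^{\T}Z)^{-1}$ is continuous at any positive semi-definite $Z$ — which follows because $I + BB^{\T}Z$ is invertible there (its eigenvalues are $1$ together with eigenvalues of $BB^{\T}Z$, all of which have nonnegative real part, so none equals $0$), and matrix inversion is continuous on the open set of invertible matrices. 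Taking limits preserves the (non-strict) partial order, yielding $\op D(Z_1) \succeq \op D(Z_2)$ in general.

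The main obstacle is not any single estimate but getting the symmetric reformulation clean: the identity $Z(I+BB^{\T}Z)^{-1} = (Z^{-1}+BB^{\T})^{-1}$ for $Z \succ 0$ needs a short verification (multiply both sides by $Z^{-1}+BB^{\T}$ on the right and by $Z$ on the left, or invoke \cref{eq:smwf}), and one must be slightly careful that $I+BB^{\T}Z$ is genuinely invertible whenever $Z \succeq 0$ so that $\op D$ is even well-defined on the whole cone — this is exactly the nonnegative-spectrum observation above. Once those two points are in place, the rest is a routine chaining of the elementary order-reversal property of matrix inversion.
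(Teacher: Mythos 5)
Your proposal is correct and follows essentially the same route as the paper: reduce to the positive definite case via the identity $Z(I+BB^{\T}Z)^{-1}=(Z^{-1}+BB^{\T})^{-1}$ and the order-reversing property of inversion, then handle semi-definite $Z_2$ by perturbing with $\varepsilon I$ and passing to the limit. Your explicit justification of well-definedness and continuity (invertibility of $I+BB^{\T}Z$ on the whole cone) is a detail the paper leaves implicit, but the argument is the same.
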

\begin{proof}
	First suppose $Z_2\succ0$ and thus $Z_2$ is nonsingular.
	Then
	\begin{align*}
		Z_1\succeq Z_2
		\Leftrightarrow Z_1^{-1}\preceq Z_2^{-1}
		&\Leftrightarrow (Z_1^{-1}+BB^{\T})^{-1}\succeq (Z_2^{-1}+BB^{\T})^{-1}
		\\&\Leftrightarrow Z_1(I+BB^{\T}Z_1)^{-1}\succeq Z_2(I+BB^{\T}Z_2)^{-1}
		\Rightarrow \op D(Z_1)\succeq\op D(Z_2)
		.
	\end{align*}
	If $Z_2$ is singular, then $Z_2+\varepsilon I\succ0$ for any $\varepsilon>0$.
	Thus, taking limits yields
	\[
		Z_1\succeq Z_2
		\Leftrightarrow Z_1+\varepsilon I \succeq Z_2+\varepsilon I
		\Rightarrow \op D(Z_1+\varepsilon I)\succeq\op D(Z_2 + \varepsilon I)
		\Rightarrow \op D(Z_1)\succeq\op D(Z_2)
		.
		\qedhere
	\]
\end{proof}
Then we may ensure the validity of \cref{eq:noniter:X:X:inf}.
\begin{theorem}\label{thm:fixedpoint}
Write 
	\begin{equation}\label{eq:noniter:X:UVT}
			V_t = \begin{bmatrix}
					C \\ CA \\ CA^2\\ \vdots \\ \vdots \\ CA^{t-1}
				\end{bmatrix} 
				,\quad
				T_t = \begin{bmatrix}
				   0         &        &         &        &    &   \\
				   CB        & 0      &         &        &    &   \\
				   CAB       & CB     & \ddots  &        &    &   \\
				   \vdots    &        & \ddots  & \ddots &    &   \\
				   \vdots    &        &         & CB     & 0  &   \\
				   CA^{t-2}B & \cdots & \cdots  & CAB    & CB & 0 \\
				\end{bmatrix}
				,
	\end{equation}
	$T_1=0$. 
	Then the terms of the sequence $\set{X_t}$ generated by the DRE \cref{eq:ddre} are given by
	\begin{equation}\label{eq:noniter:X:X}
		X_t
		= V_t^{\T}(I+T_tT_t^{\T})^{-1}V_t
		, \qquad t=1,2,\dots.
	\end{equation}
	Moreover, $\set{X_t}$ is monotonically nondecreasing, and $X_t\to X_{\star}$, the d-stabilizing solution of DARE \cref{eq:dare}.
\end{theorem}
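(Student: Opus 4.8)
The plan is to obtain \cref{eq:noniter:X:X} by induction on $t$; the monotonicity of $\set{X_t}$ and its convergence to $X_\star$ are not really part of the new content, since they were already settled in the discussion preceding the theorem (monotonicity from \cref{lm:ddre:monotonic} together with $X_0=0$, the upper bound $X_t\preceq X_\star$ by applying $\op D$ to $0\preceq X_\star$ and using \cref{lm:ddre:monotonic}, and then $X_t\to X_\star$ as recalled just after \cref{eq:ddre}), so the proof can simply refer back to those.

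For the base case $t=1$ one has $V_1=C$ and $T_1=0$, so the right-hand side of \cref{eq:noniter:X:X} is $C^{\T}C$, which is exactly $X_1=\op D(X_0)=\op D(0)$. For the inductive step the key point is that the data in \cref{eq:noniter:X:UVT} are built recursively,
\[
	V_{t+1}=\begin{bmatrix} C\\ V_tA\end{bmatrix},\qquad
	T_{t+1}=\begin{bmatrix} 0 & 0\\ V_tB & T_t\end{bmatrix},
\]
which is read off directly from the definitions. From the second identity, $T_{t+1}T_{t+1}^{\T}=\diag\!\big(0,\;T_tT_t^{\T}+(V_tB)(V_tB)^{\T}\big)$, hence $I+T_{t+1}T_{t+1}^{\T}$ is block diagonal and
\[
	V_{t+1}^{\T}(I+T_{t+1}T_{t+1}^{\T})^{-1}V_{t+1}
	=C^{\T}C+(V_tA)^{\T}\big(I+T_tT_t^{\T}+(V_tB)(V_tB)^{\T}\big)^{-1}(V_tA).
\]
Comparing this with $X_{t+1}=\op D(X_t)=C^{\T}C+A^{\T}X_t(I+BB^{\T}X_t)^{-1}A$, it remains to show, writing $M_t=(I+T_tT_t^{\T})^{-1}$ and using the inductive hypothesis $X_t=V_t^{\T}M_tV_t$, that
\[
	X_t(I+BB^{\T}X_t)^{-1}=V_t^{\T}\big(M_t^{-1}+V_tBB^{\T}V_t^{\T}\big)^{-1}V_t .
\]
Since $M_t\succ0$ we have $X_t\succeq0$, so $BB^{\T}X_t$ has nonnegative real spectrum and $I+BB^{\T}X_t$ is nonsingular; likewise $M_t^{-1}+V_tBB^{\T}V_t^{\T}\succeq M_t^{-1}\succ0$ is nonsingular, so both sides make sense. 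The displayed identity is then pure algebra: it is an instance of the push-through identities \cref{eq:easy} applied to $X_t=V_t^{\T}M_tV_t$, and can also be checked directly by multiplying $\big(M_t^{-1}+V_tBB^{\T}V_t^{\T}\big)$ into $M_tV_t(I+BB^{\T}V_t^{\T}M_tV_t)^{-1}$ and recovering $V_t$. Substituting back gives $X_{t+1}=V_{t+1}^{\T}(I+T_{t+1}T_{t+1}^{\T})^{-1}V_{t+1}$, closing the induction.

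I expect the only genuine difficulty to be bookkeeping rather than ideas: correctly extracting the recursive block form of the block-Toeplitz matrix $T_t$ so that $I+T_{t+1}T_{t+1}^{\T}$ comes out block diagonal, and carrying the invertibility hypotheses through the manipulation so that every inverse appearing is legitimate. Once the block decomposition is in place, the algebra collapses in a single step, and the ``moreover'' clause is just a pointer to the monotonicity and boundedness already recorded before the theorem.
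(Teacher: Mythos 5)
Your proposal is correct and follows essentially the same route as the paper's proof: induction on $t$, the base case $X_1=C^{\T}C$, the push-through identities \cref{eq:easy} to turn $X_t(I+BB^{\T}X_t)^{-1}$ into $V_t^{\T}(I+T_tT_t^{\T}+V_tBB^{\T}V_t^{\T})^{-1}V_t$, and the block recursion $V_{t+1}=\begin{bmatrix}C\\ V_tA\end{bmatrix}$, $T_{t+1}=\begin{bmatrix}0&0\\ V_tB&T_t\end{bmatrix}$ to close the induction, with the ``moreover'' part deferred to the monotonicity and boundedness argument preceding the theorem. The only difference is cosmetic (you expand the $t+1$ closed form first and match it against the DRE, while the paper massages the DRE into the block form), so nothing further is needed.
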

\begin{proof}
Clearly $X_0=0,X_1=C^{\T}C$.
Using some calculations, we have
\begin{align*}
	X_2
	&=C^{\T}C+A^{\T}C^{\T}C(I+BB^{\T}C^{\T}C)^{-1}A
	\\&=C^{\T}C+A^{\T}C^{\T}(I+CBB^{\T}C^{\T})^{-1}CA
	\\&=\begin{bmatrix}
		C\\ CA
		\end{bmatrix}^{\T}\begin{bmatrix}
	I & \\ &I+CBB^{\T}C^{\T}
	\end{bmatrix}^{-1}\begin{bmatrix}
		C\\ CA
	\end{bmatrix}
	\\&=\begin{bmatrix}
		C\\ CA
		\end{bmatrix}^{\T}\left(I+\begin{bmatrix}
	0 & \\ CB& 0
	\end{bmatrix}\begin{bmatrix}
	0 & \\ CB& 0
	\end{bmatrix}^{\T}\right)^{-1}\begin{bmatrix}
		C\\ CA
	\end{bmatrix}
	.
\end{align*}

	Now \cref{eq:noniter:X:X} is correct for $t=1,2$.
	Assuming \cref{eq:noniter:X:X} is correct for $t$, we are going to prove it is also correct for $t+1$.
	By the DRE \cref{eq:ddre},
\begin{align*}
	X_{t+1}
	&= C^{\T}C + A^{\T}V_t^{\T}\left(I+T_tT_t^{\T}\right)^{-1}V_t
	\left(I+BB^{\T}V_t^{\T}\left(I+T_tT_t^{\T}\right)^{-1}V_t\right)^{-1}A
	\\&\clue{\cref{eq:easy}}{=}
	C^{\T}C + A^{\T}V_t^{\T}\left(I+T_tT_t^{\T}\right)^{-1}
	\left(I+V_tBB^{\T}V_t^{\T}\left(I+T_tT_t^{\T}\right)^{-1}\right)^{-1}V_tA
	\\&=
	C^{\T}C + A^{\T}V_t^{\T}
	\left(I+T_tT_t^{\T}+V_tBB^{\T}V_t^{\T}\right)^{-1}V_tA
	\\&=\begin{bmatrix}
		C\\ V_tA
		\end{bmatrix}^{\T}\begin{bmatrix}
	I & \\ &I+T_tT_t^{\T}+V_tBB^{\T}V_t^{\T}
	\end{bmatrix}^{-1}\begin{bmatrix}
		C\\ V_tA
	\end{bmatrix}
	\\&=\begin{bmatrix}
		C\\ V_tA
		\end{bmatrix}^{\T}\left(I+\begin{bmatrix}
	0 & \\ V_tB& T_t
	\end{bmatrix}\begin{bmatrix}
	0 & \\ V_tB& T_t
	\end{bmatrix}^{\T}\right)^{-1}\begin{bmatrix}
		C\\ V_tA
	\end{bmatrix}
	\\&=V_{t+1}^{\T}(I+T_{t+1}T_{t+1}^{\T})^{-1}V_{t+1}
	.
	\qedhere
\end{align*}

	Then we illustrate the monotonicity of the sequence. 

Since $X_1\succeq X_0=0$, by \cref{lm:ddre:monotonic}, $X_2=\op D(X_1)\succeq\op D(X_0)=X_1$.
Similarly $0=X_0\preceq X_1\preceq X_2\preceq\dots\preceq X_t\preceq\cdots$, namely the sequence $\set{X_t}$ generated by \cref{eq:ddre} is monotonic.
On the other hand, the d-stabilizing solution $X_\star$ is also the unique symmetric positive semi-definite solution. Thus, $X_\star\succeq 0=X_0$, and $X_\star=\op D(X_\star)\succeq \op D(X_0)=X_1$.
Similarly $X_\star\succeq X_t$ for any $t$, namely the sequence $\set{X_t}$ is bounded.
As a result, $X_t$ converges. Write $X_t\to X_\infty$, and then $X_\infty$ is a symmetric positive semi-definite solution to the DARE \cref{eq:dare}.
Then the uniqueness of the symmetric positive semi-definite solution forces $X_\star=X_\infty$.
In other words, it holds that $X_t\to X_{\star}$.
\end{proof}
It is not difficult to discover that \cref{thm:fixedpoint} coincides with the decoupled formulae of the dSDA for DAREs introduced in \cite{guoCLL2020decoupled} at $t=2^k$,
which is actually guaranteed by the fact that the sequence $\set{H_k}$ generated by SDA \cref{eq:sda:H} is a subsequence of $\set{X_t}$.

One can easily find \cref{eq:noniter:X:X} is the truncated form of \cref{eq:noniter:X:X:inf}, a Toeplitz-structured closed form of $X_\star$, whose validity for the d-stable case has been proved by the operator theory in \cref{ssec:in-the-view-of-operator-theory}.
Note that under the assumption that $A$ is d-stable, $V_t$ and $T_t$, treated as the truncations of $\op V$ and $\op T$, converges to $\op V$ and $\op T$ respectively, by the fact that $\op V,\op T$ are bounded linear operators. With the help of operator theory, $X_t\to X_\star$.
To the opposite, for the case that $A$ is not d-stable, $\op V$ and $\op T$ are no longer bounded, and it would be difficult to show $X_t\to X_\star$ by the operator theory. 
However, the matrix analysis reveals that $X_t\to X_\star$, which implies $X_\star$ indeed has the closed form \cref{eq:noniter:X:X:inf} in the unstable case.

\subsection{Efficient method}\label{ssec:efficient-method}
Now we acquire the non-iterative form \cref{eq:noniter:X:X} of $X_t$,
which allows us to compute the terms $X_t$ directly for arbitrary $t$.
In the following, we will work on an efficient method to compute $X_t$ for any given $t$.

Using the notations for Toeplitz matrices in \cref{sec:preliminary},
we have 
\[
	T_t
	=\toepL_{l\times m}\left(\begin{bmatrix}
		0\\ V_{t-1}B\\
	\end{bmatrix}\right)
	=\begin{bmatrix}
		0 & 0\\
		\toepL_{l\times m}( V_{t-1}B) & 0 \\
	\end{bmatrix}=\begin{bmatrix}
		0 & 0\\ V_{t-1}B & T_{t-1}
	\end{bmatrix}
	.
\]
Thus,
\[
\toepL_{l\times m}( V_{t-1}B)\toepL_{l\times m}( V_{t-1}B)^{\T}
=T_{t-1}T_{t-1}^{\T}+V_{t-1}BB^{\T}V_{t-1}^{\T},\
\]
and 
\begin{equation}\label{eq:Xt=CTC+A}
		X_t=
		C^{\T}C + A^{\T}V_{t-1}^{\T}
			\left[I+\toepL_{l\times m}( V_{t-1}B)\toepL_{l\times m}( V_{t-1}B)^{\T}\right]^{-1}V_{t-1}A
			.
\end{equation}
Clearly $\toepL_{l\times m}( V_{t-1}B)$ is block-Toeplitz.
Hence the results in \cref{sec:preliminary} can be applied.


By \cref{lm:form-of-inverse:dare}, $X_t$ can be computed by solving only $l+m$ rather than $n$ equations, shown in \cref{thm:form-of-solution:dare}.
\begin{theorem}\label{thm:form-of-solution:dare}
Let 
\begin{subequations}\label{eq:Q}
	\begin{align}
		\left(I_{(t-1)l}+\toepL_{l\times m}(V_{t-1}B)\toepL_{l\times m}(V_{t-1}B)^{\T}\right)\begin{bmatrix}
			Q_2\\Q_1\\
			\end{bmatrix}&=
			\begin{bmatrix}
				0 \\ I_l \\
			\end{bmatrix}
			,\qquad Q_1\in \mathbb{R}^{l\times l}
			,\\
			\left(I_{(t-1)l}+\toepL_{l\times m}(V_{t-1}B)\toepL_{l\times m}(V_{t-1}B)^{\T}\right)\begin{bmatrix}
				Q_4\\Q_3\\
				\end{bmatrix}&=
				 V_{t-1}B
			,\qquad Q_4\in \mathbb{R}^{l\times m}
			,
	\end{align}
\end{subequations}
and $W=I_{m}-\begin{bmatrix}
				Q_4\\Q_3\\
			\end{bmatrix}^{\T}
			V_{t-1}B$.
	Then the sequence $X_t$ defined by \cref{eq:noniter:X:X} can be generated by
	\begin{equation}\label{eq:Xt=SDS}
		X_t= 
			\begin{bmatrix}
				C \\ \Xi_1 \\  \Xi_2
			\end{bmatrix}^{\T}\begin{bmatrix}
			I_{l} &&\\& (I_{t-1}\otimes Q_1)^{-1}&\\&&(I_{t-1}\otimes W)^{-1}
			\end{bmatrix}\begin{bmatrix}
				C \\ \Xi_1 \\  \Xi_2
			\end{bmatrix},
	\end{equation}
	where
	\[
		\Xi_1 = \toepU_{l\times l}\left(\begin{bmatrix}
				Q_2\\Q_1\\
		\end{bmatrix}\right)^{\T}V_{t-1}A \in \mathbb{R}^{(t-1)l\times n},
		\qquad
		\Xi_2=\toepU_{l\times m}\left(\begin{bmatrix}
				Q_3\\0\\
		\end{bmatrix}\right)^{\T} V_{t-1}A\in \mathbb{R}^{(t-1)m\times n}
		.
	\]
\end{theorem}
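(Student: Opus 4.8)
The plan is to apply Lemma 2.1 directly to the matrix $I+\toepL_{l\times m}(V_{t-1}B)\toepL_{l\times m}(V_{t-1}B)^{\T}$ appearing in \cref{eq:Xt=CTC+A}, and then substitute the resulting expression into \cref{eq:Xt=CTC+A} to obtain \cref{eq:Xt=SDS}. First I would set up the dictionary between the objects of Lemma 2.1 and the present situation: take $T_t$ in the lemma to be $\toepL_{l\times m}\bigl(\begin{bmatrix} 0 \\ V_{t-1}B\end{bmatrix}\bigr)$ (so $p_1 = l$, $p_2 = m$, and the ``$Y$'' block is $0$), and take $T^L_t = -T_t^{\T}$, i.e. $Y^L = 0$ and $D^L_{t-1} = -(V_{t-1}B)^{\T}$ up to the block arrangement dictated by the $\toepU/\toepL$ transpose identities recorded in \cref{sec:preliminary}. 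Then $I - T_t T^L_t = I + T_t T_t^{\T} = I + \toepL_{l\times m}(V_{t-1}B)\toepL_{l\times m}(V_{t-1}B)^{\T}$, which is nonsingular (it is $I$ plus a positive semidefinite matrix), so the hypothesis of Lemma 2.1 is met.

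Next I would unwind the auxiliary equations of Lemma 2.1 under this specialization. Because $Y = 0$ and $Y^L = 0$, the quantity $W + W Y^L Y W$ in the lemma collapses to $W = -(I + Q_3^L D_{t-1})$, and with the sign bookkeeping from $T^L_t = -T_t^{\T}$ this matches the $W = I - \begin{bmatrix} Q_{3,t} \\ Q_{3,c}\end{bmatrix}^{\T} V_{t-1}B$ defined in the statement; similarly the systems defining $Q_3, Q_3^L$ in the lemma become exactly the second system in \cref{eq:Q} (with $Q_3 = \begin{bmatrix}Q_{3,t}\\ Q_{3,c}\end{bmatrix}$ and, by symmetry of the coefficient matrix, $Q_3^L = Q_3^{\T}$), and the systems defining $\begin{bmatrix}Q_2^L & Q_{2,b}\end{bmatrix}$ and $\begin{bmatrix}Q_2 \\ Q_{2,b}\end{bmatrix}$ become the first system in \cref{eq:Q} (again with $Q_2 = \begin{bmatrix}Q_{2,c}\\ Q_{2,b}\end{bmatrix}$ and $Q_2^L = Q_2^{\T}$ by symmetry). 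One should also check that the ``$0$'' block that appears inside the $\toepU_{p_1\times p_2}$ factors of Lemma 2.1 is consistent with the ``$0$'' over $Q_{3,c}$ and $Q_{2,c}$ in the theorem — this is where the top block of $T_t$ being zero is used. At this point Lemma 2.1 yields
\begin{equation*}
\bigl[I+\toepL_{l\times m}(V_{t-1}B)\toepL_{l\times m}(V_{t-1}B)^{\T}\bigr]^{-1}
= \toepU_{l\times l}\!\left(\begin{bmatrix}Q_{2,c}\\Q_{2,b}\end{bmatrix}\right)(I\otimes Q_{2,b})^{-1}\toepU_{l\times l}\!\left(\begin{bmatrix}Q_{2,c}\\Q_{2,b}\end{bmatrix}\right)^{\T}
+ \toepU_{l\times m}\!\left(\begin{bmatrix}Q_{3,c}\\0\end{bmatrix}\right)(I\otimes W)^{-1}\toepU_{l\times m}\!\left(\begin{bmatrix}Q_{3,c}\\0\end{bmatrix}\right)^{\T}.
\end{equation*}

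Finally I would substitute this into \cref{eq:Xt=CTC+A}: multiplying on the right by $V_{t-1}A$ and on the left by its transpose turns the two $\toepU$ factors nearest $V_{t-1}A$ into exactly $\Xi_1$ and $\Xi_2$ as defined in the statement, while the leading $C^{\T}C$ term contributes the $C$-block with identity weight; collecting the three pieces into the block quadratic form gives \cref{eq:Xt=SDS}. The main obstacle I anticipate is purely clerical rather than conceptual: getting the transpose/reversal conventions of $\toepL$ versus $\toepU$ and the sign conventions hidden in $T^L_t = -T_t^{\T}$ to line up so that the $Q$-quantities of Lemma 2.1 really do coincide (not merely up to transpose or sign) with the $Q_{2,c}, Q_{2,b}, Q_{3,t}, Q_{3,c}, W$ of \cref{eq:Q}; the symmetry of the coefficient matrix $I+\toepL(V_{t-1}B)\toepL(V_{t-1}B)^{\T}$ should make the ``left'' and ``right'' solves transposes of one another, which is what collapses the general two-sided formula of Lemma 2.1 into the symmetric sandwiched form stated here, but this needs to be verified carefully block by block.
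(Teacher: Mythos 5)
Your proposal follows essentially the same route as the paper's proof: invoke Lemma~\ref{lm:form-of-inverse:practical} with $Y=-(Y^L)^{\T}\leftarrow 0$, $D_{t-1}=-(D^L_{t-1})^{\T}\leftarrow V_{t-1}B$, use the symmetry of $I+T_tT_t^{\T}$ to identify the left and right solves (and cancel the signs coming from $T_t^L=-T_t^{\T}$), exploit the zero top block of $T_t$ to reduce to the subsystems \cref{eq:Q} with $Q_{2,t}=0$, and substitute the resulting expression \cref{eq:I+LLT:inv} into \cref{eq:Xt=CTC+A}. The clerical checks you flag (sign of $W$, transpose conventions, the vanishing top block) are exactly the ones the paper carries out, so the argument is correct and matches.
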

\begin{proof}
	By \cref{lm:form-of-inverse:dare} with $Y\leftarrow 0, D_{t-1}\leftarrow V_{t-1}B$, we have
\[
	(I+T_tT_t^{\T})^{-1}=
	\toepU_{l\times l}\left(\begin{bmatrix}
			\wtd Q_2\\Q_1\\
			\end{bmatrix}\right)(I\otimes Q_1)^{-1}\toepU_{l\times l}\left(\begin{bmatrix}
			\wtd Q_2\\Q_1\\
	\end{bmatrix}\right)^{\T}
	+\toepU_{l\times m}\left(\begin{bmatrix}
			\wtd Q_3\\0\\
			\end{bmatrix}\right)(I\otimes W)^{-1}\toepU_{l\times m}\left(\begin{bmatrix}
			\wtd Q_3\\0\\
	\end{bmatrix}\right)^{\T}
	,
\]
where
\begin{align*}
	\left(I+\toepL_{l\times m}(V_{t-1}B)\toepL_{l\times m}(V_{t-1}B)^{\T}\right)\wtd Q_3&=V_{t-1}B,
	\quad W=I-\wtd Q_3^{\T}V_{t-1}B,
	\\
	(I+T_tT_t^{\T})
	\begin{bmatrix}
		\wtd Q_2\\ Q_1
		\end{bmatrix}&=
		\begin{bmatrix}
		0 \\ I_l\\
	\end{bmatrix},
	\qquad Q_1\in \mathbb{R}^{l\times l}.
\end{align*}
Note that 
\[
(I+T_tT_t^{\T})^{-1}=\begin{bmatrix}
	I & \\ & 
	\left(I+\toepL_{l\times m}(V_{t-1}B)\toepL_{l\times m}(V_{t-1}B)^{\T}\right)^{-1}
\end{bmatrix}
.
\]
Hence $\wtd Q_2=\begin{bmatrix}
	0_{l\times l} \\ Q_2\\
\end{bmatrix}$ where
\[
	\left(I+\toepL_{l\times m}(V_{t-1}B)\toepL_{l\times m}(V_{t-1}B)^{\T}\right)
\begin{bmatrix}
	Q_2\\ Q_1
\end{bmatrix}=
		\begin{bmatrix}
		0 \\ I_l\\
	\end{bmatrix}.
\]
Write $\wtd Q_3=\begin{bmatrix}
	Q_4\\ Q_3
\end{bmatrix}$ where $Q_4\in \mathbb{R}^{l\times m}$,
and then it follows that
\begin{equation}\label{eq:I+LLT:inv}
	\begin{multlined}
		\left(I+\toepL_{l\times m}(V_{t-1}B)\toepL_{l\times m}(V_{t-1}B)^{\T}\right)^{-1}
		=
		\toepU_{l\times l}\left(\begin{bmatrix}
				Q_2\\Q_1\\
				\end{bmatrix}\right)(I\otimes Q_1)^{-1}\toepU_{l\times l}\left(\begin{bmatrix}
				Q_2\\Q_1\\
		\end{bmatrix}\right)^{\T}
		\\+\toepU_{l\times m}\left(\begin{bmatrix}
				Q_3\\0\\
				\end{bmatrix}\right)(I\otimes W)^{-1}\toepU_{l\times m}\left(\begin{bmatrix}
				Q_3\\0\\
		\end{bmatrix}\right)^{\T}
		.
	\end{multlined}
\end{equation}
Then the result is a direct consequence of \cref{eq:Xt=CTC+A}.
\end{proof}
\Cref{thm:form-of-solution:dare} suggests a new algorithm, \cref{alg:fna-d}, to approximate the solution of DAREs.
The key is how to fast compute $Q_{*,*}$, or equivalently solve the linear systems \cref{eq:Q}, and compute the products of $\toepU_{*}(*)^{\T}V_{t-1}A$.
Both are related to the manipulations on block Toeplitz matrices.
It is well known that the fast Fourier transform (FFT) can be used to accelerate the calculation with Toeplitz matrices involved, see, e.g., \cite{strang1986proposal,jin2002developments,jin2010preconditioning} and the references therein.

\begin{algorithm}[h]
	\caption{FFT-based Toeplitz-structured Approximation (FTA) for DAREs}\label{alg:fna-d}
	\begin{algorithmic}[1]
		\REQUIRE $A\in \mathbb{R}^{n\times n},B\in \mathbb{R}^{n\times m},C\in \mathbb{R}^{l\times n}$ and $t$.
		\STATE Compute sequentially 
		$C\cdot A, CA\cdot A,\dots,CA^{t-3} \cdot A, CA^{t-2}\cdot A$, and form $V_{t-1}\in \mathbb{R}^{(t-1)l\times n}$ by stacking $C$ and the first $t-2$ terms vertically in order, and form $V_{t-1}A$ by stacking the $t-1$ terms vertically in order.
		\STATE Compute $V_{t-1}B\in \mathbb{R}^{(t-1)l\times m}$.
		\STATE Use Preconditioned Conjugate Gradient (PCG) method to solve \cref{eq:Q}.
		\STATE Compute $W=I_{m}-\begin{bmatrix}
				Q_4\\Q_3\\
			\end{bmatrix}^{\T}
			 V_{t-1}B$ and then the Cholesky factorizations of $Q_1=L_QL_Q^{\T}$ and $W=L_WL_W^{\T}$.
		\STATE Use fast multiplication to obtain $S_1=\toepU_{l\times l}\left(\begin{bmatrix}
				Q_2\\Q_1\\
				\end{bmatrix}L_Q^{-\T}\right)^{\T}V_{t-1}A\in \mathbb{R}^{(t-1)l\times n}$ and $S_2=\toepU_{l\times m}\left(\begin{bmatrix}
				Q_3\\0\\
		\end{bmatrix}L_W^{-\T}\right)^{\T} V_{t-1}A\in \mathbb{R}^{(t-1)m\times n}$,
		and form $S=\begin{bmatrix}
			C \\ S_1\\ S_2
		\end{bmatrix}\in \mathbb{R}^{(tl+(t-1)m)\times n}$.
		\ENSURE $S $ which satisfies $S ^{\T}S \approx X_\star\in \mathbb{R}^{n\times n}$.
	\end{algorithmic}
\end{algorithm}

Some remarks are given below to illustrate the algorithm.
\paragraph{Parameter and output}

\begin{enumerate}
	\item In order to use FFT, $t$ is usually chosen as powers of $2$, namely $t=2^k$.
		There is no strategy to determine a proper $t$ in advance. In practice, we may choose a heuristic $k$, for example $5$--$8$.
		If the output is a good approximation of the solution, then we stop here; otherwise, we use the output as a new initial guess, and run another round to achieve a better approximation; the process is repeated until convergence, namely some criterion is satisfied.
		The details and the validity of implementing a new initial guess are discussed in \cref{ssec:arb-term}.
	\item
		Note that $\rank(S)=\rank(V_{2^k})$. 
		Numerically $V_{2^k}$ probably has rank much less than $2^kl$.
		An obvious clue is that $V_{2^k}$ contains a power series of $A$ performing on $C$, and as $k$ goes larger and larger, the terms in it become more and more likely to be linearly dependent.
		This implies that \cref{eq:Xt=SDS} is not a compact form.
		To deal with this, some compression technique may be brought in.
		This idea needs more discussions on the convergence, which is given in \cref{ssec:arb-term}.
		\item 
			In step~3, we use PCG to solve \cref{eq:Q}. To make calculation least, the preconditioner can be chosen as the diagonal part of the linear system. We will use it in the Experiments part below. Other preconditioners may also be considered. In practice, the number of steps of PCG is fixed on an integer $M$. One reason is that if the condition number of the system is not too large, then the PCG would converge fast; another reason is that stopping in the midway will not hurt the outer convergence on $X_\star$, which is implied by \cref{lm:ddre:attractor} below. 
	\item 
		In the output, we do not give an approximation of $X_\star$ directly but its factor, namely 
		a $tl\times n$ matrix $S$.
		If some compression technique is used during the process, an approximation of $S$ would have relatively small low row rank, say $r\ll n$.
		Then in practice we only need the products of $X$ and other matrices, for example, in obtaining the optimal control \cref{eq:opt-control}.
		The setting $r<n$ makes multiplication with $X_\star$'s factor save time and space.
		This is also considered in many literatures, e.g.,~\cite{bennerBKS2020numerical}.
\end{enumerate}

\paragraph{Time complexity}
Complexity for $S$, the factor of $X_t$:
\begin{enumerate}
	\item Step~1, compute $V_{t-1}A$, namely $CA,CA^2,\dots,CA^{t-1}$, in $(t-1)(2n-1)nl$ flops. 
	\item Step~2, compute $V_{t-1}B$ in $(t-1)(2n-1)lm$ flops.
	\item Step~3, use $M$-step PCG (suppose one-step PCG is done in $Ntl\ln l$ flops for fast multiplication where $N$ is a constant), to compute $Q_{*,*}$, in $\OO(MNtl[\ln^2t+\ln(tl)](l+m))$ flops.
	\item Step~4, compute $W$ in $(2tl-1)\frac{m(m+1)}{2}+m$ flops, and $L_Q,L_W,\wtd D_Q,\wtd D_W$ in $\frac{2}{3}l(l-1)(l+4)$ flops.
	\item Step~5, compute $\begin{bmatrix}
				Q_2\\Q_1\\
				\end{bmatrix}L_Q^{-\T},\begin{bmatrix}
				Q_3\\0\\
			\end{bmatrix}L_W^{-\T}$ in $tl^3+(t-1)lm^2$ flops; compute $S_1,S_2$ in $\OO(2Nntl\ln l)$ flops.
	\item To sum up, assuming $l\ll n, m\ll n$ and omitting lower order terms, the total complexity is $2ln^2t+2lmnt+\OO(MNl(l+m)t\ln^2t+2Nltn\ln l)=\OO\left(t(n^2+\ln^2t)\right)$ flops.
	\item Suppose $A$ is sparse, and the number of nonzero entries is $\nnz(A)$.
		Only Step~1 is different, and the total complexity is $\OO(lt\nnz(A)+lmnt+MNl(l+m)t\ln^2t+2Nltn\ln l)=\OO\left(t(\nnz(A)+n+\ln^2t)\right)$ flops.
\end{enumerate}
\paragraph{Space complexity}
\begin{enumerate}
	\item Step~1, store $V_{t-1}A$ in $(t-1)nl$ units.
	\item Step~2, store $V_{t-1}B$ in $(t-1)lm$ units.
	\item Step~3, store $Q_{*,*}$ in $(t-1)l(l+m)$ units.
	\item Step~4, store $W$ and then $L_W,L_Q$ in part of the storage for $V_{t-1}B$. (The storage is enough and no extra units are needed because the three matrices need in total $m(m+1)/2+l(l+1)/2$ units, which is less than $(t-1)lm$.)
	\item Step~5, store $S_2$ in the storage for $V_{t-1}B$ and additional storage, consuming in total $(t-1)ln$ units; 
		store $S_1$ in the storage for $V_{t-1}A$. 
	\item to sum up, the total storage is $(t-1)l(2n+l+m)=\OO(tn)$ units.
\end{enumerate}

\subsection{Arbitrary initial term}\label{ssec:arb-term}
In this subsection, we consider \cref{eq:ddre} with an arbitrary initial $X_0=\Gamma^{\T}\Gamma\succeq0$ with $\Gamma\in \mathbb{R}^{\wtd l\times n}$: 
\begin{equation}\label{eq:ddre:arb}
	X_0=\Gamma^{\T}\Gamma,\qquad X_{t+1}=\op D(X_t)=C^{\T}C+A^{\T}X_t(I+BB^{\T}X_t)^{-1}A.
\end{equation}
Note that \cref{eq:ddre:arb} is the same iteration as \cref{eq:ddre} with a different initial matrix.
\begin{theorem}\label{thm:dare:arb}
	Write
	\[
		\Upsilon_t = \begin{bmatrix}
			A^{t-1}B&\cdots &AB&B
		\end{bmatrix}.
	\]
	Then the sequence $\set{X_t}$ generated by \cref{eq:ddre:arb} is given by
	\begin{equation}\label{eq:noniter:X:arb}
		X_t=\begin{bmatrix}
			V_t\\ \Gamma A^t
			\end{bmatrix}^{\T}\left(I+\begin{bmatrix}
				T_t \\ \Gamma \Upsilon_t
		\end{bmatrix}\begin{bmatrix}
				T_t \\ \Gamma \Upsilon_t
	\end{bmatrix}^{\T}\right)^{-1}\begin{bmatrix}
			V_t\\ \Gamma A^t
		\end{bmatrix}
		,
	\end{equation}
	where $V_t,T_t$ is defined by \cref{eq:noniter:X:UVT}.
\end{theorem}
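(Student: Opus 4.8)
The plan is to prove \cref{eq:noniter:X:arb} by induction on $t$, following verbatim the argument in the proof of \cref{thm:fixedpoint} while carrying along the extra block of rows produced by $\Gamma$. I would abbreviate $W_t=\begin{bmatrix}V_t\\\Gamma A^t\end{bmatrix}$ and $M_t=\begin{bmatrix}T_t\\\Gamma\Upsilon_t\end{bmatrix}$, so that the claim reads $X_t=W_t^{\T}(I+M_tM_t^{\T})^{-1}W_t$; since $I+M_tM_t^{\T}\succeq I$, invertibility is automatic and needs no discussion. For the base case $t=0$ the matrices $V_0,T_0,\Upsilon_0$ are void, so $W_0=\Gamma$, $M_0M_0^{\T}=0$, and the right-hand side is $\Gamma^{\T}\Gamma=X_0$. (Alternatively one may verify $t=1$ directly: the formula gives $C^{\T}C+A^{\T}\Gamma^{\T}(I+\Gamma BB^{\T}\Gamma^{\T})^{-1}\Gamma A$, which equals $\op D(\Gamma^{\T}\Gamma)$ after one push-through.)

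For the inductive step I would assume $X_t=W_t^{\T}(I+M_tM_t^{\T})^{-1}W_t$ and substitute into \cref{eq:ddre:arb}. The identical chain of manipulations used in \cref{thm:fixedpoint} --- apply \cref{eq:easy} to move $W_t$ to the right of the resolvent, then merge the two inverses and absorb $W_tBB^{\T}W_t^{\T}$ into the middle factor --- gives
\begin{equation*}
	X_{t+1}=\begin{bmatrix}C\\W_tA\end{bmatrix}^{\T}\left(I+\begin{bmatrix}0&0\\W_tB&M_t\end{bmatrix}\begin{bmatrix}0&0\\W_tB&M_t\end{bmatrix}^{\T}\right)^{-1}\begin{bmatrix}C\\W_tA\end{bmatrix}.
\end{equation*}
It then remains to identify the two stacked matrices with $W_{t+1}$ and $M_{t+1}$. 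From $W_tA=\begin{bmatrix}V_tA\\\Gamma A^{t+1}\end{bmatrix}$ and $\begin{bmatrix}C\\V_tA\end{bmatrix}=V_{t+1}$ one reads off $\begin{bmatrix}C\\W_tA\end{bmatrix}=W_{t+1}$; from $W_tB=\begin{bmatrix}V_tB\\\Gamma A^tB\end{bmatrix}$ the leading $l(t+1)$ rows of the generator matrix become $\begin{bmatrix}0&0\\V_tB&T_t\end{bmatrix}=T_{t+1}$ (the block-Toeplitz recursion already recorded in \cref{ssec:efficient-method}) and the remaining rows become $\begin{bmatrix}\Gamma A^tB&\Gamma\Upsilon_t\end{bmatrix}=\Gamma\begin{bmatrix}A^tB&\Upsilon_t\end{bmatrix}=\Gamma\Upsilon_{t+1}$, so the generator matrix equals $M_{t+1}$. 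Hence $X_{t+1}=W_{t+1}^{\T}(I+M_{t+1}M_{t+1}^{\T})^{-1}W_{t+1}$, which closes the induction.

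No analytic estimate is involved, so the only delicate point is the block bookkeeping in the last step: one must keep the $\Gamma$-rows of $W_t$ and $M_t$ aligned under the horizontal concatenation $\begin{bmatrix}W_tB&M_t\end{bmatrix}$, make sure the freshly created zero row sits on top so that the first $l(t+1)$ rows reassemble into exactly $T_{t+1}$ under the conventions fixed in \cref{eq:noniter:X:UVT}, and check that prepending $A^tB$ to $\Upsilon_t$ gives $\Upsilon_{t+1}$ with the column order declared in the statement. With those indexing conventions spelled out, each identification is immediate, exactly as in \cref{thm:fixedpoint}.
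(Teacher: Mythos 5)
Your proposal is correct and follows essentially the same route as the paper: an induction that verifies the initial term (the paper checks $t=1$ exactly as in your parenthetical remark) and then repeats the chain of manipulations from \cref{thm:fixedpoint} with the extra $\Gamma$-rows carried along, identifying $\begin{bmatrix}C\\ W_tA\end{bmatrix}$ with $W_{t+1}$ and $\begin{bmatrix}0&0\\ W_tB&M_t\end{bmatrix}$ with $M_{t+1}$. The block bookkeeping you flag is handled in the paper in precisely the way you describe, so no further comment is needed.
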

\begin{remark}\label{rk:thm:form-of-solution:dare:arb}
	Note that \cref{eq:noniter:X:arb} coincides with \cref{eq:noniter:X:X} at $\Gamma=0$.
\end{remark}
\begin{proof}
	First examine $X_1$.
	\begin{align*}
		\begin{bmatrix}
			C\\ \Gamma A
			\end{bmatrix}^{\T}\left(I+\begin{bmatrix}
				0 \\ \Gamma B 
				\end{bmatrix}\begin{bmatrix}
				0 \\ \Gamma B
			\end{bmatrix}^{\T}\right)^{-1}\begin{bmatrix}
			C\\ \Gamma A
		\end{bmatrix}
		&=C^{\T}C+A^{\T}\Gamma^{\T}(I+\Gamma BB^{\T}\Gamma^{\T})^{-1}\Gamma A
		\\&=C^{\T}C+A^{\T}\Gamma^{\T}\Gamma(I+ BB^{\T}\Gamma^{\T}\Gamma)^{-1} A
		=X_1.
	\end{align*}
	Then examine the recursion.
	\begin{align*}
		\MoveEqLeft[1]
		C^{\T}C+A^{\T}X_t(I+BB^{\T}X_t)^{-1}A
		\\&=
		C^{\T}C+A^{\T}\begin{bmatrix}
			V_t\\ \Gamma A^t
			\end{bmatrix}^{\T}\left(I+\begin{bmatrix}
				T_t \\ \Gamma \Upsilon_t 
		\end{bmatrix}\begin{bmatrix}
				T_t \\ \Gamma \Upsilon_t 
	\end{bmatrix}^{\T}\right)^{-1}\begin{bmatrix}
			V_t\\ \Gamma A^t
		\end{bmatrix}
		\left(I+BB^{\T}\begin{bmatrix}
			V_t\\ \Gamma A^t
			\end{bmatrix}^{\T}\left(I+\begin{bmatrix}
				T_t \\ \Gamma \Upsilon_t 
		\end{bmatrix}\begin{bmatrix}
				T_t \\ \Gamma \Upsilon_t 
	\end{bmatrix}^{\T}\right)^{-1}\begin{bmatrix}
			V_t\\ \Gamma A^t
	\end{bmatrix}  \right)^{-1}A
	\\&\clue{\cref{eq:easy}}{=}
		C^{\T}C+A^{\T}\begin{bmatrix}
			V_t\\ \Gamma A^t
			\end{bmatrix}^{\T}\left(I+\begin{bmatrix}
				T_t \\ \Gamma \Upsilon_t 
		\end{bmatrix}\begin{bmatrix}
				T_t \\ \Gamma \Upsilon_t 
	\end{bmatrix}^{\T}+\begin{bmatrix}
			V_t\\ \Gamma A^t
		\end{bmatrix}BB^{\T}\begin{bmatrix}
			V_t\\ \Gamma A^t
			\end{bmatrix}^{\T}  \right)^{-1}\begin{bmatrix}
			V_t\\ \Gamma A^t
	\end{bmatrix}A
	\\&=\begin{bmatrix}
		C\\ V_tA\\ \Gamma A^{t+1}
			\end{bmatrix}^{\T}\begin{bmatrix}
			I & \\ &
			I+\begin{bmatrix}
				T_t \\ \Gamma \Upsilon_t 
		\end{bmatrix}\begin{bmatrix}
				T_t \\ \Gamma \Upsilon_t 
	\end{bmatrix}^{\T}+\begin{bmatrix}
			V_tB\\ \Gamma A^tB
		\end{bmatrix}\begin{bmatrix}
			V_tB\\ \Gamma A^tB
			\end{bmatrix}^{\T}  \end{bmatrix}^{-1}\begin{bmatrix}
			C\\V_tA\\ \Gamma A^{t+1}
	\end{bmatrix}
	\\&=\begin{bmatrix}
		V_{t+1}\\ \Gamma A^{t+1}
		\end{bmatrix}^{\T}\left(I+ \begin{bmatrix}
			0 & 0 \\V_tB &T_t\\ \Gamma A^tB&\Gamma \Upsilon_t 
		\end{bmatrix}\begin{bmatrix}
				0 & 0 \\V_tB &T_t\\ \Gamma A^tB&\Gamma \Upsilon_t 
	\end{bmatrix}^{\T} \right)^{-1}\begin{bmatrix}
		V_{t+1}\\ \Gamma A^{t+1}
	\end{bmatrix}
	\\&=\begin{bmatrix}
		V_{t+1}\\ \Gamma A^{t+1}
		\end{bmatrix}^{\T}\left(I+ \begin{bmatrix}
			T_{t+1}\\ \Gamma  \Upsilon_{t+1} 
		\end{bmatrix}\begin{bmatrix}
			T_{t+1}\\ \Gamma  \Upsilon_{t+1} 
	\end{bmatrix}^{\T} \right)^{-1}\begin{bmatrix}
		V_{t+1}\\ \Gamma A^{t+1}
	\end{bmatrix}
	\\&=X_{t+1}
	.
	\qedhere
	\end{align*}
\end{proof}
\begin{theorem}\label{thm:form-of-solution:dare:arb}
	Let $Q_1,Q_2,Q_3,Q_4,W,\Xi_1,\Xi_2$ as in \cref{thm:form-of-solution:dare}.
	Then the sequence $X_t$ defined by \cref{eq:noniter:X:arb} can be generated by
	\begin{equation*}\label{eq:Xt=SDS:arb}
		X_t= 
			\begin{bmatrix}
				C \\ \Xi_1 \\  \Xi_2\\ \Xi_\Gamma
			\end{bmatrix}^{\T}\begin{bmatrix}
			I_{l} &&\\& (I_{t-1}\otimes Q_1)^{-1}&\\&&(I_{t-1}\otimes W)^{-1}\\&&& W_\Gamma^{-1}
			\end{bmatrix}\begin{bmatrix}
				C \\ \Xi_1 \\  \Xi_2\\ \Xi_\Gamma
			\end{bmatrix},
	\end{equation*}
	where
	\begin{align*}
		W_\Gamma &= I_{\wtd l}+\Gamma \Upsilon_t \Upsilon_t ^{\T}\Gamma ^{\T}- \Xi_{1,\Gamma}^{\T}(I\otimes Q_1)^{-1}\Xi_{1,\Gamma} -\Xi_{2,\Gamma}^{\T}(I\otimes W)^{-1}\Xi_{2,\Gamma},
		\\
		\Xi_\Gamma &= \Gamma A^t-\Xi_{1,\Gamma}^{\T}(I\otimes Q_1)^{-1}\Xi_1 -\Xi_{2,\Gamma}^{\T}(I\otimes W)^{-1}\Xi_2\in \mathbb{R}^{\wtd l\times n},
	\end{align*}
	and
	\begin{align*}
		\Xi_{1,\Gamma} &= 
		\toepU_{l\times l}\left(\begin{bmatrix}
						Q_2\\Q_1\\
				\end{bmatrix}\right)^{\T}\toepL_{l\times m}( V_{t-1}B) \Upsilon_{t-1}^{\T}A^{\T}\Gamma^{\T}\in \mathbb{R}^{(t-1)l\times \wtd l}
				,\qquad
				\\
				\Xi_{2,\Gamma} &= 
		\toepU_{l\times m}\left(\begin{bmatrix}
						Q_3\\0\\
				\end{bmatrix}\right)^{\T}\toepL_{l\times m}( V_{t-1}B) \Upsilon_{t-1}^{\T}A^{\T}\Gamma^{\T}\in \mathbb{R}^{(t-1)m\times \wtd l}
				.
	\end{align*}
\end{theorem}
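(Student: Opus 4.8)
The plan is to obtain the claimed four–block form from the square–root expression \cref{eq:noniter:X:arb} of \cref{thm:dare:arb} by a single block Schur–complement step, reusing \cref{thm:form-of-solution:dare} for the ``$\Gamma=0$ part''. First I would write
\[
	M:=I+\begin{bmatrix}T_t\\\Gamma\Upsilon_t\end{bmatrix}\begin{bmatrix}T_t\\\Gamma\Upsilon_t\end{bmatrix}^{\T}
	=\begin{bmatrix}N & T_t\Upsilon_t^{\T}\Gamma^{\T}\\ \Gamma\Upsilon_t T_t^{\T} & I+\Gamma\Upsilon_t\Upsilon_t^{\T}\Gamma^{\T}\end{bmatrix},\qquad N:=I+T_tT_t^{\T},
\]
so that $X_t=\begin{bmatrix}V_t\\\Gamma A^t\end{bmatrix}^{\T}M^{-1}\begin{bmatrix}V_t\\\Gamma A^t\end{bmatrix}$. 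Since $M\succeq I\succ0$, both $N$ and the Schur complement
\[
	S:=I+\Gamma\Upsilon_t\Upsilon_t^{\T}\Gamma^{\T}-\Gamma\Upsilon_t T_t^{\T}N^{-1}T_t\Upsilon_t^{\T}\Gamma^{\T}
\]
are invertible (indeed positive definite), and the block $\mathrm{LDL}^{\T}$ congruence of $M^{-1}$ gives at once
\[
	X_t=V_t^{\T}N^{-1}V_t+\Xi_\Gamma^{\T}S^{-1}\Xi_\Gamma,\qquad \Xi_\Gamma:=\Gamma A^t-\Gamma\Upsilon_t T_t^{\T}N^{-1}V_t.
\]
It then suffices to rewrite the three pieces $V_t^{\T}N^{-1}V_t$, $\Gamma\Upsilon_t T_t^{\T}N^{-1}V_t$ and $\Gamma\Upsilon_t T_t^{\T}N^{-1}T_t\Upsilon_t^{\T}\Gamma^{\T}$ in terms of the quantities appearing in the statement.

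For the first one, the block splitting $T_t=\begin{bmatrix}0&0\\ V_{t-1}B & T_{t-1}\end{bmatrix}$ yields $N=\diag\bigl(I_l,\ I+\toepL_{l\times m}(V_{t-1}B)\toepL_{l\times m}(V_{t-1}B)^{\T}\bigr)$, hence $N^{-1}=\diag(I_l,K^{-1})$ with $K^{-1}$ exactly the matrix computed in \cref{eq:I+LLT:inv}; and since $V_t=\begin{bmatrix}C\\ V_{t-1}A\end{bmatrix}$, the product $V_t^{\T}N^{-1}V_t$ coincides with the right–hand side of \cref{eq:Xt=CTC+A}, so \cref{thm:form-of-solution:dare} identifies it with $\begin{bmatrix}C\\\Xi_1\\\Xi_2\end{bmatrix}^{\T}\diag\bigl(I,(I\otimes Q_{2,b})^{-1},(I\otimes W)^{-1}\bigr)\begin{bmatrix}C\\\Xi_1\\\Xi_2\end{bmatrix}$. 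This already supplies the first three diagonal blocks of the asserted formula.

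The remaining step, and the one I expect to be the main obstacle, is the block identity
\[
	T_t\Upsilon_t^{\T}=\begin{bmatrix}0\\ \toepL_{l\times m}(V_{t-1}B)\,\Upsilon_{t-1}^{\T}A^{\T}\end{bmatrix},
\]
which I would prove from $T_t=\begin{bmatrix}0&0\\ V_{t-1}B & T_{t-1}\end{bmatrix}$ and $\Upsilon_t=\begin{bmatrix}A\Upsilon_{t-1}& B\end{bmatrix}$ by reindexing the block Toeplitz sums, so that the contribution $V_{t-1}BB^{\T}(A^{t-1})^{\T}$ of the first block column merges with $T_{t-1}\Upsilon_{t-1}^{\T}$ into $\toepL_{l\times m}(V_{t-1}B)\Upsilon_{t-1}^{\T}A^{\T}$. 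Granting this, pre/post–multiplying by $N^{-1}=\diag(I,K^{-1})$, expanding $K^{-1}$ via \cref{eq:I+LLT:inv}, and matching against the definitions of $\Xi_1,\Xi_2,\Xi_{1,\Gamma},\Xi_{2,\Gamma}$ turns the cross term into
\[
	\Gamma\Upsilon_t T_t^{\T}N^{-1}V_t=\Xi_{1,\Gamma}^{\T}(I\otimes Q_{2,b})^{-1}\Xi_1+\Xi_{2,\Gamma}^{\T}(I\otimes W)^{-1}\Xi_2
\]
and the quadratic term into
\[
	\Gamma\Upsilon_t T_t^{\T}N^{-1}T_t\Upsilon_t^{\T}\Gamma^{\T}=\Xi_{1,\Gamma}^{\T}(I\otimes Q_{2,b})^{-1}\Xi_{1,\Gamma}+\Xi_{2,\Gamma}^{\T}(I\otimes W)^{-1}\Xi_{2,\Gamma},
\]
where I use that $Q_{2,b}$ and $W=I-(V_{t-1}B)^{\T}K^{-1}V_{t-1}B$ are symmetric, being blocks of the symmetric matrix $K^{-1}$. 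Substituting these two identities into the expressions for $\Xi_\Gamma$ and $S$ above gives precisely $\Xi_\Gamma=\Gamma A^t-\Xi_{1,\Gamma}^{\T}(I\otimes Q_{2,b})^{-1}\Xi_1-\Xi_{2,\Gamma}^{\T}(I\otimes W)^{-1}\Xi_2$ and $S=W_\Gamma$; collecting $V_t^{\T}N^{-1}V_t$ and $\Xi_\Gamma^{\T}W_\Gamma^{-1}\Xi_\Gamma$ into one congruence with the stated $4$–fold block–diagonal middle factor then finishes the argument. Everything else is routine bookkeeping: tracking the block sizes of $Q_{2,c}$ and $Q_{3,c}$ and keeping the $\toepU/\toepL$ transposition conventions of \cref{sec:preliminary} consistent.
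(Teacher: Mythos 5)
Your proposal is correct and follows essentially the same route as the paper: a block Schur-complement (LDL$^{\T}$) factorization of the $2\times2$ block matrix in \cref{eq:noniter:X:arb}, reuse of \cref{thm:form-of-solution:dare} via \cref{eq:Xt=CTC+A,eq:I+LLT:inv} for the $\Gamma$-free part, and the identity $T_t\Upsilon_t^{\T}=\begin{bmatrix}0\\ \toepL_{l\times m}(V_{t-1}B)\,\Upsilon_{t-1}^{\T}A^{\T}\end{bmatrix}$ (which checks out by the reindexing you describe) to convert the cross and quadratic terms into $\Xi_{1,\Gamma},\Xi_{2,\Gamma}$. You even make explicit two points the paper leaves implicit, namely the invertibility of the Schur complement and the block identity itself.
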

\begin{proof}
	By \cref{eq:noniter:X:arb},
	\begin{align*}
		X_t
		&=\begin{bmatrix}
			V_t\\ \Gamma A^t
			\end{bmatrix}^{\T}\begin{bmatrix}
				I+T_tT_t^{\T} & T_t\Upsilon_t^{\T}\Gamma^{\T} \\ \Gamma \Upsilon_tT_t^{\T} & I+\Gamma \Upsilon_t\Upsilon_t^{\T}\Gamma^{\T}
		\end{bmatrix}^{-1}\begin{bmatrix}
			V_t\\ \Gamma A^t
		\end{bmatrix}
	\\&=
		(*)^{\T}
		\begin{bmatrix}
			I+T_t T_t ^{\T} & \\ & I+\Gamma \Upsilon_t \Upsilon_t ^{\T}\Gamma ^{\T}-\Gamma \Upsilon_t T_t ^{\T} (I+T_t T_t ^{\T})^{-1}T_t \Upsilon_t ^{\T}\Gamma ^{\T} 
		\end{bmatrix}^{-1}
		\begin{bmatrix}
		I & \\ -\Gamma \Upsilon_t T_t ^{\T}(I+T_t T_t ^{\T})^{-1} &I
		\end{bmatrix}\begin{bmatrix}
		V_t  \\ \Gamma A^t
		\end{bmatrix}
	\\&=
	\begin{bmatrix}
	V_t  \\ \Xi_\Gamma 
	\end{bmatrix}^{\T}\begin{bmatrix}
			I+T_t T_t ^{\T} & \\ & I+\Gamma \Upsilon_t \Upsilon_t ^{\T}\Gamma ^{\T}-\Gamma \Upsilon_t T_t ^{\T} (I+T_t T_t ^{\T})^{-1}T_t \Upsilon_t ^{\T}\Gamma ^{\T} 
	\end{bmatrix}^{-1}\begin{bmatrix}
	V_t  \\ \Xi_\Gamma 
	\end{bmatrix}
	\\&= V_t^{\T}(I+T_t T_t^{\T} )^{-1}V_t + \Xi_\Gamma ^{\T}\left(I+\Gamma \Upsilon_t \Upsilon_t ^{\T}\Gamma ^{\T}-\Gamma \Upsilon_t T_t ^{\T} (I+T_t T_t ^{\T})^{-1}T_t \Upsilon_t ^{\T}\Gamma ^{\T} \right)^{-1}\Xi_\Gamma 
	,
\end{align*}
	where $\Xi_\Gamma =\Gamma A^t-\Gamma \Upsilon_t T_t ^{\T}(I+T_t T_t ^{\T})^{-1}V_t $ and $*$ is used to indicate the same part limited by the symmetry.
	Similarly to \cref{eq:Xt=CTC+A},
	by \cref{eq:I+LLT:inv}, 
	writing $\toepL=\toepL_{l\times m}( V_{t-1}B)$, it can be simplified to
	\begin{align*}
		X_t
		&=
		\begin{multlined}[t]
			C^{\T}C+ A^{\T}V_{t-1}^{\T} \left(I+\toepL\toepL^{\T}\right)^{-1}V_{t-1}A
			\\ +\Xi_\Gamma ^{\T}\left(I+\Gamma \Upsilon_t \Upsilon_t ^{\T}\Gamma ^{\T}- \Gamma A\Upsilon_{t-1}\toepL ^{\T}(I+\toepL\toepL^{\T})^{-1}\toepL \Upsilon_{t-1}^{\T}A^{\T}\Gamma^{\T}\right)^{-1}\Xi_\Gamma 
		\end{multlined}
		\\&=
		\begin{multlined}[t]
			C^{\T}C+ \Xi_1^{\T}(I\otimes Q_1)^{-1}\Xi_1 +\Xi_2^{\T}(I\otimes W)^{-1}\Xi_2
		\\	+\Xi_\Gamma ^{\T}\left(I+\Gamma \Upsilon_t \Upsilon_t ^{\T}\Gamma ^{\T}- \Xi_{1,\Gamma}^{\T}(I\otimes Q_1)^{-1}\Xi_{1,\Gamma} -\Xi_{2,\Gamma}^{\T}(I\otimes W)^{-1}\Xi_{2,\Gamma}\right)^{-1}\Xi_\Gamma 
		,
\end{multlined}
	\end{align*}
	where 
	\begin{align*}
		\Xi_\Gamma 
		&=\Gamma A^t-\Gamma A\Upsilon_{t-1}\toepL ^{\T}(I+\toepL\toepL^{\T})^{-1}V_{t-1}A
	=\Gamma A^t-\Xi_{1,\Gamma}^{\T}(I\otimes Q_1)^{-1}\Xi_1 -\Xi_{2,\Gamma}^{\T}(I\otimes W)^{-1}\Xi_2
		.
		\qedhere
	\end{align*}
\end{proof}
Note that the product of two lower triangular block-Toeplitz matrices is still a lower triangular block-Toeplitz matrix.
Hence writing $\Xi_1=\begin{bmatrix}
	\Xi_{1,c}\\\Xi_{1,b}
\end{bmatrix}$ where $\Xi_{1,b}\in \mathbb{R}^{l\times n}$,
\begin{align*}
	\toepU_{l\times l}\left(\begin{bmatrix}
			Q_2\\Q_1\\
	\end{bmatrix}\right)^{\T}\toepL_{l\times m}( V_{t-1}B)
	&=\toepU_{l\times l}\left(\begin{bmatrix}
			Q_2\\Q_1\\
			\end{bmatrix}\right)^{\T}\left[I_{t-1}\otimes (CB)+\toepL_{l\times m}\left( \begin{bmatrix}
		0\\ V_{t-2}A
	\end{bmatrix}\right)I_{t-1}\otimes B\right]
	\\&=\toepU_{l\times l}\left(\begin{bmatrix}
			Q_2\\Q_1\\
			\end{bmatrix}\right)^{\T}I_{t-1}\otimes (CB)+\toepL_{l\times n}\left(\begin{bmatrix}
	0\\\Xi_{1,c}
\end{bmatrix}\right)I_{t-1}\otimes B,
\\&=\toepU_{l\times l}\left((CB)^{\T}\begin{bmatrix}
			Q_2\\Q_1\\
	\end{bmatrix}\right)^{\T}+\toepL_{l\times m}\left(\begin{bmatrix}
	0\\\Xi_{1,c}
\end{bmatrix}B\right).
\end{align*}
Similarly, writing $\Xi_2=\begin{bmatrix}
	\Xi_{2,c}\\\Xi_{2,b}
\end{bmatrix}$ where $\Xi_{2,b}\in \mathbb{R}^{l\times n}$,
\begin{align*}
	\toepU_{l\times l}\left(\begin{bmatrix}
			Q_3\\0\\
	\end{bmatrix}\right)^{\T}\toepL_{l\times m}( V_{t-1}B)
	= \toepU_{l\times l}\left((CB)^{\T}\begin{bmatrix}
				Q_3\\0\\
		\end{bmatrix}\right)^{\T}+\toepL_{l\times m}\left(\begin{bmatrix}
		0\\\Xi_{2,c}
	\end{bmatrix}B\right)
	.
\end{align*}
These can be used to reduce calculations for $\Xi_{1,\Gamma}$ and $\Xi_{2,\Gamma}$.

Which choice of the initial matrix $\Gamma$ makes the iteration \cref{eq:ddre:arb} converge?
\Cref{lm:ddre:attractor} gives an easy sufficient condition, which 
can be immediately applied to \cref{alg:fna-d}, and used to deal with the situation where $t$ must keep small, as is declared in the illustration on the parameter and output for \cref{alg:fna-d} in \cref{ssec:efficient-method}.
\begin{lemma}\label{lm:ddre:attractor}
	The unique positive semi-definite d-stabilizing solution $X_\star$ of the DARE \cref{eq:dare} is an attractor (i.e., asymptotically stable fixed point) of the DRE \cref{eq:ddre} or \cref{eq:ddre:arb}.
	Moreover, any symmetric matrix $X$ satisfying one of the two following condition lies in its attraction basin:
	\begin{enumerate}
		\item  $0\preceq X\preceq X_\star$;
		\item $\N{X-X_\star}\le \frac{1-\eta\N{A_{X_\star}}^2}{\N{B(I+B^TX_\star B)^{-1}B^T}}$ for some $\eta\in[0,1)$ and some norm $\N{\cdot}$ satisfying $\N{I}=1,\N{A_{X_\star}}<1$.
	\end{enumerate}
	As a result, \cref{eq:ddre:arb} with the matrix above as its initial term converges to $X_\star$.
\end{lemma}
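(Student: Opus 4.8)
The plan is to run everything off one exact perturbation identity for the Riccati operator $\op D$. Using the push‑through form of \cref{eq:easy}, namely $X(I+BB^{\T}X)^{-1}=(I+XBB^{\T})^{-1}X$, one checks that for any symmetric $X,Y$ with $I+BB^{\T}X$ and $I+BB^{\T}Y$ nonsingular,
\[
	\op D(X)-\op D(Y)=A^{\T}(I+YBB^{\T})^{-1}(X-Y)(I+BB^{\T}X)^{-1}A=A_Y^{\T}(X-Y)A_X,\qquad A_Z:=(I+BB^{\T}Z)^{-1}A,
\]
because the middle block $(I+YBB^{\T})X-Y(I+BB^{\T}X)$ collapses to $X-Y$. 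Specializing $Y=X_\star$ gives the workhorse $\op D(X)-X_\star=A_{X_\star}^{\T}(X-X_\star)A_X$. From this, $\op D$ is rational — hence smooth — on the open set where $I+BB^{\T}X$ is invertible, and since $A_{X_\star+H}=(I+B(I+B^{\T}X_\star B)^{-1}B^{\T}H)^{-1}A_{X_\star}=A_{X_\star}+\OO(\N{H})$ by \cref{eq:easy}, the derivative at $X_\star$ is $H\mapsto A_{X_\star}^{\T}HA_{X_\star}$; this linear map on symmetric matrices has spectral radius $\rho(A_{X_\star})^2<1$ because $A_{X_\star}$ is d‑stable, so $X_\star$ is a locally asymptotically stable fixed point, i.e.\ an attractor. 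That settles the first assertion.

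For case (a), first note $\op D$ maps the positive semi‑definite cone into itself (because $X(I+BB^{\T}X)^{-1}=X^{1/2}(I+X^{1/2}BB^{\T}X^{1/2})^{-1}X^{1/2}\succeq0$), so $X_t\succeq0$ and $I+BB^{\T}X_t\succeq I$ for every $t$ and the iteration is well defined. Let $\{Y_t\}$ be generated by \cref{eq:ddre} with $Y_0=0$; by \cref{thm:fixedpoint}, $Y_t\to X_\star$. From $0\preceq X_0=X\preceq X_\star$, $\op D(0)=C^{\T}C\succeq0$, $\op D(X_\star)=X_\star$, and the monotonicity of $\op D$ (\cref{lm:ddre:monotonic}), an easy induction gives $Y_t\preceq X_t\preceq X_\star$ for all $t$; squeezing yields $X_t\to X_\star$, so $X$ lies in the attraction basin.

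For case (b), iterate the workhorse identity to obtain $E_t:=X_t-X_\star=(A_{X_\star}^{\T})^{t}E_0\,A_{X_0}A_{X_1}\cdots A_{X_{t-1}}$. Write $A_{X_s}=(I+M_s)^{-1}A_{X_\star}$ with $M_s:=B(I+B^{\T}X_\star B)^{-1}B^{\T}E_s$, so that $I+BB^{\T}X_s=(I+BB^{\T}X_\star)(I+M_s)$ and, since $\rho(M_s)=\rho\bigl((I+B^{\T}X_\star B)^{-1}B^{\T}E_sB\bigr)$ and $\N{(I+B^{\T}X_\star B)^{-1}}\le1$, we get $\rho(M_s)\le\N{B^{\T}E_sB}=\rho(B^{\T}E_sB)$. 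Thus hypothesis (b) is precisely $\rho(M_0)\le1-\rho(A_{X_\star})<1$, which already makes the first step well defined ($I+M_0$ and hence $I+BB^{\T}X_0$ invertible). The heart of the argument is an induction: fix $\varepsilon\in(0,1-\rho(A_{X_\star}))$ and an operator norm $\N{\cdot}_\ast$ adapted to $A_{X_\star}$ (induced by a discrete Lyapunov solution for $A_{X_\star}$) with $\N{A_{X_\star}}_\ast\le\rho(A_{X_\star})+\varepsilon$; show that $\N{B^{\T}E_tB}$ stays $\le1-\rho(A_{X_\star})$ for every $t$, so that every step is well defined and every $M_t$ is controlled, and then, estimating $\N{(I+M_t)^{-1}}_\ast$ through $\rho(M_t)$, that $\N{A_{X_t}}_\ast\le1$, whence the non‑autonomous product $A_{X_0}\cdots A_{X_{t-1}}$ is bounded in $\N{\cdot}_\ast$. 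Combining with $\N{(A_{X_\star}^{\T})^{t}}\to0$ gives $E_t\to0$, i.e.\ $X_t\to X_\star$. The final sentence of the lemma is then immediate, since "lying in the attraction basin'' means exactly that the DRE started there converges to $X_\star$.

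I expect the main obstacle to be case (b): the product $A_{X_0}\cdots A_{X_{t-1}}$ is non‑autonomous and its factors can individually have operator norm exceeding $1$, so one must commit to the right $A_{X_\star}$‑adapted norm and carefully close the induction that keeps $\N{B^{\T}E_tB}\le1-\rho(A_{X_\star})$ — delicate because $M_t$ depends on all of $B^{\T}E_t$, not merely on $B^{\T}E_tB$, so the scalar bound $\rho(M_t)\le\N{B^{\T}E_tB}$ must be leveraged together with a matched bound on $\N{(I+M_t)^{-1}A_{X_\star}}_\ast$. This is exactly the point at which the precise threshold $1-\rho(A_{X_\star})$ in (b) is consumed.
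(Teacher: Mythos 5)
Your attractor argument and your case (a) are correct and essentially the paper's own proof: the paper computes $\diff\op D(X)=A_X^{\T}\diff X\,A_X$ directly and observes $\rho(A_{X_\star})<1$, and handles (a) by the same monotonicity squeeze $X_\star=\op D^t(X_\star)\succeq\op D^t(X)\succeq\op D^t(0)\to X_\star$; your exact identity $\op D(X)-\op D(Y)=A_Y^{\T}(X-Y)A_X$ is a clean way to package the same mechanism.

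The genuine gap is case (b), which in your proposal is announced rather than proved. The paper disposes of (b) with a one-step estimate: writing $\Delta=X-X_\star$ and $I+BB^{\T}X=(I+BB^{\T}X_\star)\bigl[I+(I+BB^{\T}X_\star)^{-1}BB^{\T}\Delta\bigr]$, it bounds, in a norm adapted to $A_{X_\star}$, $\N{A_X}\le\N{A_{X_\star}}/(1-\N{BB^{\T}\Delta})<1$, i.e.\ the hypothesis is consumed in making the Fr\'echet derivative contractive. You instead iterate your identity to $E_t=(A_{X_\star}^{\T})^tE_0A_{X_0}\cdots A_{X_{t-1}}$ and then need an induction keeping $\N{B^{\T}E_tB}\le1-\rho(A_{X_\star})$ and $\N{A_{X_t}}_\ast\le1$ for all $t$; you state this as something to ``show'' and yourself flag it as the main obstacle, and the tools you name do not close it as written. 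Concretely: (i) ``estimating $\N{(I+M_t)^{-1}}_\ast$ through $\rho(M_t)$'' is not a valid step, since the spectral radius of $M_t$ does not control the norm of $(I+M_t)^{-1}$ in any fixed norm (a nilpotent $M_t$ with large norm has $\rho(M_t)=0$ while $\N{(I+M_t)^{-1}}_\ast$ can be arbitrarily large), so nothing you write yields $\N{A_{X_t}}_\ast=\N{(I+M_t)^{-1}A_{X_\star}}_\ast\le1$; (ii) the propagation of the threshold is unaddressed: $B^{\T}E_{t+1}B=B^{\T}A_{X_\star}^{\T}E_tA_{X_t}B$ is not a function of $B^{\T}E_tB$ alone, and no inequality in the proposal shows $\N{B^{\T}E_{t+1}B}\le1-\rho(A_{X_\star})$ follows from the same bound at step $t$. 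Until that induction is actually carried out (with norm, not spectral-radius, control of $M_t$), or you switch to the paper's direct bound on $\N{A_X}$, case (b) remains unproved in your write-up.
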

\begin{proof}
	First calculate the differentials.
	\begin{align*}
		\diff\op D(X)
		&=A^{\T}\diff X(I+BB^{\T}X)^{-1}A+A^{\T}X\diff\left((I+BB^{\T}X)^{-1}\right)A \\
		&=A^{\T}\diff X(I+BB^{\T}X)^{-1}A-A^{\T}X(I+BB^{\T}X)^{-1}BB^{\T}\diff X(I+BB^{\T}X)^{-1}A
		\\&=A^{\T}\left[I-X(I+BB^{\T}X)^{-1}BB^{\T}\right]\diff X(I+BB^{\T}X)^{-1}A
		\\&=A^{\T}(I+XBB^{\T})^{-1}\diff X(I+BB^{\T}X)^{-1}A
		\\&=A_X^{\T}\diff XA_X
		,
	\end{align*}
	where $A_X$ is the closed loop matrix.
	In order to avoid the appearance of 4th-order tensor, we use vectorization to obtain
	\[
		\diff\left(\vectorize\op D(X)\right)=A_X^{\T}\otimes A_X^{\T}\diff\left(\vectorize X\right)
		\quad \text{and} \quad
		\frac{\diff\left(\vectorize\op D(X)\right)}{\diff\left(\vectorize X\right)}=A_X^{\T}\otimes A_X^{\T}
		.
	\]
	Since $X_\star$ is d-stabilizing, $\rho(A_{X_\star})<1$ and thus the Fr\'echet derivative at $X_\star$ has norm less than $1$, which guarantees $X_\star$ is an attractor.

	For the first kind of matrices, by \cref{lm:ddre:monotonic}, $X_\star=\op D^t(X_\star)\succeq \op D^t(X)\succeq \op D^t(0)\to X_\star$, which forces $\op D^t(X)\to X_\star$.
	For the second one,  
	Writing $X-X_\star=\Delta $, 
	\begin{align*}
		\N{A_{X}}
		&=\N{(I+BB^{\T}X)^{-1}A}
		\\&=\N{[I+(I+BB^{\T}X_\star)^{-1}BB^{\T}\Delta]^{-1}(I+BB^{\T}X_\star)^{-1}A}
		\\&\le\N{[I+B(I+B^{\T}X_\star B)^{-1}B^{\T}\Delta]^{-1}}\N{(I+BB^{\T}X_\star)^{-1}A}
		\\&\le\frac{1}{1-\N{B(I+B^{\T}X_\star B)^{-1}B^{\T}\Delta}}\N{(I+BB^{\T}X_\star)^{-1}A}
	\\&\le\frac{\N{A_{X_\star}}}{1-\N{B(I+B^{\T}X_\star B)^{-1}B^{\T}}\N{\Delta}}
	\\&\le\frac{\eta}{\N{A_{X_\star}}}
		.
	\end{align*}
	Then we consider
	\begin{align*}
		\op D(X)-X_\star
		&=\op D(X)-\op D(X_\star)
		\\&=A^TX(I+BB^TX)^{-1}A
		-A^T(I+X_\star BB^T)^{-1}X_\star A
		\\&=A^T(I+X_\star BB^T)^{-1}(X-X_\star)(I+BB^TX)^{-1}A
		\\&=A_{X_\star}^T (X-X_\star)A_X
		,
	\end{align*}
	which implies $\N{\op D(X)-X_\star}\le\eta \N{X-X_\star}$.
	Thus, $\op D^t(X)\to X_\star$ by reasoning in the same way consecutively.
\end{proof}

According to \cref{lm:ddre:attractor}, the compression technique can be used in \cref{alg:fna-d} without breaking its convergence. 
We roughly describe the process here:
after performing \cref{alg:fna-d} for a small/mid $t$, a truncation technique (e.g., SVD/QR) is used on $S$ to produce $S'_X=\Gamma$;
then  \cref{eq:noniter:X:arb} is used to generate a new approximation; repeat this process until convergence.
To decrease the number of calculations, the same $t$ is used in each outer iteration.

\section{CARE}\label{sec:care}
Given a linear time-invariant control system in continuous-time:
\begin{align*}
	\dot x(t) & = A x(t) + B u(t),\\
	y(t)&=C x(t),
\end{align*}
where $A\in \mathbb{R}^{n\times n}, B\in \mathbb{R}^{n\times m}, C\in \mathbb{R}^{l\times n}$.
Suppose the following condition holds through out this section:
\[
	\fbox{$(A,B)$ is c-stabilizable and $(C,A)$ is c-detectable,}
\]
or equivalently, 
$\rank(\begin{bmatrix}
	A-\lambda I & BB^{\T}
\end{bmatrix})=\rank(\begin{bmatrix}
A^{\T}-\lambda I & C^{\T}C
\end{bmatrix})=n$ for any $\lambda\in \mathbb{C}\setminus \mathbb{C}_-$,
where $\mathbb{C}_-$ is the open left half complex plane.

Its linear-quadratic optimal control can be expressed as 
\begin{equation*}\label{eq:opt-control:care}
	\arg\min_{u(t)}\int_0^\infty \left[ y(t)^{\T} y(t) + u(t)^{\T} u(t) \right] \diff t
 = -B^{\T} X_\star x(t),
\end{equation*}
where $X_\star$ is the unique symmetric positive semi-definite c-stabilizing solution $X$ of the CARE~\cite{biniIM2012numerical,chuFL2005structurepreserving,lancasterR1991solutions,mehrmann1991automomous}:
\begin{equation} \label{eq:care}
	\op C(X):=A^{\T} X + X A - X BB^{\T} X + C^{\T}C = 0.
\end{equation}
Here a solution $X$ is called c-stabilizing, if the closed loop matrix $A_{X}=A-BB^{\T}X$ is c-stable,
namely all of its eigenvalues lie in the open left half complex plane $\mathbb{C}_-$.

\subsection{FTA and its equivalence to many other methods}\label{ssec:fna-and-its-equivalence-to-many-other-methods}

Many numerical methods to solve CAREs are based on performing Cayley transformation on the associated Hamiltonian matrix
\[
	\hami  = \begin{bmatrix}
		A & BB^{\T} \\ C^{\T}C & -A^{\T}
	\end{bmatrix}
	.
\]
For example,  the SDA \cref{eq:sda} for DAREs is also valid for CAREs.
If the initial terms are set by
\begin{equation*}\label{eq:initial:care}
	A_0=I+2\gamma K_\gamma^{-\T}, \qquad 
		G_0=2\gamma \what A_\gamma^{-1}BB^{\T}K_\gamma^{-1},\qquad 	
		H_0=2\gamma K_\gamma^{-1}C^{\T}C\what A_\gamma^{-1},
\end{equation*}
where $\what A_\gamma=A-\gamma I, \gamma>0$ and $K_\gamma=\what A_\gamma^{\T}+C^{\T}C\what A_\gamma^{-1}BB^{\T}$,
then $I_n + G_k H_k$ are nonsingular for $k\ge 0$, and it holds that
$A_k \to  0$, $G_k \to  Y_\star$ (the solution to the dual CARE) and $H_k \to  X_\star$, all quadratically \cite{linX2006convergence}. 
The special forms of terms $A_0, G_0, H_0$ are given by the Cayley transformation $\hami \mapsto (\hami +\gamma I)(\hami -\gamma I)^{-1}$ in order to generate a structured symplectic matrix pair, see, e.g., in \cite[Section~5.3]{huangLL2018structurepreserving}.

Since the recursion of SDAs for DAREs and CAREs are the same, comparing the initial terms, it is clear that the SDA for CAREs is calculating a subsequence generated by
this DRE:
\begin{equation}\label{eq:fixedpoint:care}
	X_0=0,\qquad X_{t+1}= H_0 + A_0^{\T}X_t\left(I+G_0X_t\right)^{-1}A_0.
\end{equation}
Write $Y_\gamma=C\what A_\gamma^{-1}B$, $B_\gamma=\sqrt{2\gamma}\what A_\gamma^{-1}B(I+Y_\gamma^{\T}Y_\gamma)^{-1/2}$, $C_\gamma=\sqrt{2\gamma}(I+Y_\gamma Y_\gamma^{\T})^{-1/2}C\what A_\gamma^{-1}$, $A_\gamma=I+2\gamma \what A_\gamma^{-1}-B_\gamma Y_\gamma^{\T}C_\gamma$,
and then
\begin{align*}
	G_0
	&=2\gamma \what A_\gamma^{-1}BB^{\T}(\what A_\gamma^{\T}+C^{\T}C\what A_\gamma^{-1}BB^{\T})^{-1}
	\\&=2\gamma \what A_\gamma^{-1}B(I +B^{\T}\what A_\gamma^{-\T}C^{\T}C\what A_\gamma^{-1}B)^{-1}B^{\T}\what A_\gamma^{-\T}
		=B_\gamma B_\gamma^{\T}
		,
\end{align*}
	and similarly
	$H_0=C_\gamma^{\T}C_\gamma,
	A_0=A_\gamma$.
	%
The DRE \cref{eq:fixedpoint:care} reads
\begin{equation}\label{eq:ddre:care}
	X_0=0,\qquad X_{t+1}= C_{\gamma}^{\T}C_{\gamma} + A_{\gamma}^{\T}X_t\left(I+B_{\gamma}B_{\gamma}^{\T}X_t\right)^{-1}A_{\gamma}.
\end{equation}
The form of \cref{eq:ddre:care} coincides with \cref{eq:ddre}. Hence the discussions on DAREs can be adopted to CAREs.

Wong and Balakrishnan \cite{wongB2005quadratic,wongB2007fast} proposed the quadratic ADI method
\begin{align*}
	X_0&=0,\\
	X_{t+1/2}(A-\ol \gamma_{t+1}I-BB^{\T}X_t)&=-C^{\T}C-(A^{\T}+\ol \gamma_{t+1}I)X_t,\qquad \Re\gamma_{t+1}>0,\\
	(A^{\T}-\gamma_{t+1}I-X_{t+1/2}BB^{\T})X_{t+1}&=-C^{\T}C-X_{t+1/2}(A+\gamma_{t+1} I),
\end{align*}
and presented in \cite[(10)]{wongB2005quadratic}
\[
	X_{t+1}=C_{\gamma_{t+1}}^{\T}C_{\gamma_{t+1}}+A_{\gamma_{t+1}}^{\T}X_t\left(I+B_{\gamma_{t+1}}B_{\gamma_{t+1}}^{\T}X_t\right)^{-1}A_{\gamma_{t+1}},
\]
which is the same as \cref{eq:ddre:care} as long as $\gamma_{t+1}=\gamma$.

Lin and Simoncini \cite{linS2015new} developed the Cayley transformed Hamiltonian subspace iteration
\begin{align*}
	\begin{bmatrix}
		M_{t+1}\\ N_{t+1}
	\end{bmatrix}
	&=(\hami -\gamma_{t+1} I)^{-1}(\hami +\ol\gamma_{t+1} I)\begin{bmatrix}
		I\\-X_t
	\end{bmatrix},
	\qquad \Re\gamma_{t+1}<0,
	\\
	X_{t+1}
	&=-N_{t+1}M_{t+1}^{-1}.
\end{align*}

Benner et al.\ \cite{bennerBKS2018radi} devoted the RADI method, originated from the incorporation technique (which we will illustrate later),
and proved that 
if the initial approximation is $0$ and the same shifts are used,
the RADI method is equivalent to quadratic ADI method and Cayley transformed Hamiltonian subspace iteration, together with invariant subspace approach in \cite{amodeiB2010invariant,bennerB2016solution} for special cases.

From the analysis above, we can conclude that the FTA in this paper and the SDA are also equivalent to these methods under the same condition that the initial approximation is $0$ and the shift $\gamma$ is consistently used, in the sense that they all produce the same sequence (subsequence for SDA).

Clearly the FTA, \cref{alg:fna-d}, can be performed on the corresponding DARE to obtain the solution of the CARE.


Rather than directly using the results in \cref{ssec:in-the-view-of-matrix-analysis}, we borrow the same analysis there and eventually obtain the following analogies of 
\cref{thm:fixedpoint,thm:form-of-solution:dare}.

\begin{theorem}\label{thm:fixedpoint:care}
	Let $\wtd A=I+2\gamma \what A_\gamma^{-1},\wtd B=\sqrt{2\gamma}\what A_\gamma^{-1}B,\wtd C=\sqrt{2\gamma}C\what A_\gamma^{-1}$.
	Write
	\begin{equation*}\label{eq:noniter:X:UVT:care}
		\wtd V_t = \begin{bmatrix}
			\wtd C \\ \wtd C\wtd A \\ \wtd C\wtd A^2\\ \vdots \\ \vdots \\ \wtd C\wtd A^{t-1}
		\end{bmatrix} 
		,\qquad
		\wtd T_t = \begin{bmatrix}
			Y_\gamma          &        &         &        &    &   \\
			\wtd C\wtd B        & Y_\gamma       &         &        &    &   \\
			\wtd C\wtd A\wtd B       & \wtd C\wtd B     & \ddots  &        &    &   \\
			\vdots    &        & \ddots  & \ddots &    &   \\
			\vdots    &        &         & \wtd C\wtd B     & Y_\gamma   &   \\
			\wtd C\wtd A^{t-2}\wtd B & \cdots & \cdots  & \wtd C\wtd A\wtd B    & \wtd C\wtd B & Y_\gamma  \\
		\end{bmatrix}
		,
	\end{equation*}
	$\wtd T_1=Y_\gamma $. 
	Then the terms of the sequence $\set{X_t}$ generated by the DRE \cref{eq:ddre:care} are
	\begin{equation}\label{eq:noniter:X:X:care}
		X_t
		= \wtd V_t^{\T}(I+\wtd T_t\wtd T_t^{\T})^{-1}\wtd V_t
		, \qquad t=1,2,\dots.
	\end{equation}
	Moreover, $\set{X_t}$ is monotonically nondecreasing, and $X_t\to X_{\star}$, the solution of CARE \cref{eq:care}.
\end{theorem}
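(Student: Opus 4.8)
The plan is to prove \cref{thm:fixedpoint:care} by directly mirroring the argument of \cref{thm:fixedpoint} applied to the DARE \cref{eq:ddre:care}, since the latter has exactly the form of \cref{eq:ddre} with $(A,B,C)$ replaced by $(\wtd A,\wtd B,\wtd C)$ \emph{except} that the Toeplitz operator $\wtd T_t$ carries the nonzero diagonal block $Y_\gamma = C\what A_\gamma^{-1}B$ rather than a zero diagonal. So the key preliminary observation is the algebraic identity $\wtd C\wtd B = 2\gamma\, C\what A_\gamma^{-1}\what A_\gamma^{-1}B$, and more importantly the recursion structure $\wtd V_t = \begin{bmatrix}\wtd C\\ \wtd V_{t-1}\wtd A\end{bmatrix}$ together with $\wtd T_t = \begin{bmatrix}Y_\gamma & 0\\ \wtd V_{t-1}\wtd B & \wtd T_{t-1}\end{bmatrix}$, where I must double-check that the off-diagonal column below $Y_\gamma$ is precisely $\wtd V_{t-1}\wtd B = \bigl[(\wtd C\wtd B)^{\T}\ (\wtd C\wtd A\wtd B)^{\T}\ \cdots\bigr]^{\T}$, which it is by inspection of the displayed $\wtd T_t$.

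Next I would set up an induction on $t$ with the ansatz $X_t = \wtd V_t^{\T}(I+\wtd T_t\wtd T_t^{\T})^{-1}\wtd V_t$. For the base case $t=1$ one computes $X_1 = C_\gamma^{\T}C_\gamma$ from \cref{eq:ddre:care}; on the other side $\wtd V_1^{\T}(I+Y_\gamma Y_\gamma^{\T})^{-1}\wtd V_1 = \wtd C^{\T}(I+Y_\gamma Y_\gamma^{\T})^{-1}\wtd C = 2\gamma\,\what A_\gamma^{-\T}C^{\T}(I+Y_\gamma Y_\gamma^{\T})^{-1}C\what A_\gamma^{-1} = C_\gamma^{\T}C_\gamma$ by the definition of $C_\gamma$ recorded just before \cref{eq:ddre:care}. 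For the inductive step, substitute the ansatz into $X_{t+1}=C_\gamma^{\T}C_\gamma + A_\gamma^{\T}X_t(I+B_\gamma B_\gamma^{\T}X_t)^{-1}A_\gamma$ and push the inner inverse through using \cref{eq:easy} exactly as in the proof of \cref{thm:fixedpoint}: this turns the expression into $C_\gamma^{\T}C_\gamma + \wtd A^{\T}\wtd V_t^{\T}\bigl(I+\wtd T_t\wtd T_t^{\T}+\wtd V_t\wtd B\wtd B^{\T}\wtd V_t^{\T}\bigr)^{-1}\wtd V_t\wtd A$ (using $A_\gamma=\wtd A$, $B_\gamma B_\gamma^{\T}=\wtd B\wtd B^{\T}$ up to the symmetric-factorization identities already established, and $C_\gamma^{\T}C_\gamma=\wtd C^{\T}(I+Y_\gamma Y_\gamma^{\T})^{-1}\wtd C$). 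One then recognizes the whole thing as
\[
\begin{bmatrix}\wtd C\\ \wtd V_t\wtd A\end{bmatrix}^{\T}
\left(I+\begin{bmatrix}Y_\gamma & 0\\ \wtd V_t\wtd B & \wtd T_t\end{bmatrix}\begin{bmatrix}Y_\gamma & 0\\ \wtd V_t\wtd B & \wtd T_t\end{bmatrix}^{\T}\right)^{-1}
\begin{bmatrix}\wtd C\\ \wtd V_t\wtd A\end{bmatrix}
= \wtd V_{t+1}^{\T}(I+\wtd T_{t+1}\wtd T_{t+1}^{\T})^{-1}\wtd V_{t+1},
\]
closing the induction. The ``moreover'' part — monotonicity and $X_t\to X_\star$ — follows verbatim from \cref{lm:ddre:monotonic} applied to the operator of \cref{eq:ddre:care}, together with the facts that $X_0=0\preceq X_1$ and that $X_\star\succeq 0$ is a fixed point, so $0\preceq X_1\preceq X_2\preceq\cdots\preceq X_\star$ and the bounded monotone sequence converges to $X_\star$; here one must also invoke that $X_\star$ for \cref{eq:care} coincides with the d-stabilizing solution of the DARE \cref{eq:ddre:care}, which is already recorded in the text via the Cayley transformation and the SDA-equivalence discussion.

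The main obstacle I anticipate is bookkeeping around the nonzero diagonal block $Y_\gamma$: unlike the DARE case where $T_1=0$, here $\wtd T_1 = Y_\gamma\neq 0$, so the block-triangular factorization of $I+\wtd T_{t+1}\wtd T_{t+1}^{\T}$ must be tracked carefully to confirm that the leading block in the stacked identity is $I+Y_\gamma Y_\gamma^{\T}$ (not $I$), and hence that the $\wtd C$-row genuinely reproduces $C_\gamma^{\T}C_\gamma$ rather than $\wtd C^{\T}\wtd C$. Verifying the identities $G_0=B_\gamma B_\gamma^{\T}$, $H_0=C_\gamma^{\T}C_\gamma$, $A_0=A_\gamma$, and $A_\gamma = I+2\gamma\what A_\gamma^{-1}-B_\gamma Y_\gamma^{\T}C_\gamma = \wtd A$ modulo the $(I+Y_\gamma^{\T}Y_\gamma)^{\pm 1/2}$ factors is a short but error-prone Sherman--Morrison--Woodbury computation; fortunately these are already displayed in the text preceding \cref{thm:fixedpoint:care}, so I would simply cite them. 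Everything else is a routine transcription of the DARE proof.
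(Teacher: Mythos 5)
Your plan is the paper's own proof: induction on $t$, base case via $C_\gamma^{\T}C_\gamma=\wtd C^{\T}(I+Y_\gamma Y_\gamma^{\T})^{-1}\wtd C$, inductive step via \cref{eq:easy}, and the ``moreover'' part inherited verbatim from the DARE analysis. However, the one intermediate identity you display is wrong as written, and it sits exactly at the spot you yourself flag as the main obstacle. After pushing the inverse through, the expression is \emph{not} $C_\gamma^{\T}C_\gamma+\wtd A^{\T}\wtd V_t^{\T}(I+\wtd T_t\wtd T_t^{\T}+\wtd V_t\wtd B\wtd B^{\T}\wtd V_t^{\T})^{-1}\wtd V_t\wtd A$: since $A_\gamma=\wtd A-\wtd BY_\gamma^{\T}(I+Y_\gamma Y_\gamma^{\T})^{-1}\wtd C$ and $B_\gamma B_\gamma^{\T}=\wtd B(I+Y_\gamma^{\T}Y_\gamma)^{-1}\wtd B^{\T}$, the correct intermediate form is
\[
\wtd C^{\T}(I+Y_\gamma Y_\gamma^{\T})^{-1}\wtd C + A_\gamma^{\T}\wtd V_t^{\T}\left(I+\wtd T_t\wtd T_t^{\T}+\wtd V_t\wtd B(I+Y_\gamma^{\T}Y_\gamma)^{-1}\wtd B^{\T}\wtd V_t^{\T}\right)^{-1}\wtd V_tA_\gamma .
\]
The passage from this to $\wtd V_{t+1}^{\T}(I+\wtd T_{t+1}\wtd T_{t+1}^{\T})^{-1}\wtd V_{t+1}$ is an undoing of the block LDU factorization of $I+\wtd T_{t+1}\wtd T_{t+1}^{\T}$: the Schur complement of its $(1,1)$ block $I+Y_\gamma Y_\gamma^{\T}$ equals $I+\wtd T_t\wtd T_t^{\T}+\wtd V_t\wtd B\wtd B^{\T}\wtd V_t^{\T}-\wtd V_t\wtd BY_\gamma^{\T}(I+Y_\gamma Y_\gamma^{\T})^{-1}Y_\gamma\wtd B^{\T}\wtd V_t^{\T}$, which by \cref{eq:smwf} is the middle matrix above, while the elimination factor applied to $\begin{bmatrix}\wtd C^{\T}&(\wtd V_t\wtd A)^{\T}\end{bmatrix}^{\T}$ turns the second block into $\wtd V_tA_\gamma$. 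Your displayed formula implicitly treats $(I+\wtd T_{t+1}\wtd T_{t+1}^{\T})^{-1}$ as block diagonal, which it is not because of the off-diagonal blocks $Y_\gamma\wtd B^{\T}\wtd V_t^{\T}$; the paper writes this completing-the-square step out explicitly, and with that correction your induction closes exactly as described.
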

\begin{proof}
	The monotonicity and the convergence of the sequence are the same as that for DAREs in \cref{ssec:in-the-view-of-matrix-analysis} and hence omitted. Only \cref{eq:noniter:X:X:care} is proved here.

	It is easy to verify that \cref{eq:noniter:X:X:care} is correct for $t=1$.
	Assuming \cref{eq:noniter:X:X:care} is correct for $t$, we are going to prove it is also correct for $t+1$.
	By the DRE \cref{eq:ddre:care},
\begin{align*}
	X_{t+1}
	&=
	\begin{multlined}[t]
		\wtd C^{\T}(I+Y_\gamma Y_\gamma ^{\T})^{-1}\wtd C + (\wtd A-\wtd BY_\gamma ^{\T}(I+Y_\gamma Y_\gamma ^{\T})^{-1}\wtd C)^{\T}\wtd V_t^{\T}\left(I+\wtd T_t\wtd T_t^{\T}\right)^{-1}\wtd V_t
		\\\cdot\left(I+\wtd B(I+Y_\gamma ^{\T}Y_\gamma )^{-1}\wtd B^{\T}\wtd V_t^{\T}\left(I+\wtd T_t\wtd T_t^{\T}\right)^{-1}\wtd V_t\right)^{-1}(\wtd A-\wtd BY_\gamma ^{\T}(I+Y_\gamma Y_\gamma ^{\T})^{-1}\wtd C)
	\end{multlined}
	\\&\clue{\cref{eq:easy}}{=}
	\begin{multlined}[t]
		\wtd C^{\T}(I+Y_\gamma Y_\gamma ^{\T})^{-1}\wtd C + (\wtd A-\wtd BY_\gamma ^{\T}(I+Y_\gamma Y_\gamma ^{\T})^{-1}\wtd C)^{\T}\wtd V_t^{\T}
		\\\cdot\left(I+\wtd T_t\wtd T_t^{\T}+\wtd V_t\wtd B(I+Y_\gamma ^{\T}Y_\gamma )^{-1}\wtd B^{\T}\wtd V_t^{\T}\right)^{-1}\wtd V_t(\wtd A-\wtd BY_\gamma ^{\T}(I+Y_\gamma Y_\gamma ^{\T})^{-1}\wtd C)
	\end{multlined}
		\\&=
		(*)^{\T}\begin{bmatrix}
			I+Y_\gamma Y_\gamma ^{\T} & \\  & I+\wtd T_t\wtd T_t^{\T}+\wtd V_t\wtd B(I+Y_\gamma ^{\T}Y_\gamma )^{-1}\wtd B^{\T}\wtd V_t^{\T}
			\end{bmatrix}^{-1}
	\begin{bmatrix}
			\wtd C\\ \wtd V_t\wtd A-\wtd V_t\wtd BY_\gamma ^{\T}(I+Y_\gamma Y_\gamma ^{\T})^{-1}\wtd C
		\end{bmatrix}
		\\&= 
		\begin{multlined}[t]
		(*)^{\T}\begin{bmatrix}
			I+Y_\gamma Y_\gamma ^{\T} & \\  & I+\wtd T_t\wtd T_t^{\T}+\wtd V_t\wtd B\wtd B^{\T}\wtd V_t^{\T}-\wtd V_t\wtd BY_\gamma ^{\T}(I+Y_\gamma Y_\gamma ^{\T})^{-1}Y_\gamma \wtd B^{\T}\wtd V_t^{\T}
			\end{bmatrix}^{-1}
			\\\cdot\begin{bmatrix}
			I & \\ -\wtd V_t\wtd BY_\gamma ^{\T}(I+Y_\gamma Y_\gamma ^{\T})^{-1} & I
			\end{bmatrix}\begin{bmatrix}
			\wtd C\\ \wtd V_t\wtd A
		\end{bmatrix}
		\end{multlined}
		\\&= \begin{bmatrix}
			\wtd C\\ \wtd V_t\wtd A
			\end{bmatrix}^{\T}\begin{bmatrix}
				I+Y_\gamma Y_\gamma ^{\T} & Y_\gamma \wtd B^{\T}\wtd V_t^{\T}\\ \wtd V_t\wtd BY_\gamma ^{\T} & I+\wtd T_t\wtd T_t^{\T}+\wtd V_t\wtd B\wtd B^{\T}\wtd V_t^{\T}
			\end{bmatrix}^{-1}\begin{bmatrix}
			\wtd C\\ \wtd V_t\wtd A
		\end{bmatrix}
	\\&=\begin{bmatrix}
			\wtd C\\ \wtd V_t\wtd A
			\end{bmatrix}^{\T}\left(I+\begin{bmatrix}
				Y_\gamma  & 0\\ \wtd V_t\wtd B & \wtd T_t
				\end{bmatrix}\begin{bmatrix}
				Y_\gamma  & 0\\ \wtd V_t\wtd B & \wtd T_t
			\end{bmatrix}^{\T}\right)^{-1}\begin{bmatrix}
			\wtd C\\ \wtd V_t\wtd A
		\end{bmatrix}
	\\&=\wtd V_{t+1}^{\T}(I+\wtd T_{t+1}\wtd T_{t+1}^{\T})^{-1}\wtd V_{t+1}
	.
\end{align*}
Here $*$ is still used to indicate the same part limited by the symmetry.
\end{proof}

Using the notations for Toeplitz matrices in \cref{sec:preliminary},
 we have 
\[
	\wtd T_t
	=\toepL_{l\times m}\left(\begin{bmatrix}
		Y_\gamma \\ \wtd V_{t-1}\wtd B\\
	\end{bmatrix}\right)
	=I_t\otimes Y_\gamma +\begin{bmatrix}
		0 & 0\\
		\toepL_{l\times m}( \wtd V_{t-1}\wtd B) & 0 \\
	\end{bmatrix}=\begin{bmatrix}
		Y_\gamma  & 0\\ \wtd V_{t-1}\wtd B & \wtd T_{t-1}
	\end{bmatrix}
	.
\]
\begin{theorem}\label{thm:form-of-solution:care}
Let 
\begin{subequations}\label{eq:Q:care}
	\begin{align}
		\left(I_{tl}+\wtd T_t\wtd T_t^{\T}\right)\begin{bmatrix}
					Q_2 \\Q_1\\
				\end{bmatrix}&=
			\begin{bmatrix}
				0 \\ I_l \\
			\end{bmatrix}
			,\qquad Q_1\in \mathbb{R}^{l\times l}
			,\\
			\left(I_{(t-1)l}+\wtd V_{t-1}\wtd B\wtd B^{\T}\wtd V_{t-1}^{\T}+\wtd T_{t-1}\wtd T_{t-1}^{\T}\right)Q_3&=
				 \wtd V_{t-1}\wtd B
			,
	\end{align}
\end{subequations}
and $W=I_{m}-Q_3^{\T} \wtd V_{t-1}\wtd B$.
	Then the sequence $X_t$ defined by DRE \cref{eq:noniter:X:X:care} can be generated by
	\begin{equation}\label{eq:Xt=SDS:care}
		X_t= 
			\begin{bmatrix}
				 \Xi_1 \\  \Xi_2
			\end{bmatrix}^{\T}\begin{bmatrix}
			(I_{t}\otimes Q_1)^{-1}&\\&(I_{t}\otimes \left[W+WY_\gamma ^{\T}Y_\gamma W\right])^{-1}
			\end{bmatrix}\begin{bmatrix}
				 \Xi_1 \\  \Xi_2
			\end{bmatrix},
	\end{equation}
	where
	\[
		\Xi_1 = \toepU_{l\times l}\left(\begin{bmatrix}
				Q_2\\Q_1\\
		\end{bmatrix}\right)^{\T}\wtd V_t\in \mathbb{R}^{tl\times n},
		\qquad
		\Xi_2=\toepU_{l\times m}\left(\begin{bmatrix}
				Q_3\\0\\
		\end{bmatrix}\right)^{\T} \wtd V_t\in \mathbb{R}^{tm\times n}
		.
	\]
\end{theorem}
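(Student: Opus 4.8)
The plan is to follow the proof of \cref{thm:form-of-solution:dare} step for step, the only structural change being that the leading block of the relevant block-Toeplitz matrix is now $Y_\gamma$ instead of $0$. I would start from the closed form $X_t=\wtd V_t^{\T}(I+\wtd T_t\wtd T_t^{\T})^{-1}\wtd V_t$ of \cref{thm:fixedpoint:care} (that is, \cref{eq:noniter:X:X:care}) together with the block decomposition $\wtd T_t=\toepL_{l\times m}([Y_\gamma;\,\wtd V_{t-1}\wtd B])$ recorded just above the statement, whose leading block is $Y_\gamma$ and whose sub-diagonal data is $\wtd V_{t-1}\wtd B$. The task then reduces to rewriting $(I+\wtd T_t\wtd T_t^{\T})^{-1}$ in the two-term Toeplitz-sandwiched form of \cref{eq:Xt=SDS:care} and conjugating it by $\wtd V_t$.

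For the rewriting I would invoke \cref{lm:form-of-inverse:practical} with the substitution $Y\leftarrow Y_\gamma$, $D_{t-1}\leftarrow\wtd V_{t-1}\wtd B$ and the ``negative-transpose'' choice $Y^L\leftarrow -Y_\gamma^{\T}$, $D^L_{t-1}\leftarrow -(\wtd V_{t-1}\wtd B)^{\T}$ used in the DARE case (so that $T_t\leftarrow\wtd T_t$ and $T^L_t\leftarrow -\wtd T_t^{\T}$). Its hypothesis is satisfied because $I-T_tT^L_t=I+\wtd T_t\wtd T_t^{\T}\succeq I\succ0$ is nonsingular; in particular the lemma itself delivers the solvability of the linear systems and the nonsingularity of $Q_{2,b}$ and $W+WY^LYW$, so none of that needs a separate argument. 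Under the substitution, the right-solves among the lemma's defining equations become exactly \cref{eq:Q:care}: the $Q_2$-system sits on the full $t$-block matrix $I+\wtd T_t\wtd T_t^{\T}$, while the $Q_3$-system sits on the $(t-1)$-block matrix $I-D_{t-1}D^L_{t-1}-T_{t-1}T^L_{t-1}=I+\wtd V_{t-1}\wtd B\wtd B^{\T}\wtd V_{t-1}^{\T}+\wtd T_{t-1}\wtd T_{t-1}^{\T}$; and $W=-(I+Q_3^LD_{t-1})$ becomes $I-Q_3^{\T}\wtd V_{t-1}\wtd B$, matching the statement.

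What remains is the sign-and-transpose bookkeeping that converts the lemma's $(\cdot)^L$-decorated objects into the symmetric, sign-normalized ones in the statement. Symmetry of $I+\wtd T_t\wtd T_t^{\T}$ and of $I+\wtd V_{t-1}\wtd B\wtd B^{\T}\wtd V_{t-1}^{\T}+\wtd T_{t-1}\wtd T_{t-1}^{\T}$, compared against the left-solves in \cref{lm:form-of-inverse:practical}, gives $Q_2^L=Q_2^{\T}$, $Q_3^L=-Q_3^{\T}$, with $Q_{2,b}$ and $W$ symmetric; the $\toepU(\cdot^{\T})^{\T}$/$\toepL$ reversal identities of \cref{sec:preliminary} then collapse the first term of the lemma's formula to $\toepU_{l\times l}([Q_2;\,Q_{2,b}])(I\otimes Q_{2,b})^{-1}\toepU_{l\times l}([Q_2;\,Q_{2,b}])^{\T}$. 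In the second term there are two compensating sign flips: $Y^LY=-Y_\gamma^{\T}Y_\gamma$ makes $W+WY^LYW=-(W+WY_\gamma^{\T}Y_\gamma W)$, while $Q_3^L=-Q_3^{\T}$ flips the trailing factor $\toepU_{l\times m}([Q_3;\,0])^{\T}$; after cancellation the term reads $\toepU_{l\times m}([Q_3;\,0])(I\otimes[W+WY_\gamma^{\T}Y_\gamma W])^{-1}\toepU_{l\times m}([Q_3;\,0])^{\T}$. Substituting the resulting expression for $(I+\wtd T_t\wtd T_t^{\T})^{-1}$ into \cref{eq:noniter:X:X:care} and setting $\Xi_1=\toepU_{l\times l}([Q_2;\,Q_{2,b}])^{\T}\wtd V_t$, $\Xi_2=\toepU_{l\times m}([Q_3;\,0])^{\T}\wtd V_t$ then gives \cref{eq:Xt=SDS:care}.

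The main obstacle is exactly this bookkeeping — keeping the reversal convention in $\toepU(\cdot^{\T})^{\T}$ straight, confirming the $(t-1)$-versus-$t$ block indexing of the two $Q$-systems, and checking that the two sign flips in the second term really do cancel; once that is settled the identity is an immediate consequence of \cref{lm:form-of-inverse:practical,thm:fixedpoint:care}. As a sanity check: unlike \cref{thm:form-of-solution:dare}, there is no separate $\wtd C^{\T}\wtd C$ summand, because there the top-left block of $T_t$ is $0$ and $(I+T_tT_t^{\T})^{-1}$ decouples an identity block that the leading block $C$ of $V_t$ absorbs directly, whereas here the top-left block of $\wtd T_t$ is $Y_\gamma\neq0$ and no such splitting occurs, so both parts of $X_t$ are carried by the two Toeplitz terms.
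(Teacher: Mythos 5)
Your proposal is correct and follows exactly the paper's route: apply \cref{lm:form-of-inverse:practical} with $Y=-(Y^L)^{\T}\leftarrow Y_\gamma$, $D_{t-1}=-(D^L_{t-1})^{\T}\leftarrow \wtd V_{t-1}\wtd B$ to obtain the two-term decomposition of $(I+\wtd T_t\wtd T_t^{\T})^{-1}$, then substitute into \cref{eq:noniter:X:X:care}. The paper states this in two lines and leaves the sign-and-transpose bookkeeping implicit, which you carry out explicitly and correctly.
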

\begin{proof}
	By \cref{lm:form-of-inverse:dare} with $Y\leftarrow Y_\gamma , D_{t-1}\leftarrow \wtd V_{t-1}\wtd B$,
	\begin{equation}\label{eq:I+TT:inv:care}
		\begin{multlined}[t]
			(I+\wtd T_t\wtd T_t^{\T})^{-1}
			=\toepU_{l\times m}\left(\begin{bmatrix}
					Q_2\\Q_1\\
					\end{bmatrix}\right)(I\otimes Q_1)^{-1}\toepU_{l\times m}\left(\begin{bmatrix}
					Q_2\\Q_1\\
			\end{bmatrix}\right)^{\T}
			\\\hspace{2cm}+\toepU_{l\times m}\left(\begin{bmatrix}
					Q_3\\0\\
					\end{bmatrix}\right)\left(I\otimes \left[W+WY_\gamma ^{\T}Y_\gamma W\right]\right)^{-1}\toepU_{l\times m}\left(\begin{bmatrix}
					Q_3\\0\\
			\end{bmatrix}\right)^{\T}
			.
			\end{multlined}
	\end{equation}
Then the result follows from \cref{eq:noniter:X:X:care}.
\end{proof}
\Cref{thm:form-of-solution:care} suggests a similar algorithm, \cref{alg:fna-c}, to approximate the solution of CAREs.

\begin{algorithm}[h]
	\caption{FFT-based Toeplitz-structured Approximation (FTA) for CAREs}\label{alg:fna-c}
	\begin{algorithmic}[1]
		\REQUIRE $A \in \mathbb{R}^{n\times n},B \in \mathbb{R}^{n\times m},C \in \mathbb{R}^{l\times n}$ and $\gamma,t$.
		\STATE Compute $\what A_\gamma = A-\gamma I$ and generate the linear-system solver $\what A_\gamma^{-1}$ for the sparse $A$ (or its PLU factorization $\what A_\gamma = PLU$ for the dense $A$).
		\STATE Compute $\mathtt{(tmp)}=U^{-1}L^{-1}P^{-1}B,\wtd C=\sqrt{2\gamma}C\what A_\gamma^{-1}$
		by the linear solver or forward/backward substitution, and compute $\wtd B = \sqrt{2\gamma}\mathtt{(tmp)}, Y_\gamma =C\mathtt{(tmp)}$.
		\STATE Compute sequentially 
		$\wtd C\cdot \wtd A, \wtd C\wtd A\cdot \wtd A,\dots,\wtd C\wtd A^{t-3} \cdot \wtd A, \wtd C\wtd A^{t-2}\cdot \wtd A$
		by the way $\mathtt{(tmp)}\wtd A=\mathtt{(tmp)}+2\gamma \mathtt{(tmp)}\what A_\gamma^{-1}$ 
		by the linear solver or forward/backward substitution, and form $\wtd V_t\in \mathbb{R}^{tl\times n}$ by stacking $\wtd C$ and the $t-1$ terms vertically in order, where the first $t-1$ terms consists of $\wtd V_{t-1}$.
		\STATE Compute $\wtd V_{t-1}\wtd B\in \mathbb{R}^{(t-1)l\times m}$ and form $\begin{bmatrix}
			Y_\gamma \\\wtd V_{t-1}\wtd B
		\end{bmatrix}\in \mathbb{R}^{tl\times m}$.
		\STATE Use Preconditioned Conjugate Gradient method to solve \cref{eq:Q:care}.
		\STATE Compute $W=I_{m}-Q_3^{\T}\wtd V_{t-1}\wtd B, \mathtt{(tmp)}=Y_\gamma W, \wtd W= W+\mathtt{(tmp)}^{\T}\mathtt{(tmp)}$ and then the Cholesky factorizations of $Q_1=L_QL_Q^{\T}$ and $\wtd W=L_WL_W^{\T}$.
		\STATE Use fast multiplication to obtain $S_1=\toepU_{l\times l}\left(\begin{bmatrix}
				Q_2\\Q_1\\
				\end{bmatrix}L_Q^{-\T}\right)^{\T}\wtd V_t\in \mathbb{R}^{tl\times n}$ and $S_2=\toepU_{l\times m}\left(\begin{bmatrix}
				Q_3\\0\\
		\end{bmatrix}L_W^{-\T}\right)^{\T} \wtd V_t\in \mathbb{R}^{tm\times n}$,
		and form $S=\begin{bmatrix}
			 S_1\\ S_2
		 \end{bmatrix}\in \mathbb{R}^{t(l+m)\times n}$.
		\ENSURE $S$ which satisfies $S^{\T}S\approx X_\star\in \mathbb{R}^{n\times n}$.
	\end{algorithmic}
\end{algorithm}

Some remarks are given below to illustrate the algorithm.
\paragraph{Parameter and output}
\begin{enumerate}
	\item Considerations similar to \cref{alg:fna-d} have to be made.
\Cref{eq:Xt=SDS:care} is not a compact form either,
and some truncation/reduction/shrinking technique may be brought in.
\end{enumerate}

\paragraph{Time complexity}
Complexity for $S$, the factor of $X_t$:
\begin{enumerate}
	\item Step~1, compute $\what A_\gamma$ and its PLU factorization in $n+\frac{n(n-1)(4n+1)}{6}$ flops.
	\item Step~2, compute $\wtd C, \wtd B, Y_\gamma $ in $2n^2m+2n^2l+nm+nl+lm(2n-1)$ flops.
	\item Step~3, compute $\wtd V_t$, namely $\wtd C\wtd A,\wtd C\wtd A^2,\dots,\wtd C\wtd A^{t-1}$, in $(t-1)[2n^2l+nl+nl]$ flops. 
	\item Step~4, compute $\wtd V_{t-1}\wtd B$ in $(t-1)(2n-1)lm$ flops.
	\item Step~5, Use $M$-step PCG (suppose one-step PCG is done in $Ntl\ln l$ flops for fast multiplication where $N$ is a constant), to compute $Q_{*,*}$, in $\OO(MNtl[\ln^2t+\ln(tl)](l+m))$ flops.
	\item Step~6, compute $W,\wtd W$ in $(2tl-1)\frac{m(m+1)}{2}+m+(2m-1)lm+(2l-1)\frac{m(m+1)}{2}$ flops, and $L_Q,L_W$ in $\frac{2}{3}l(l-1)(l+4)$ flops.
	\item Step~7, compute $\begin{bmatrix}
				Q_2\\Q_1\\
				\end{bmatrix}L_Q^{-\T},\begin{bmatrix}
				Q_3\\0\\
			\end{bmatrix}L_W^{-\T}$ in $tl^3+(t-1)lm^2$ flops; compute $S_1,S_2$ in $\OO(2Nntl\ln l)$ flops.
	\item To sum up, assuming $l\ll n, m\ll n$ and omitting lower order terms, the total complexity is $\frac{2}{3}n^3+2(l+m)n^2+2lmn+2ln^2t+2lmnt+\OO(MNl(l+m)t\ln^2t+2Nltn\ln l)=\frac{2}{3}n^3+\OO\left(t(n^2+\ln^2t)\right)$ flops.
	\item Suppose $A$ is sparse, and the number of nonzero entries is $\nnz(A)$.
		The PLU factorization in Step~1 can be replaced by an iterative solver with at most $P$ steps, such as CG, MINRES and GMRES.
		The total complexity is $\OO(P(m+l)\nnz(A)+lmn+lt\nnz(A)+lmnt+MNl(l+m)t\ln^2t+2Nltn\ln l)=\OO\left(t(\nnz(A)+n+\ln^2t)\right)$ flops.
		It is worthwhile to mention that computing $\what A_\gamma^{-1}$ or solving the corresponding linear systems is necessary for all methods like RADI and the Cayley transformed Hamiltonian subspace iteration.
\end{enumerate}
\paragraph{Space complexity}
\begin{enumerate}
	\item The storage is similar to that of \cref{alg:fna-d}.
\end{enumerate}

\subsection{Incorporation technique}\label{ssec:incorporation-technique}

In the following, we consider the incorporation technique (a.k.a. defect correction).
The key idea is: once an approximate solution $\wtd X$ is obtained, letting the difference from the exact solution $X_{\star}$ be $\Delta$, namely $X_{\star}=\wtd X+\Delta$, the difference satisfies $A^{\T}(\wtd X+\Delta)+(\wtd X+\Delta)A+C^{\T}C-(\wtd X+\Delta)BB^{\T}(\wtd X+\Delta)=0$, from which an approximation $\wtd\Delta$ can be generated and then $\wtd X+\wtd \Delta$ should be an approximate solution to the original equation better than $\wtd X$.
More details can be found in \cite{huangLL2018structurepreserving,bennerBKS2018radi}.
The following lemma is important as the guarantee of the validity of the incorporation technique.
\begin{lemma}[{\cite[Theorem~1]{bennerBKS2018radi}}]\label{lm:incorp}
	Let $\wtd X$ be an approximation to a solution to \cref{eq:care}.
	\begin{enumerate}
		\item $\Delta=X_{\star}-\wtd X$ is a solution to the equation
			\begin{equation}\label{eq:incorp}
				(A-BB^{\T}\wtd X)^{\T}\Delta+\Delta(A-BB^{\T}\wtd X)+\op C(\wtd X)-\Delta BB^{\T}\Delta=0.
			\end{equation}
		\item Conversely, if $\Delta$ is a solution to \cref{eq:incorp}, then $\wtd X+\Delta$ is a solution to \cref{eq:care}.
			Moreover, if $\wtd X\succeq0$ and $\Delta$ is a c-stabilizing solution to \cref{eq:incorp}, then $\wtd X+\Delta$ is the c-stabilizing solution to \cref{eq:care}.
		\item If $\wtd X\succeq0,\op C(\wtd X)\succeq0$, then $\Delta$ is the unique c-stabilizing solution to \cref{eq:incorp}.
		\item If $\wtd X\succeq0,\op C(\wtd X)\succeq0$, then $\Delta\preceq X_{\star}$.
	\end{enumerate}
\end{lemma}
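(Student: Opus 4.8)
The entire statement is driven by a single algebraic identity, so I would establish that first and then read off (1)--(4) from it together with the standing c-stabilizability and c-detectability hypotheses of \cref{sec:care}. Substituting $X=\wtd X+\Delta$ into $\op C(X)=A^{\T}X+XA-XBB^{\T}X+C^{\T}C$, expanding $XBB^{\T}X$, and collecting the terms involving only $\wtd X$, one obtains for every symmetric $\Delta$
\[
	\op C(\wtd X+\Delta)=\op C(\wtd X)+(A-BB^{\T}\wtd X)^{\T}\Delta+\Delta(A-BB^{\T}\wtd X)-\Delta BB^{\T}\Delta,
\]
where the cross terms recombine thanks to $\wtd X=\wtd X^{\T}$ and $(BB^{\T})^{\T}=BB^{\T}$. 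Setting $\Delta=X_\star-\wtd X$ and using $\op C(X_\star)=0$ gives part~(1), and reading the identity from right to left gives the first assertion of part~(2).

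For the remaining assertions I would argue through closed-loop matrices. The closed-loop matrix attached to \cref{eq:incorp} at $\Delta$ is $(A-BB^{\T}\wtd X)-BB^{\T}\Delta=A-BB^{\T}(\wtd X+\Delta)$, which is exactly the closed-loop matrix of \cref{eq:care} at $\wtd X+\Delta$. Hence a stabilizing solution $\Delta$ of \cref{eq:incorp} produces a c-stabilizing solution $\wtd X+\Delta$ of \cref{eq:care}, and since the standing assumptions make the c-stabilizing solution of \cref{eq:care} unique, $\wtd X+\Delta=X_\star$; this is the ``moreover'' part of (2). The same comparison shows that $\Delta_\star:=X_\star-\wtd X$, which solves \cref{eq:incorp} by (1), has closed-loop matrix $A-BB^{\T}X_\star$, and this is c-stable because $X_\star$ is c-stabilizing, so $\Delta_\star$ is a stabilizing solution of \cref{eq:incorp}; combined with the uniqueness just used, this gives (3). (Under $\op C(\wtd X)\succeq0$ one may instead regard \cref{eq:incorp} as a genuine CARE with coefficient matrices $A-BB^{\T}\wtd X$, $B$, and a square root of $\op C(\wtd X)$, and invoke the classical existence/uniqueness theory; the point to verify is then that stabilizability and detectability survive the replacements $A\mapsto A-BB^{\T}\wtd X$ and $C^{\T}C\mapsto\op C(\wtd X)$, the former being clear since $[\,(A-BB^{\T}\wtd X)-\lambda I\ \ B\,]$ differs from $[\,A-\lambda I\ \ B\,]$ by right-multiplication by an invertible block matrix.) Finally (4) is immediate: by (3) the stabilizing solution of \cref{eq:incorp} is $X_\star-\wtd X$, so $X_\star-\Delta=\wtd X\succeq0$, i.e.\ $\Delta\preceq X_\star$.

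The bulk of the work is the bookkeeping in the quadratic expansion, which is routine. The step I would treat most carefully is the ``moreover'' clause of (2) together with part~(3): everything there rests on the uniqueness of the c-stabilizing solution of the CARE under the assumptions in force, and --- on the alternative route that views \cref{eq:incorp} as a CARE in its own right --- on confirming that \emph{both} the stabilizability and the detectability conditions are inherited after the substitution, the detectability one being the less transparent of the two.
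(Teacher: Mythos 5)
The paper does not prove this lemma at all: it is imported verbatim from the RADI paper with the citation \cite[Theorem~1]{bennerBKS2018radi}, so there is no in-paper argument to compare yours against. Your self-contained proof is correct. The quadratic expansion $\op C(\wtd X+\Delta)=\op C(\wtd X)+(A-BB^{\T}\wtd X)^{\T}\Delta+\Delta(A-BB^{\T}\wtd X)-\Delta BB^{\T}\Delta$ checks out (the cross terms recombine exactly as you say), it yields (1) and the first half of (2) immediately, and the observation that the closed-loop matrix of \cref{eq:incorp} at $\Delta$ coincides with the closed-loop matrix of \cref{eq:care} at $\wtd X+\Delta$ correctly transports ``stabilizing'' back and forth, giving the rest of (2), then (3), and (4). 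One remark worth making explicit: your direct route never actually uses the hypotheses $\wtd X\succeq0$ and $\op C(\wtd X)\succeq0$, because you lean on the standing c-stabilizability/c-detectability assumptions of \cref{sec:care} to get uniqueness of the c-stabilizing solution of \cref{eq:care}. That is legitimate in the context of this paper, but it is why the cited theorem carries those semidefiniteness hypotheses --- they are what the original argument needs when \cref{eq:incorp} is treated as a CARE in its own right (so that $\op C(\wtd X)$ admits a factorization $\wtd C^{\T}\wtd C$ and the classical existence/uniqueness theory applies), exactly the alternative route you sketch in your parenthetical. If you wanted the lemma at the generality of the source, you would need to complete that parenthetical, including the detectability inheritance you flag as the delicate step; as a proof of the lemma as used in this paper, what you have is sufficient.
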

To make the incorporation technique useful for \cref{alg:fna-c}, 
the fundamental problem we face is the low rank factorization of $\op C(\wtd X)$.
Let $\wtd X=X_t$ that we have obtained.
\begin{theorem}\label{thm:incorp:care}
Let $\bs 1_t\in \mathbb{R}^{t}$ be a vector with each entry one.
Then for $X_t$ defined by DRE \cref{eq:ddre:care}, $\op C(X_t)=C_t^{\T}C_t$, where 
\begin{equation}\label{eq:thm:incorp:care}
	C_0=C,\qquad C_t=C+\sqrt{2\gamma}(\bs 1_t^{\T}\otimes I_l)(I+\wtd T_t\wtd T_t^{\T})^{-1}\wtd V_t.
\end{equation}
\end{theorem}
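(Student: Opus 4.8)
The plan is to prove the identity by induction on $t$. The base case $t=0$ is immediate: $\op C(X_0)=\op C(0)=C^\T C=C_0^\T C_0$, and the stated formula for $C_t$ degenerates to $C$ when $t=0$ (empty sum), so $C_0=C$ is consistent.

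For the inductive step I would combine three ingredients, all available by this point in the paper: (i) the DRE recursion \cref{eq:ddre:care}, $X_{t+1}=C_\gamma^\T C_\gamma+A_\gamma^\T X_t(I+B_\gamma B_\gamma^\T X_t)^{-1}A_\gamma$; (ii) the closed form $X_t=\wtd V_t^\T(I+\wtd T_t\wtd T_t^\T)^{-1}\wtd V_t$ of \cref{thm:fixedpoint:care}, together with the block recursions $\wtd V_{t+1}=\begin{bmatrix}\wtd C\\ \wtd V_t\wtd A\end{bmatrix}$, $\wtd T_{t+1}=\begin{bmatrix}Y_\gamma&0\\ \wtd V_t\wtd B&\wtd T_t\end{bmatrix}$ and the Schur/block inversion of $I+\wtd T_{t+1}\wtd T_{t+1}^\T$ that is already carried out inside the proof of \cref{thm:fixedpoint:care}; and (iii) the Cayley identities linking the two coefficient triples, namely $\what A_\gamma\wtd A=A+\gamma I$ (so $A=\what A_\gamma\wtd A-\gamma I$), $\wtd C\what A_\gamma=\sqrt{2\gamma}\,C$, $\what A_\gamma\wtd B=\sqrt{2\gamma}\,B$, and $Y_\gamma=C\what A_\gamma^{-1}B$, all of which follow directly from the definitions of $\wtd A,\wtd B,\wtd C$.

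Concretely, I would substitute the closed form of $X_t$ into $\op C(X_{t+1})=A^\T X_{t+1}+X_{t+1}A-X_{t+1}BB^\T X_{t+1}+C^\T C$, use the Cayley identities to clear all factors $\what A_\gamma^{\pm1}$ so that the whole expression runs through $\wtd A,\wtd B,\wtd C,Y_\gamma$ only, push the resolvent $(I+\wtd T_t\wtd T_t^\T)^{-1}$ past the quadratic term via \cref{eq:easy}, and then recognize the emerging Schur-complement expression as $(I+\wtd T_{t+1}\wtd T_{t+1}^\T)^{-1}$ using \cref{eq:smwf}; the inductive hypothesis $\op C(X_t)=C_t^\T C_t$ is fed in to absorb the lower-order pieces. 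The outcome should assemble into $\big(C+\sqrt{2\gamma}\,(\bs 1_{t+1}^\T\otimes I_l)(I+\wtd T_{t+1}\wtd T_{t+1}^\T)^{-1}\wtd V_{t+1}\big)^\T\big(C+\sqrt{2\gamma}\,(\bs 1_{t+1}^\T\otimes I_l)(I+\wtd T_{t+1}\wtd T_{t+1}^\T)^{-1}\wtd V_{t+1}\big)=C_{t+1}^\T C_{t+1}$; the summation vector $\bs 1_{t+1}^\T\otimes I_l$ appears because every one of the $t+1$ block-rows of $(I+\wtd T_{t+1}\wtd T_{t+1}^\T)^{-1}\wtd V_{t+1}$ contributes the same $l$-row term, a telescoping effect driven by the constant $\gamma$ in $\what A_\gamma\wtd A=A+\gamma I$.

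An alternative, more conceptual way to organize the same computation is via the incorporation technique of \cref{lm:incorp}: one step $X_t\mapsto X_{t+1}$ of the DRE \cref{eq:ddre:care} is exactly one step from the zero initial guess, with shift $\gamma$, of the analogous DRE for the shifted CARE with data $(A-BB^\T X_t,\,B,\,C_t)$, whose residual at the zero iterate equals $C_t^\T C_t=\op C(X_t)$ by the inductive hypothesis. Then \cref{lm:incorp} gives that $\op C(X_{t+1})$ is the residual of that shifted CARE at its first iterate, reducing the step to the case $t=1$ applied to the shifted triple; one then only checks that iterating the $t=1$ formula along the successive shifts reproduces the block-Toeplitz expression defining $C_{t+1}$. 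Either way, the main obstacle is not conceptual but algebraic bookkeeping: showing that every cross term generated in expanding $\op C(X_{t+1})$ collapses precisely into the rank-$l$ congruence $C_{t+1}^\T C_{t+1}$ with the stated $C_{t+1}$, and in particular that the truncation of the Toeplitz resolvent is exactly what produces the clean $(\bs 1_{t+1}^\T\otimes I_l)$ factor rather than a messier correction.
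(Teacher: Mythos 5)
Your proposal is a plan rather than a proof, and the part it leaves to ``algebraic bookkeeping'' is in fact the entire mathematical content of the statement; as written there is a genuine gap. The paper does not induct on $t$ at all: it fixes $t$, rewrites $A=\gamma I+2\gamma(\wtd A-I)^{-\T}$ (and its transpose) so that $\op C(X_t)$ becomes $2\gamma$ times an expression in $\wtd A,\wtd B,\wtd C$, and then establishes two explicit telescoping identities that you never identify, namely
\begin{equation*}
X_t(\wtd A-I)^{-1}\wtd B=\wtd V_t^{\T}(I+\wtd T_t\wtd T_t^{\T})^{-1}\wtd T_t(\bs 1_t\otimes I_m),
\qquad
X_t(\wtd A-I)^{-1}=\wtd V_t^{\T}(I+\wtd T_t\wtd T_t^{\T})^{-1}\Bigl((\bs 1_t\otimes I_l)\wtd C(\wtd A-I)^{-1}+\toepL_{l\times l}\Bigl(\begin{bmatrix}0\\\bs 1_{t-1}\otimes I_l\end{bmatrix}\Bigr)\wtd V_t\Bigr),
\end{equation*}
which follow from summing the geometric-type stacks $\wtd C\wtd A^{j}(\wtd A-I)^{-1}\wtd B=\wtd C\wtd A^{j-1}\wtd B+\dots+\wtd C\wtd B+Y_\gamma$. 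Substituting these into $\op C(X_t)-C_t^{\T}C_t$ and cancelling requires one further structural fact, the commutation $\toepL_{l\times l}\bigl(\begin{bmatrix}0\\\bs 1_{t-1}\otimes I_l\end{bmatrix}\bigr)\wtd T_t=\wtd T_t\toepL_{m\times m}\bigl(\begin{bmatrix}0\\\bs 1_{t-1}\otimes I_m\end{bmatrix}\bigr)$, which is what makes the quadratic terms collapse. These three identities are exactly where the factor $\bs 1_t^{\T}\otimes I_l$ comes from; asserting that it ``appears because every block-row contributes the same term'' does not establish it, and without these identities neither your inductive step nor a direct expansion closes.

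Your alternative route via \cref{lm:incorp} has an additional logical problem: in the paper, the identification of one DRE step with one incorporation step (\cref{thm:equiv:care}) is stated as a \emph{consequence} of \cref{thm:incorp:care}, since one must first know the rank factorization $\op C(X_t)=C_t^{\T}C_t$ before the shifted CARE with data $(A-BB^{\T}X_t,B,C_t)$ is even well defined. Using that equivalence to prove the factorization is circular unless you independently prove that the DRE step coincides with the RADI/incorporation step, which again reduces to the same block-Toeplitz computation. The correct reading is that your first (inductive) route could in principle be completed, but only by proving the displayed identities above, at which point the induction is superfluous and one is back to the paper's direct verification.
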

\begin{proof}
Clearly,
\begin{align*}
	\op C(X_t)
	&=A^{\T}X_t+X_tA+C^{\T}C-X_tBB^{\T}X_t
	\\&=
	\begin{multlined}[t]
	[\gamma I+2\gamma(\wtd A-I)^{-\T}]X_t+X_t[\gamma I+2\gamma(\wtd A-I)^{-1}]
	+2\gamma (\wtd A-I)^{-\T}\wtd C^{\T}\wtd C(\wtd A-I)^{-1}
	\\-2\gamma X_t(\wtd A-I)^{-1}\wtd B\wtd B^{\T}(\wtd A-I)^{-\T}X_t
	\end{multlined}
	\\&= 
	2\gamma \left[
	(\wtd A-I)^{-\T}\wtd C^{\T}\wtd C(\wtd A-I)^{-1}+(\wtd A-I)^{-\T}X_t+X_t(\wtd A-I)^{-1}+X_t 
	-X_t(\wtd A-I)^{-1}\wtd B\wtd B^{\T}(\wtd A-I)^{-\T}X_t
	\right]
	.
\end{align*}
Note that by \cref{eq:noniter:X:X:care}
\begin{align*}
	X_t(\wtd A-I)^{-1}\wtd B
	&=\wtd V_t^{\T}(I+\wtd T_t\wtd T_t^{\T})^{-1}\wtd V_t(\wtd A-I)^{-1}\wtd B
	\\&=\wtd V_t^{\T}(I+\wtd T_t\wtd T_t^{\T})^{-1}\begin{bmatrix}
			\wtd C \\ \wtd C\wtd A \\ 
			\vdots  \\ \wtd C\wtd A^{t-1}
	\end{bmatrix}(\wtd A-I)^{-1}\wtd B
	\\&=\wtd V_t^{\T}(I+\wtd T_t\wtd T_t^{\T})^{-1}\begin{bmatrix}
			Y_\gamma  \\ \wtd C\wtd B+Y_\gamma  \\ 
			\vdots  \\ \wtd C\wtd A^{t-2}\wtd B+\dots+\wtd C\wtd B+Y_\gamma 
	\end{bmatrix}
	\\&=\wtd V_t^{\T}(I+\wtd T_t\wtd T_t^{\T})^{-1}\wtd T_t\begin{bmatrix}
		I _m\\ I _m\\ \vdots \\I_m
	\end{bmatrix}
	=\wtd V_t^{\T}(I+\wtd T_t\wtd T_t^{\T})^{-1}\wtd T_t(\bs 1_t\otimes I_m)
	,
\end{align*}
and
\begin{align*}
	X_t(\wtd A-I)^{-1}
	&=\wtd V_t^{\T}(I+\wtd T_t\wtd T_t^{\T})^{-1}\begin{bmatrix}
			\wtd C \\ \wtd C\wtd A \\ \vdots \\ \wtd C\wtd A^{t-1}
	\end{bmatrix}(\wtd A-I)^{-1}
	\\&=\wtd V_t^{\T}(I+\wtd T_t\wtd T_t^{\T})^{-1}\begin{bmatrix}
			\wtd C(\wtd A-I)^{-1} \\ \wtd C+\wtd C(\wtd A-I)^{-1} \\ \vdots \\ \wtd C\wtd A^{t-2}+\dots+\wtd C+\wtd C(\wtd A-I)^{-1}
	\end{bmatrix}
	\\&=\wtd V_t^{\T}(I+\wtd T_t\wtd T_t^{\T})^{-1}\left(  (\bs 1_t\otimes I_l)\wtd C(\wtd A-I)^{-1}+\toepL_{l\times n}\left(\begin{bmatrix}
			0\\\wtd V_{t-1}
\end{bmatrix}\right)(\bs 1_t\otimes I_l)\right)
	\\&=\wtd V_t^{\T}(I+\wtd T_t\wtd T_t^{\T})^{-1}\left(  (\bs 1_t\otimes I_l)\wtd C(\wtd A-I)^{-1}+\toepL_{l\times l}\left(\begin{bmatrix}
		0\\\bs 1_{t-1}\otimes I_l
\end{bmatrix}\right)\wtd V_t\right)
,
\end{align*}
of which the last equality is guaranteed by
\[
\toepL_{l\times n}\left(\begin{bmatrix}
			0\\\wtd V_{t-1}
\end{bmatrix}\right)(\bs 1_t\otimes I_l)
=\begin{bmatrix}
	0\\ \wtd C\\ \wtd C\wtd A+\wtd C\\\vdots\\\wtd C\wtd A^{t-2}+\dots+\wtd C
\end{bmatrix}=
\toepL_{l\times l}\left(\begin{bmatrix}
		0\\\bs 1_{t-1}\otimes I_l
\end{bmatrix}\right)\wtd V_t
.
\]
Hence
\begin{align*}
	\frac{1}{2\gamma}\left(\op C(X_t)-C_t^{\T}C_t\right)
	&=
	\begin{multlined}[t]
		\ul{(\wtd A-I)^{-\T}\wtd C^{\T}\wtd C(\wtd A-I)^{-1}}
		\\+\ul{(\wtd A-I)^{-\T}\wtd C^{\T}(\bs 1_t^{\T}\otimes I_l)(I+\wtd T_t\wtd T_t^{\T})^{-1}\wtd V_t}
+\wtd V_t^{\T}\toepL_{l\times l}\left(\begin{bmatrix}
		0\\\bs 1_{t-1}\otimes I_l
\end{bmatrix}\right)^{\T}(I+\wtd T_t\wtd T_t^{\T})^{-1}\wtd V_t
\\+\ul{\wtd V_t^{\T}(I+\wtd T_t\wtd T_t^{\T})^{-1}(\bs 1_t\otimes I_l)\wtd C(\wtd A-I)^{-1}}
+\wtd V_t^{\T}(I+\wtd T_t\wtd T_t^{\T})^{-1}\toepL_{l\times l}\left(\begin{bmatrix}
		0\\\bs 1_{t-1}\otimes I_l
\end{bmatrix}\right)\wtd V_t
\\+\wtd V_t^{\T}(I+\wtd T_t\wtd T_t^{\T})^{-1}\wtd V_t  -\wtd V_t^{\T}(I+\wtd T_t\wtd T_t^{\T})^{-1}\wtd T_t(\bs 1_t\otimes I_m)(\bs 1_t^{\T}\otimes I_m)\wtd T_t^{\T}(I+\wtd T_t\wtd T_t^{\T})^{-1}\wtd V_t
\\-\ul{(\wtd A-I)^{-\T}\wtd C^{\T}\wtd C(\wtd A-I)^{-1}}
	-\wtd V_t^{\T}(I+\wtd T_t\wtd T_t^{\T})^{-1}(\bs 1_t\otimes I_l)(\bs 1_t^{\T}\otimes I_l)(I+\wtd T_t\wtd T_t^{\T})^{-1}\wtd V_t
	\\-\ul{(\wtd A-I)^{-\T}\wtd C^{\T}(\bs 1_t^{\T}\otimes I_l)(I+\wtd T_t\wtd T_t^{\T})^{-1}\wtd V_t}
	-\ul{\wtd V_t^{\T}(I+\wtd T_t\wtd T_t^{\T})^{-1}(\bs 1_t\otimes I_l)\wtd C(\wtd A-I)^{-1}}
	\end{multlined}
	\\&=
	\begin{multlined}[t]
\wtd V_t^{\T}\toepL_{l\times l}\left(\begin{bmatrix}
		0\\\bs 1_{t-1}\otimes I_l
\end{bmatrix}\right)^{\T}(I+\wtd T_t\wtd T_t^{\T})^{-1}\wtd V_t
+\wtd V_t^{\T}(I+\wtd T_t\wtd T_t^{\T})^{-1}\toepL_{l\times l}\left(\begin{bmatrix}
		0\\\bs 1_{t-1}\otimes I_l
\end{bmatrix}\right)\wtd V_t
\\+\wtd V_t^{\T}(I+\wtd T_t\wtd T_t^{\T})^{-1}\wtd V_t  -\wtd V_t^{\T}(I+\wtd T_t\wtd T_t^{\T})^{-1}\wtd T_t(\bs 1_t\otimes I_m)(\bs 1_t^{\T}\otimes I_m)\wtd T_t^{\T}(I+\wtd T_t\wtd T_t^{\T})^{-1}\wtd V_t
\\
	-\wtd V_t^{\T}(I+\wtd T_t\wtd T_t^{\T})^{-1}(\bs 1_t\otimes I_l)(\bs 1_t^{\T}\otimes I_l)(I+\wtd T_t\wtd T_t^{\T})^{-1}\wtd V_t
	\end{multlined}
	\\&=
	\begin{multlined}[t]
		\wtd V_t^{\T}(I+\wtd T_t\wtd T_t^{\T})^{-1}\Bigg[
			(I+\wtd T_t\wtd T_t^{\T})\toepL_{l\times l}\left(\begin{bmatrix}
		0\\\bs 1_{t-1}\otimes I_l
\end{bmatrix}\right)^{\T}
+\toepL_{l\times l}\left(\begin{bmatrix}
		0\\\bs 1_{t-1}\otimes I_l
\end{bmatrix}\right)(I+\wtd T_t\wtd T_t^{\T})
\\+(I+\wtd T_t\wtd T_t^{\T})  -\wtd T_t(\bs 1_t\otimes I_m)(\bs 1_t^{\T}\otimes I_m)\wtd T_t^{\T}
	-(\bs 1_t\otimes I_l)(\bs 1_t^{\T}\otimes I_l)
		\Bigg](I+\wtd T_t\wtd T_t^{\T})^{-1}\wtd V_t
	\end{multlined}
	\\&=
	\begin{multlined}[t]
		\wtd V_t^{\T}(I+\wtd T_t\wtd T_t^{\T})^{-1}\Bigg[
			\wtd T_t\wtd T_t^{\T}\toepL_{l\times l}\left(\begin{bmatrix}
		0\\\bs 1_{t-1}\otimes I_l
\end{bmatrix}\right)^{\T}
+\toepL_{l\times l}\left(\begin{bmatrix}
		0\\\bs 1_{t-1}\otimes I_l
\end{bmatrix}\right)\wtd T_t\wtd T_t^{\T}
\\+\wtd T_t\wtd T_t^{\T}  -\wtd T_t(\bs 1_t\otimes I_m)(\bs 1_t^{\T}\otimes I_m)\wtd T_t^{\T}
		\Bigg](I+\wtd T_t\wtd T_t^{\T})^{-1}\wtd V_t
	\end{multlined}
	\\&=0
	,
\end{align*}
of which the last equality holds for
\begin{equation*}\label{eq:toepL:commute}
	\toepL_{l\times l}\left(\begin{bmatrix}
			0\\\bs 1_{t-1}\otimes I_l
	\end{bmatrix}\right)\wtd T_t=\wtd T_t\toepL_{m\times m}\left(\begin{bmatrix}
			0\\\bs 1_{t-1}\otimes I_m
	\end{bmatrix}\right)
	.
	\qedhere
\end{equation*}
\end{proof}
According to \cref{thm:incorp:care}, we are able to make incorporation easily and solve \cref{eq:incorp}.
One thing worth mentioning is that $C_t$ is not difficult to calculate.
Putting \cref{eq:I+TT:inv:care} into \cref{eq:thm:incorp:care},
\begin{align*}
	C_t=C+
	\sqrt{2\gamma}
			\begin{bmatrix}
				\Xi_{1,I} \\  \Xi_{2,I}
			\end{bmatrix}^{\T}\begin{bmatrix}
				 (I_{t}\otimes Q_1)^{-1}&\\&(I_{t}\otimes \left[W+WY_\gamma ^{\T}Y_\gamma W\right])^{-1}
			\end{bmatrix}\begin{bmatrix}
				 \Xi_1 \\  \Xi_2
			\end{bmatrix},
\end{align*}
where $\Xi_1,\Xi_2$ is defined as in \cref{eq:Xt=SDS:care},
and
	\[
		\Xi_{1,I} = \toepU_{l\times l}\left(\begin{bmatrix}
				Q_2\\Q_1\\
		\end{bmatrix}\right)^{\T}(\bs 1_t\otimes I_l),
		\qquad
		\Xi_{2,I}=\toepU_{l\times m}\left(\begin{bmatrix}
				Q_3\\0\\
		\end{bmatrix}\right)^{\T}(\bs 1_t\otimes I_l)
		.
	\]

\Cref{thm:incorp:care} gives the detailed form of \cref{eq:incorp} at $\wtd X=X_t$.
Note that the sequence $\set{X_t}$ can be generated by the RADI method introduced in \cite{bennerBKS2018radi}, if the initial approximation is $0$ and the same shift $\gamma_t=\gamma$ is used at each step.
If we make incorporation at $t=1$ in each iteration, then the process is actually the RADI method.
As a direct consequence, we have the following result.
\begin{theorem}\label{thm:equiv:care}
For $X_t$ defined by DRE \cref{eq:ddre:care} and $C_t$ defined by \cref{eq:thm:incorp:care},
\[
	\Delta_t:=X_{t+1}-X_t
	=2\gamma(C_tA_{t,\gamma}^{-1})^{\T}(I+C_tA_{t,\gamma}^{-1}BB^{\T}A_{t,\gamma}^{-\T}C_t^{\T})^{-1}C_tA_{t,\gamma}^{-1},
	\quad 
	A_{t,\gamma}=A-BB^{\T}X_t-\gamma I,
\]
or equivalently, $\Delta_t$ is the approximate solution to \cref{eq:incorp} at $\wtd X=X_t$ generated by DRE \cref{eq:ddre:care} on $t=1$.
\end{theorem}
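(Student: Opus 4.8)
The plan is to recognize the displayed quantity as one step of the FTA construction applied to the incorporation equation \cref{eq:incorp} at $\wtd X=X_t$, and then to match that step with $X_{t+1}-X_t$. By the first assertion of \cref{lm:incorp}, $\Delta:=X_\star-X_t$ solves \cref{eq:incorp}; by \cref{thm:incorp:care} the residual factors as $\op C(X_t)=C_t^{\T}C_t$, so \cref{eq:incorp} at $\wtd X=X_t$ is exactly the CARE with data $(\ol A_t,B,C_t)$, where $\ol A_t:=A-BB^{\T}X_t$. Running the Cayley transform of \cref{ssec:fna-and-its-equivalence-to-many-other-methods} on $(\ol A_t,B,C_t)$ with the same shift $\gamma$ yields $\what A_\gamma'=\ol A_t-\gamma I=A_{t,\gamma}$, $\wtd C'=\sqrt{2\gamma}\,C_tA_{t,\gamma}^{-1}$, $Y_\gamma'=C_tA_{t,\gamma}^{-1}B$, and \cref{thm:fixedpoint:care} at $t=1$ (where $\wtd V_1=\wtd C'$ and $\wtd T_1=Y_\gamma'$) identifies the first iterate of the corresponding DRE \cref{eq:ddre:care} started from $0$ as $(\wtd C')^{\T}(I+Y_\gamma'(Y_\gamma')^{\T})^{-1}\wtd C'$. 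Substituting the formulas for $\wtd C'$ and $Y_\gamma'$ gives precisely $2\gamma(C_tA_{t,\gamma}^{-1})^{\T}(I+C_tA_{t,\gamma}^{-1}BB^{\T}A_{t,\gamma}^{-\T}C_t^{\T})^{-1}C_tA_{t,\gamma}^{-1}$, and a push-through rearrangement (\cref{eq:smwf}/\cref{eq:easy}) rewrites it as $2\gamma\,A_{t,\gamma}^{-\T}\big(I+\op C(X_t)A_{t,\gamma}^{-1}BB^{\T}A_{t,\gamma}^{-\T}\big)^{-1}\op C(X_t)\,A_{t,\gamma}^{-1}$, which in particular shows the right-hand side is independent of the rank factor chosen for $\op C(X_t)$. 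This settles the ``or equivalently'' clause.

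It remains to prove that $X_{t+1}-X_t$ equals this quantity. The crux is the pointwise ``one Cayley/doubling step $=$ one defect-correction step'' identity: writing $\op D$ for the fixed-point map of \cref{eq:ddre:care} (equivalently $\op D(X)=H_0+A_0^{\T}X(I+G_0X)^{-1}A_0$ as in \cref{eq:fixedpoint:care}) and $A_{X,\gamma}:=A-BB^{\T}X-\gamma I$, one has
\[
	\op D(X)-X=2\gamma\,A_{X,\gamma}^{-\T}\big(I+\op C(X)A_{X,\gamma}^{-1}BB^{\T}A_{X,\gamma}^{-\T}\big)^{-1}\op C(X)\,A_{X,\gamma}^{-1}
\]
for every symmetric $X$ making $A_{X,\gamma}$ nonsingular. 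Granting this, take $X=X_t$: then $A_{X_t,\gamma}=A_{t,\gamma}$ is nonsingular because $I+G_0X_t=I+B_\gamma B_\gamma^{\T}X_t$ is (recorded in \cref{ssec:fna-and-its-equivalence-to-many-other-methods}), and inserting $\op C(X_t)=C_t^{\T}C_t$ from \cref{thm:incorp:care} turns the identity into the asserted formula for $\Delta_t=X_{t+1}-X_t$, while the ``or equivalently'' description was already obtained. To prove the identity one expands $\op D(X)$ using the explicit expressions for $A_0,G_0,H_0$ in terms of $\what A_\gamma=A-\gamma I$ and $K_\gamma=\what A_\gamma^{\T}+C^{\T}C\what A_\gamma^{-1}BB^{\T}$ recorded in \cref{ssec:fna-and-its-equivalence-to-many-other-methods}, substitutes $A=\what A_\gamma+\gamma I$ and $\op C(X)=A^{\T}X+XA+C^{\T}C-XBB^{\T}X$, factors $I+G_0X$, and pushes the resulting inverses across $K_\gamma$ via repeated use of \cref{eq:smwf} and \cref{eq:easy}; at $X=0$ it collapses to the defining relations $H_0=2\gamma K_\gamma^{-1}C^{\T}C\what A_\gamma^{-1}$ and $K_\gamma=\what A_\gamma^{\T}+C^{\T}C\what A_\gamma^{-1}BB^{\T}$ of the Cayley data, which is reassuring. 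Alternatively one may simply invoke the equivalence with the RADI method recalled in \cref{ssec:fna-and-its-equivalence-to-many-other-methods}, which is this identity applied iteratively from $X_0=0$.

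The bookkeeping in the first paragraph and the final substitution are routine, so the main obstacle is the $\op D(X)-X$ identity. The difficulty is structural: $\op D$ is defined through the Cayley-transformed triple $(A_\gamma,B_\gamma,C_\gamma)$, which entangles $A$, $B$, $C$ and $\gamma$ nonlinearly, whereas the target formula is organized around $A_{X,\gamma}=A-BB^{\T}X-\gamma I$ and the residual $\op C(X)$; bridging the two is a genuine, if finite, Woodbury computation whose heart is the clean factorization of $I+G_0X$ and the passage of inverses across $K_\gamma$. For completeness I would also note that $A_{t,\gamma}$ is invertible throughout the range of $t$ in question, which is automatic from the nonsingularity of $I+G_kH_k$ (equivalently of $I+G_0X_t$) already established for the SDA/FTA iterates.
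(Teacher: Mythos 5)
Your proposal is correct and follows essentially the same route as the paper, which presents the theorem as a direct consequence of \cref{thm:incorp:care} (giving $\op C(X_t)=C_t^{\T}C_t$) together with the equivalence, recalled in \cref{ssec:fna-and-its-equivalence-to-many-other-methods}, between the constant-shift DRE iteration and the RADI method (i.e., incorporation with one DRE step at each stage). Your explicit identification of the displayed formula as the $t=1$ iterate of \cref{thm:fixedpoint:care} applied to the incorporation CARE $(\ol A_t,B,C_t)$ is a correct unpacking of what the paper leaves implicit, and your fallback to the RADI equivalence for the increment identity is exactly the paper's argument.
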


\section{Experiments and discussions}\label{sec:experiments-and-discussions}
In this section, we will provide several examples to illustrate the new algorithm FTA and compare it with some existing methods.
As is stated in \cref{ssec:fna-and-its-equivalence-to-many-other-methods}, many methods solve a CARE through an equivalent DARE.
Hence here we only test the CARE of the form
\[
	A^{\T}X+XA-XBB^{\T}X+C^{\T}C=0, \qquad A\in \mathbb{R}^{n\times n}, B\in \mathbb{R}^{n\times m}, C\in \mathbb{R}^{l\times n},
\]
for the performance of the methods on CARE can be recognized as those on DARE.
We will use these methods in the tests: 
\begin{itemize}
	\item FTA: our FFT-based Toeplitz-structured approximation with the incorporation technique;
	\item RKSM: rational Krylov subspace method \cite{guttel2013rational,druskinS2011adaptive,simonciniSM2014two};
	\item RADI+opt: RADI method \cite{bennerBKS2018radi}, with the residual minimizing shifts;
	\item RADI+proj: RADI method with the  residual Hamiltonian shifts;
	\item NK-ADI+GP: the Galerkin projected variant of Newton-Kleinman ADI method \cite{bennerHSW2016inexact,bennerLP2008numerical,bennerS2013numerical,bennerS2010newtongalerkinadi};
	\item iNK-ADI+LS: the inexact variant of Newton-Kleinman ADI method with line search.
\end{itemize}

All experiments are done in MATLAB 2021a under the Windows 10 Professional 64-bit operating system on a PC with a Intel Core i7-8700 processor at 3.20GHz and 64GB RAM. 
The implementation of RKSM comes from the source codes from Simoncini's  homepage\footnote{\url{http://www.dm.unibo.it/~simoncin/software.html}} with some modifications;
we use corresponding functions in the package M-M.E.S.S.\ version 2.1 \cite{SaaKB21-mmess-2.1} as the implementations of the last four methods.

The methods are intentionally chosen: RADI is the recommended method by the package M-M.E.S.S., and it is usually one of the fastest methods among the non-projective methods, and two different shift selection strategies are used, for there does not exist a definitely good one and both strategies are good in many tests;
the two variants of NK-ADI are Newton-type methods; RKSM is a projection method.
On the other hand, FTA, SDA, RADI, quadratic ADI and Cayley transformed Hamiltonian subspace iteration are theoretically equivalent if the shifts are the same;
the last three are of the same type, so only one of them, namely RADI, is chosen; SDA is appropriate for small-to-mid scale dense problems, so we give up putting it into comparison.

Since classical performance indices behave very different in different methods, we directly use the accuracy vs.\ the running time to compare.
The accuracy is measured by
\[
	\nres(X):=\frac{\N{\op C(X)}_{\F}}{\N{\op C(0)}_{\F}}=
	\frac{\N{A^{\T}X+XA-XBB^{\T}X+C^{\T}C}_{\F}}{\N{C^{\T}C}_{\F}}.
\]
In the following, three examples are tested, where the results are shown in \cref{fig:}.

For the parameters,
in FTA, we choose some $\gamma$ and use $t=64$ in one round, and then do incorporation with another $\gamma$ and $t=64$ until the convergence.
Since $\gamma$ is a shift on the matrix $A$, the choice of $\gamma$ with the same magnitude of the matrix $A$ would perform well.
In the following examples, we use a positive number consisting of one random digit and a hand-picked magnitude as $\gamma$. For instance,
$\gamma=8\times 10^{-7}$ is used in \cref{eg:rail}, where $8$ is random chosen and $-7$ is hand-picked (in fact picked according to the choice in SDA).
In our tests, the digit is not as important as the magnitude on the convergence speed;
for example, in \cref{eg:rail} FTA with $\gamma=5,6,7,8,9,10\times 10^{-7}$ performs nearly the same.
In addition, since FTA share the same theoretical convergence with SDA, the strategy of choosing $\gamma$ in SDA (see, e.g., \cite[Section~5.5]{huangLL2018structurepreserving}) should work in FTA, so should the sensitivity to $\gamma$.

	Each of the other five methods has its own way to choose shifts, so we leave the task for their own.
	Similar arguments apply for the following examples.

\begin{figure}[htp]
	\centering
	{\includegraphics[angle=90,width=0.415\linewidth]{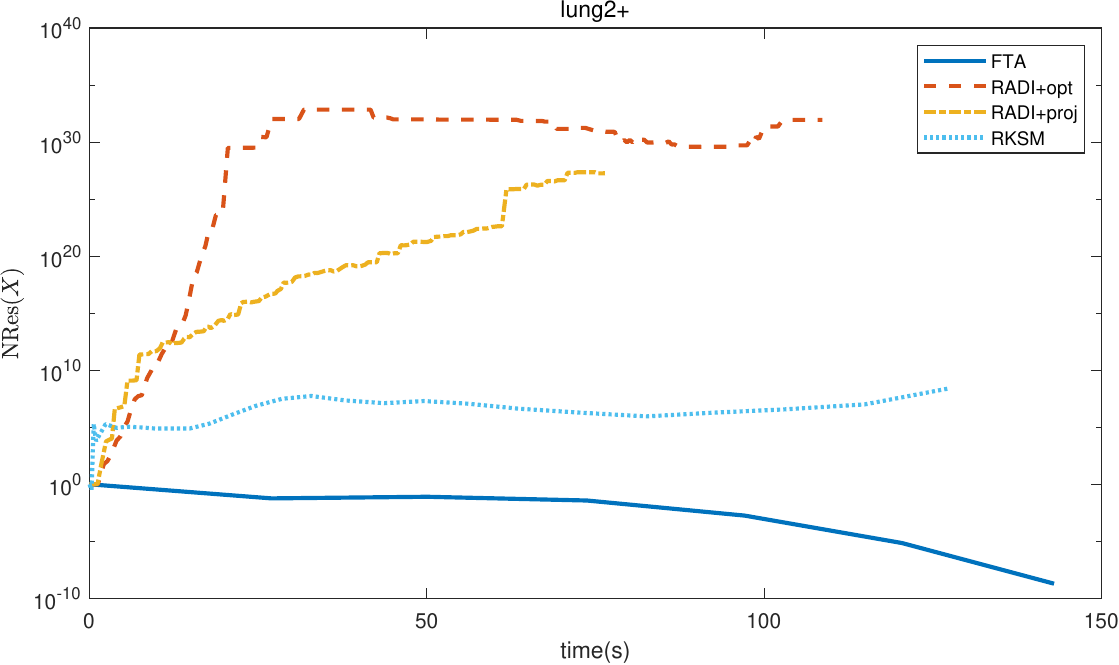}}
	{\includegraphics[angle=90,width=0.415\linewidth]{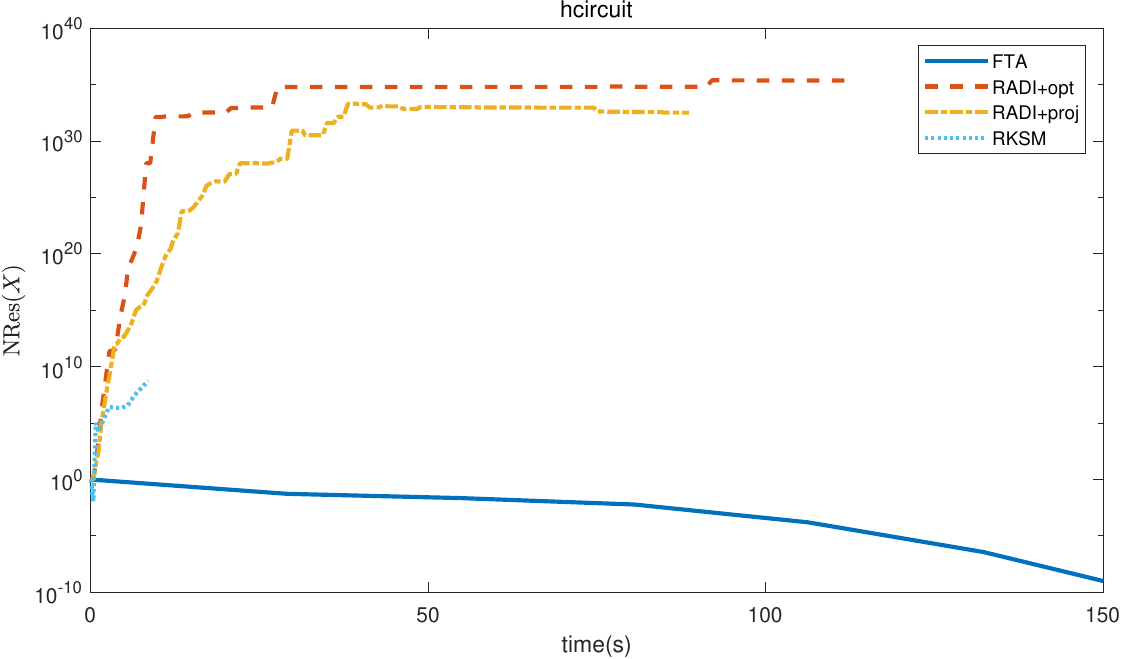}}

	{\includegraphics[angle=90,width=0.415\linewidth]{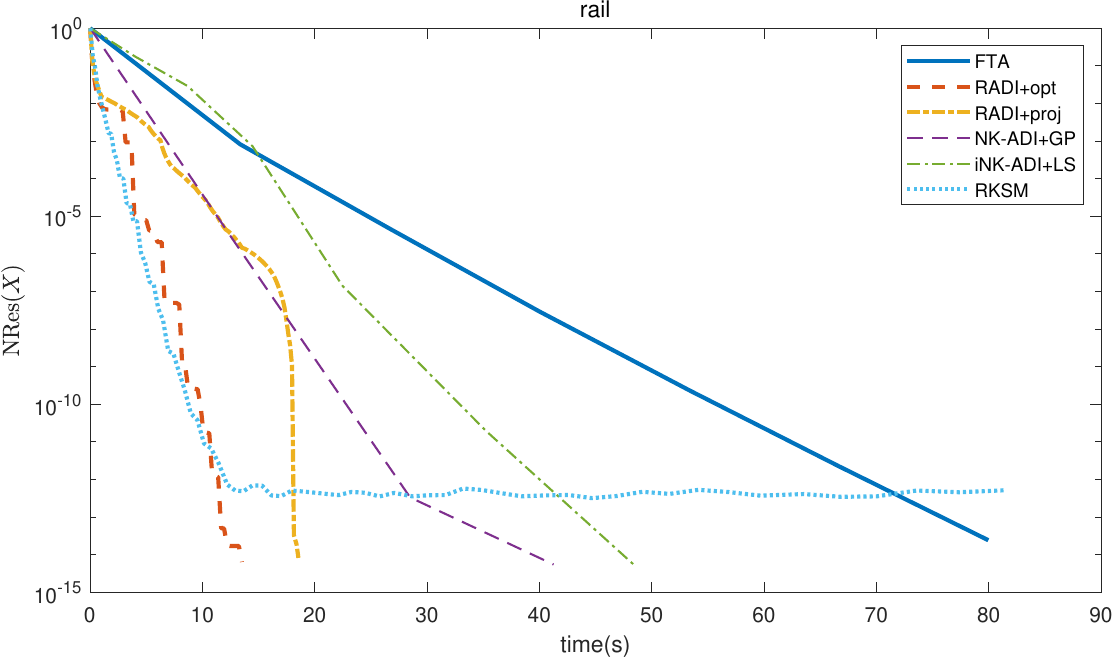}}
	{\includegraphics[angle=90,width=0.415\linewidth]{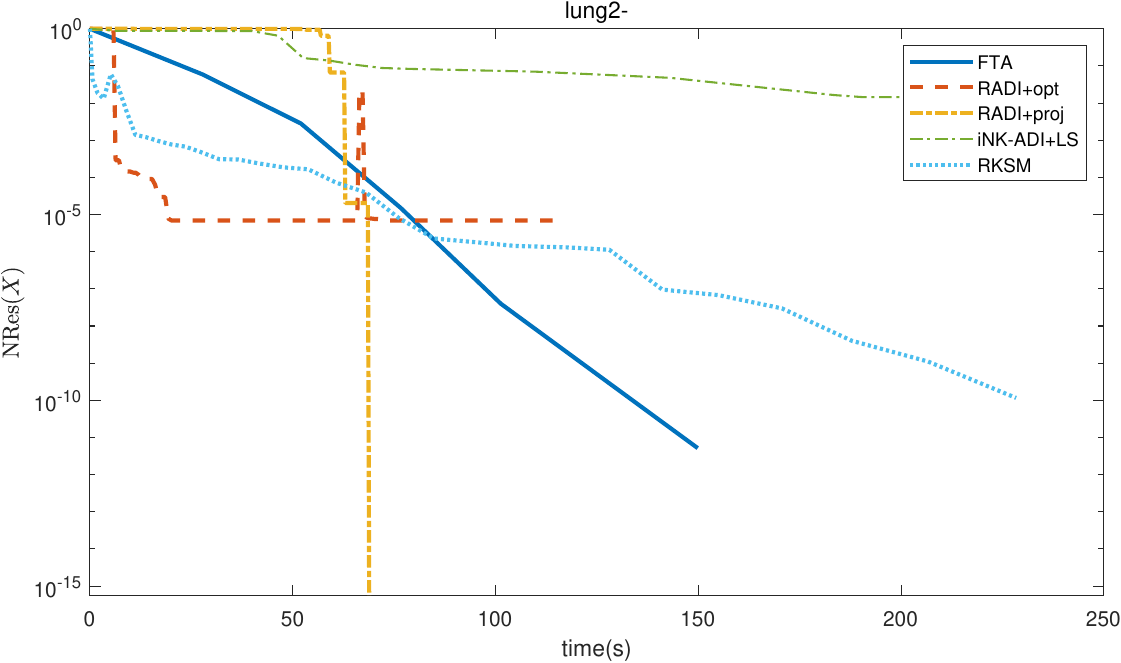}}

	\caption{accuracy vs.\ time}
	\label{fig:}
\end{figure}
\begin{example}[Rail]\label{eg:rail}
	The example is a version of the steel profile cooling model from the Oberwolfach Model Reduction Benchmark Collection, hosted at MORwiki \cite{morwiki_steel}.
	The data include $A\preceq0, E\succeq0, B,C$ with $n=79841,m=7,l=6$.
	Since we only focus on solving the CARE, $E$ is simply dropped.

	For the parameters, in FTA, we use a heuristic shift $\gamma=8\times 10^{-7}$ and in each incorporation step $\gamma\leftarrow \gamma/1.01$.

	In this example $A\prec 0$ and thus c-stable, which implies the properties of this problem are good. This results in the fact that all methods converge.
	We can see that the FTA is the slowest one among all the six methods.
	This phenomenon is reasonable. FTA and RADI are theoretically equivalent, while the only difference is that RADI has much more chances to choose different shifts to accelerate its convergence.
	Good shifts largely accelerate its convergence, and on the opposite, bad shifts would slow it down.
	RKSM and NK-ADI also benefit from the choice of shifts.
\end{example}
\begin{example}[Lung2$+$]\label{eg:lung2+}
	The example is generated in this way: $A$ is the matrix \texttt{lung2} in the SuiteSparse Matrix Collection \cite{davisH2011university} (formerly the University of Florida Sparse Matrix Collection), modelling temperature and water vapor transport in the human lung; $B,C$ are generated by MATLAB function \texttt{rand}.
	Here $n=109460,m=10,l=10$.

	For the parameters, in FTA, we use a heuristic shift $\gamma=5\times 10^{3}$ and in each incorporation step $\gamma\leftarrow \gamma/1.01$.

	In this example $A$ is nonsymmetric and the eigenvalues of $A$ lie in the right half plane, namely $A$ is c-anti-stable, or $-A$ is c-stable. 
	None of RADI+opt, RADI+proj, and RKSM converges.
	NK-ADI+GP and iNK-ADI+LS both report that non-stable Ritz values were detected and terminated the process.
	Only FTA produces a good approximate solution.
	This tells that the other five methods strongly rely on the stability of $A$.
	For example, a sufficient condition for achieving the convergence is that $A$ is stable and the shifts $\gamma_k$ satisfy the non-Blaschke condition
	$\sum_{k=1}^{+\infty}\frac{\Re(\gamma_k)}{1+\gamma_k\ol \gamma_k}=-\infty$ \cite{massoudiOR2016analysis}.
	However, the FTA works well even for the case that $A$ is not stable, which implies that in this sense the FTA is more robust with respect to the spectrum of $A$.
\end{example}
\begin{example}[Lung2$-$]\label{eg:lung2-}
	The example is almost the same with \cref{eg:lung2+} except that the matrix \texttt{lung2} is used as $-A$ rather than $A$. 

	For the FTA, we still use a heuristic shift $\gamma=5\times 10^{3}$ and in each incorporation step $\gamma\leftarrow \gamma/1.01$.

	In this example $A$ is nonsymmetric and c-stable.
	Note that the only difference between RADI+opt and RADI+proj is the different shift selection strategies.
	RADI+opt tends to converge fast but finally stays at a low accuracy;
	RADI+proj becomes convergent very late but soonly converges in a very short time.
	The phenomenon illustrates that the choice of shifts fatally affects its speed of convergence.
	The iNK-ADI+LS converges very slowly, while the NK-ADI+GP reports that non-stable Ritz values were detected again.
	The RKSM converges in a fairly good speed but finally slow down.
	The FTA converges steadily in a predictable speed.

	In another view, compared with \cref{eg:lung2+}, the running time of the FTA is nearly the same for different $A$'s, so the running time is predictable and can be estimated in advance.

\end{example}
\begin{example}[Hcircuit]\label{eg:hcircuit}
	The example is generated in this way: $A$ is the matrix \texttt{hcircuit} in the SuiteSparse Matrix Collection \cite{davisH2011university} (formerly the University of Florida Sparse Matrix Collection), modelling a circuit without parasitics; $B,C$ are generated by MATLAB function \texttt{rand}.
	Here $n=105676,m=10,l=10$.

	For the parameters, in FTA, we use a heuristic shift $\gamma=5\times 10^{3}$ and in each incorporation step $\gamma\leftarrow \gamma/1.01$.

	In this example $A$ is nonsymmetric and the real eigenvalues of $A$ lie in $[-1,86.3]$, namely $A$ is neither c-stable nor c-anti-stable, or equivalently neither of $\pm A$ is c-stable. 
	None of RADI+opt, RADI+proj, and RKSM converges,
	and RKSM terminiates in the midway, reporting that the projected Riccati equation does not have a finite solution.
	NK-ADI+GP and iNK-ADI+LS both report that non-stable Ritz values were detected and terminates the process.
	Only FTA produces a good approximate solution.
	This tells that the other five methods strongly rely on the stability of $A$, as is illustrated in \cref{eg:lung2+}.
\end{example}

Summarizing the numerical results, we see that the FTA has two significant features:
\begin{enumerate}
	\item FTA is robust in some sense and it converges no matter how the property of $A$ is;
	\item FTA has a steady convergence rate and the execution time is predictable, although in good cases it may converge slowly compared with other methods.
\end{enumerate}
Moreover, it is easy to see that if $A$ is dense, FTA needs the LU/PLU factorization only several times, while the other methods need as many as number of iterations, according to the number of used shifts.
\section{Conclusion}\label{sec:conclusions}
We have presented our FFT-based Toeplitz-structured approximation method for computing the stabilizing solution of large-scale algebraic Riccati equations with low-rank structure.
It is shown that the closed form given by operator theory under good assumptions is also valid for the general case, which is proved by matrix analysis.
It is quite natural to ask  whether the closed form can be directly produced by the analysis of unbounded linear operators, which would be a difficult task for future work.
On the numerical front, our method works robust in some sense and few parameters are needed.
As the readers may see, there is still possibility to improve the behavior by adopting more techniques. However, to keep this paper compact and concentrated, we leave it for another work.

\appendix
\section{
Displacement rank and Toeplitz matrix}\label{ssec:displacement-rank-and-toeplitz-matrix}

In order to prove \cref{lm:form-of-inverse:dare},
we first give a few results to the displacement rank and Toeplitz matrices,
and interested readers are referred to the review paper~\cite{kailathS1995displacement} and the references therein. 

For any matrix $R\in \mathbb{R}^{pn\times pn}$, its \emph{$(\pm)$-displacement rank} $\alpha_{\pm}(R,p)$ with respect to block size $p\times p$, is defined by 
\[
	\alpha_+(R,p):=\rank_p(R-Z_{n,p}RZ_{n,p}^{\T}),\qquad
	\alpha_-(R,p):=\rank_p(R-Z_{n,p}^{\T}RZ_{n,p}),\qquad
\]
 where $Z_{n,p}=\begin{bmatrix}
	0 & 0 \\ I_{(n-1)p} & 0
\end{bmatrix}_{pn\times pn}$, and $\rank_p(\cdot)$ is considered as the rank of the linear transformation on the module $\mathbb{R}^{np\times p}$ over the ring $\mathbb{R}^{p\times p}$. For the case $p=1$, $\rank_p(\cdot)=\rank(\cdot)$, the ordinary rank of matrices in $\mathbb{R}^{n\times n}$.

The definition is based on the following result, namely \cref{lm:equiv-expression}.
\begin{lemma}[{\cite{kailathKM1979displacement}}]\label{lm:equiv-expression}
\begin{subequations}\label{eq:lm:equiv-expression}
	Given $R_1,R_2\in \mathbb{R}^{pn\times p}$ and $R\in \mathbb{R}^{pn\times pn}$, then
	\begin{align*}
		R-Z_{n,p}RZ_{n,p}^{\T} = R_1R_2^{\T} &\iff R=\toepL_{p\times p}(R_1)\toepL_{p\times p}(R_2)^{\T},
		\\
		R-Z_{n,p}^{\T}RZ_{n,p} = R_1R_2^{\T} &\iff R=\toepU_{p\times p}(R_1)\toepU_{p\times p}(R_2)^{\T}.
	\end{align*}
\end{subequations}
\end{lemma}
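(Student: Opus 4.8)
The plan is to recognize both asserted equivalences as statements about a single linear operator being invertible. Introduce $\nabla_+(R):=R-Z_{n,p}RZ_{n,p}^{\T}$ and $\nabla_-(R):=R-Z_{n,p}^{\T}RZ_{n,p}$, regarded as linear maps on $\mathbb{R}^{pn\times pn}$. Once I know each $\nabla_\pm$ is bijective, the whole statement reduces to a single direction: that $\toepL_{p\times p}(R_1)\toepL_{p\times p}(R_2)^{\T}$ is sent by $\nabla_+$ to $R_1R_2^{\T}$, and $\toepU_{p\times p}(R_1)\toepU_{p\times p}(R_2)^{\T}$ is sent by $\nabla_-$ to $R_1R_2^{\T}$; the ``$\Rightarrow$'' direction is then forced by uniqueness of preimages.

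First I would establish injectivity of $\nabla_\pm$ from nilpotency of the shift. Since $Z_{n,p}$ shifts block rows down by one and kills the top block, $Z_{n,p}^{\,n}=0$, and hence also $(Z_{n,p}^{\T})^{\,n}=0$. Thus $\nabla_+(R)=0$ forces $R=Z_{n,p}RZ_{n,p}^{\T}$, and iterating, $R=Z_{n,p}^{\,n}R(Z_{n,p}^{\T})^{\,n}=0$; the same argument applies to $\nabla_-$. Being injective linear endomorphisms of a finite-dimensional space, $\nabla_\pm$ are bijective.

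Next I would carry out the direct evaluation for $\nabla_+$. Write $L_i=\toepL_{p\times p}(R_i)$. The computation rests on three elementary facts: (i) $Z_{n,p}L_i=L_iZ_{n,p}$, because for a block lower-triangular Toeplitz matrix, shifting its block rows down by one coincides blockwise with shifting its block columns left by one (a short index check handles the boundary block row and column); (ii) $Z_{n,p}Z_{n,p}^{\T}=I_{pn}-E_1E_1^{\T}$, where $E_1\in\mathbb{R}^{pn\times p}$ picks out the first block; and (iii) $L_iE_1=R_i$, since by definition the first block column of $\toepL_{p\times p}(R_i)$ is $R_i$. Combining them,
\[
 Z_{n,p}\,L_1L_2^{\T}\,Z_{n,p}^{\T}=L_1\bigl(Z_{n,p}Z_{n,p}^{\T}\bigr)L_2^{\T}=L_1L_2^{\T}-(L_1E_1)(L_2E_1)^{\T}=L_1L_2^{\T}-R_1R_2^{\T},
\]
so $\nabla_+\bigl(L_1L_2^{\T}\bigr)=R_1R_2^{\T}$, as required.

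The $\nabla_-$ case is the mirror image. With $U_i=\toepU_{p\times p}(R_i)$ one has $Z_{n,p}^{\T}U_i=U_iZ_{n,p}^{\T}$ (shifting block rows up by one coincides with shifting block columns right by one), $Z_{n,p}^{\T}Z_{n,p}=I_{pn}-E_nE_n^{\T}$ with $E_n$ picking out the last block, and $U_iE_n=R_i$ because the last block column of $\toepU_{p\times p}(R_i)$ is $R_i$; the same three-line computation then gives $\nabla_-\bigl(U_1U_2^{\T}\bigr)=R_1R_2^{\T}$. I expect the only point demanding care to be the two commutation identities $Z_{n,p}\toepL_{p\times p}(\cdot)=\toepL_{p\times p}(\cdot)Z_{n,p}$ and $Z_{n,p}^{\T}\toepU_{p\times p}(\cdot)=\toepU_{p\times p}(\cdot)Z_{n,p}^{\T}$: they are routine but one must verify that the naive ``slide the Toeplitz band'' picture is not spoiled by the extra zero blocks that appear in the first/last block row and column. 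Everything else is bookkeeping, and the bijectivity from the first step is exactly what upgrades the easy ``$\Leftarrow$'' verifications to the full equivalences.
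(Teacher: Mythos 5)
Your proof is correct, and there is nothing in the paper to compare it against: the paper states this lemma with a citation to Kailath--Kung--Morf and gives no proof of its own. Your two key verifications check out --- the commutations $Z_{n,p}\toepL_{p\times p}(R_i)=\toepL_{p\times p}(R_i)Z_{n,p}$ and $Z_{n,p}^{\T}\toepU_{p\times p}(R_i)=\toepU_{p\times p}(R_i)Z_{n,p}^{\T}$ do survive the boundary blocks (both sides have a zero first block row and zero last block column in the lower-triangular case, and the mirror statement in the upper-triangular case), $Z_{n,p}Z_{n,p}^{\T}=I-E_1E_1^{\T}$ and $Z_{n,p}^{\T}Z_{n,p}=I-E_nE_n^{\T}$ are exactly right, and with the paper's convention for $\toepU$ the last block column is indeed $R_i$. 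Upgrading the easy direction to the full equivalence via injectivity of $\nabla_\pm$ (from $Z_{n,p}^{\,n}=0$) is sound. For comparison with the classical route in the cited source: there one typically writes the inverse of the displacement operator explicitly as the telescoping sum $R=\sum_{k=0}^{n-1}Z_{n,p}^{\,k}\,(R-Z_{n,p}RZ_{n,p}^{\T})\,(Z_{n,p}^{\T})^{k}$ and observes that $\sum_k Z_{n,p}^{\,k}R_1R_2^{\T}(Z_{n,p}^{\T})^{k}=\toepL_{p\times p}(R_1)\toepL_{p\times p}(R_2)^{\T}$; that argument and yours are essentially the same bijectivity statement, yours packaged as injectivity plus one direct evaluation, the classical one as an explicit inverse formula. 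Either way the lemma is fully established.
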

\Cref{lm:equiv-expression} implies that for a matrix its displacement rank is related to how it can be expressed as a sum of products of block-Toeplitz matrices, as is shown in \cref{lm:displacement-rank}.
\begin{lemma}[{\cite{kailathKM1979displacement,kailathC1994generalized,kailathS1995displacement}}]\label{lm:displacement-rank}
	Given a matrix $R\in \mathbb{R}^{pn\times pn}$.
	\begin{subequations}\label{eq:lm:displacement-rank}
		\begin{enumerate}
			\item Its $(+)$-displacement rank $\alpha_+(R,p)$ is the smallest integer $\beta$ such that $R$ can be written in the form
				\begin{equation}\label{eq:lm:displacement-rank:+}
					R=\sum_{i=1}^{\beta}\toepL_{p\times p}(R_{i})\toepU_{p\times p}(\wtd R_{i}),
				\end{equation}
				where $R_{i},\wtd R_{i}\in \mathbb{R}^{pn\times p}$.
			\item Its $(-)$-displacement rank $\alpha_-(R,p)$ is the smallest integer $\beta$ such that $R$ can be written in the form
				\begin{equation}\label{eq:lm:displacement-rank:-}
					R=\sum_{i=1}^{\beta}\toepU_{p\times p}(R_{i})\toepL_{p\times p}(\wtd R_{i}),
				\end{equation}
				where $R_i,\wtd R_i\in \mathbb{R}^{pn\times p}$.
			\item If $R$ is symmetric and positive semidefinite, \cref{eq:lm:displacement-rank:+,eq:lm:displacement-rank:-} can be replaced respectively by
				\[
					R=\sum_{i=1}^{\beta}\toepL_{p\times p}(R_{i})\toepL_{p\times p}(R_{i})^{\T},\qquad \text{and} \qquad
					R=\sum_{i=1}^{\beta}\toepU_{p\times p}(R_{i})\toepU_{p\times p}(R_{i})^{\T}.
				\]

			\item If $R$ is nonsingular, then $\alpha_+(R,p)=\alpha_-(R^{-1},p),\alpha_-(R,p)=\alpha_+(R^{-1},p)$.
		\end{enumerate}
	\end{subequations}
\end{lemma}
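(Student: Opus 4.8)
The plan is to derive parts~(1) and~(2) directly from \cref{lm:equiv-expression}, obtain part~(4) from a Gohberg--Semencul-type inversion identity, and get part~(3) by refining the generators of~(1)--(2) into a sum of squares. For~(1) and~(2) the only extra ingredient is invertibility of the displacement operators: with $Z=Z_{n,p}$, $\Phi_+(X)=X-ZXZ^{\T}$, $\Phi_-(X)=X-Z^{\T}XZ$, and $Z^{n}=0$, both $\Phi_\pm$ are bijections on $\mathbb{R}^{pn\times pn}$ with $\Phi_+^{-1}(Y)=\sum_{k=0}^{n-1}Z^{k}Y(Z^{\T})^{k}$ and $\Phi_-^{-1}(Y)=\sum_{k=0}^{n-1}(Z^{\T})^{k}YZ^{k}$, and $\alpha_\pm(R,p)=\rank_p(\Phi_\pm(R))$, where $\rank_p(Y)$ is the least $\beta$ with $Y=\sum_{i=1}^{\beta}R_iS_i^{\T}$, $R_i,S_i\in\mathbb{R}^{pn\times p}$. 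For~(1): if $R$ has the form \cref{eq:lm:displacement-rank:+}, rewrite each $\toepU_{p\times p}(\wtd R_i)$ as $\toepL_{p\times p}(\wtd R_i^{\mathrm{rev}})^{\T}$ via the transpose identities of \cref{sec:preliminary}, apply the linear map $\Phi_+$ summand by summand, and use \cref{lm:equiv-expression} to get $\Phi_+(R)=\sum_i R_i(\wtd R_i^{\mathrm{rev}})^{\T}$, so $\alpha_+(R,p)\le\beta$; conversely, take a minimal representation $\Phi_+(R)=\sum_{i=1}^{\alpha_+}R_iS_i^{\T}$, apply $\Phi_+^{-1}$, and use injectivity of $\Phi_+$ together with \cref{lm:equiv-expression} to read off $R=\sum_i\toepL_{p\times p}(R_i)\toepL_{p\times p}(S_i)^{\T}=\sum_i\toepL_{p\times p}(R_i)\toepU_{p\times p}(S_i^{\mathrm{rev}})$, which is \cref{eq:lm:displacement-rank:+} with exactly $\alpha_+(R,p)$ summands. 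Part~(2) is the identical argument under $\Phi_+\leftrightarrow\Phi_-$, $Z\leftrightarrow Z^{\T}$, $\toepL\leftrightarrow\toepU$.

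For~(3), when $R=R^{\T}$ the displacement $\Phi_+(R)$ is symmetric; the plan is to take its spectral decomposition and group the resulting columns into blocks of $p$, obtaining $\Phi_+(R)=\sum_{i=1}^{\beta}G_iG_i^{\T}$ with $G_i\in\mathbb{R}^{pn\times p}$ and $\beta=\rank_p(\Phi_+(R))=\alpha_+(R,p)$, then apply $\Phi_+^{-1}$ and \cref{lm:equiv-expression} to get $R=\sum_{i=1}^{\beta}\toepL_{p\times p}(G_i)\toepL_{p\times p}(G_i)^{\T}$ (and the $\toepU$ version from $\Phi_-$). \emph{The step I expect to be the main obstacle is making that spectral decomposition usable, i.e.\ showing $R-ZRZ^{\T}\succeq0$}: this does not follow from $R\succeq0$ alone --- a nonscalar $2\times2$ positive-definite Toeplitz matrix already has an indefinite displacement --- so the sign-free form of~(3) is correct precisely when $R-ZRZ^{\T}\succeq0$ (otherwise the factorization must carry a signature). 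For the matrices this lemma is actually used on in \cref{sec:dare,sec:care}, namely $I+T_tT_t^{\T}$ and its inverse, positive semidefiniteness of the displacement is immediate, so the sign-free form applies there.

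For~(4), fix a minimal generator of the $(+)$-displacement, $\Phi_+(R)=\what R\,\what S^{\T}$ with $\what R,\what S\in\mathbb{R}^{pn\times p\beta}$ and $\beta=\alpha_+(R,p)$, i.e.\ $R=ZRZ^{\T}+\what R\what S^{\T}$. Multiplying through by $R^{-1}$ on each side gives $R^{-1}ZRZ^{\T}=I-R^{-1}\what R\what S^{\T}$ and $ZRZ^{\T}R^{-1}=I-\what R\what S^{\T}R^{-1}$; inserting these together with the shift relations $Z^{\T}Z=I_{(n-1)p}\oplus 0_p$ and $ZZ^{\T}=0_p\oplus I_{(n-1)p}$ into $\Phi_-(R^{-1})=R^{-1}-Z^{\T}R^{-1}Z$ yields, after routine manipulation, an explicit expression of $\Phi_-(R^{-1})$ as a sum of $\beta$ products of $(pn\times p)$ block columns built from $R^{-1}\what R$ and $R^{-\T}\what S$ (the Gohberg--Semencul mechanism). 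Hence $\alpha_-(R^{-1},p)\le\alpha_+(R,p)$; applying the same to $R^{-1}$ in place of $R$ gives the reverse inequality, so $\alpha_-(R^{-1},p)=\alpha_+(R,p)$, and $\alpha_+(R^{-1},p)=\alpha_-(R,p)$ follows by interchanging the roles of $Z$ and $Z^{\T}$ throughout.
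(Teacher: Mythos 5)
The paper offers no proof of \cref{lm:displacement-rank}: it is imported verbatim from the cited references, so there is nothing internal to compare against and your argument has to be judged on its own. Parts (1) and (2) are fine --- invertibility of $\Phi_{\pm}$ (from $Z_{n,p}^{n}=0$), linearity, and \cref{lm:equiv-expression} applied summand by summand is the standard proof, and the $Z\leftrightarrow Z^{\T}$ symmetry handles (2). Your flag on part (3) is correct and is the most useful observation in the proposal: $R\succeq 0$ does not give $R-Z_{n,p}RZ_{n,p}^{\T}\succeq 0$, and the sign-free form genuinely fails without it. Concretely, for $p=1$, $n=2$, $R=\left[\begin{smallmatrix}1&a\\a&1\end{smallmatrix}\right]$ with $0<a<1$: any sum $\sum_i\toepL_{1\times 1}(R_i)\toepL_{1\times 1}(R_i)^{\T}$ with $R_i=\left[\begin{smallmatrix}x_i\\y_i\end{smallmatrix}\right]$ has $(2,2)$-entry minus $(1,1)$-entry equal to $\sum_i y_i^2$, so equal diagonal entries force all $y_i=0$ and hence a zero off-diagonal --- no number of terms suffices, let alone $\beta=\alpha_+(R,1)=2$. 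The correct version of (3) requires $R-ZRZ^{\T}\succeq 0$, equivalently the factorization must carry a signature $\Sigma$, exactly as the paper's own \cref{lm:form-of-inverse:+2} does; and, as you note, the displacements of $I+T_tT_t^{\T}$ and of its inverse are positive semidefinite, so nothing used downstream is harmed.

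The genuine gap is part (4). Because $Z_{n,p}$ is singular, ``multiplying the generator identity through by $R^{-1}$ and inserting the shift relations'' does not routinely close: conjugating by $Z^{\T}(\cdot)Z$ leaves behind the first and last block rows and columns, and the honest computation recovers $\Phi_-(R^{-1})$ only up to corner terms of block rank $2$ --- this is precisely the mechanism by which $\alpha_+$ and $\alpha_-$ of the \emph{same} matrix may differ by $2$, so your route risks establishing only $\alpha_-(R^{-1},p)\le\alpha_+(R,p)+2$. The classical proof sidesteps the manipulation entirely: set $M=\left[\begin{smallmatrix}R&ZR\\RZ^{\T}&R\end{smallmatrix}\right]$ and compute $\rank M$ via its two Schur complements, obtaining $\rank M=\rank R+\rank(R-ZRZ^{\T})$ and $\rank M=\rank R+\rank(R-RZ^{\T}R^{-1}ZR)$; since $R-RZ^{\T}R^{-1}ZR=R\bigl(R^{-1}-Z^{\T}R^{-1}Z\bigr)R$, this yields $\alpha_+(R,p)=\alpha_-(R^{-1},p)$ as an equality in one step, with no inequality chase. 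By inertia additivity the same computation shows, for symmetric $R$, that $\Phi_-(R^{-1})$ inherits the inertia of $\Phi_+(R)$ --- which is exactly what justifies applying the sign-free form of (3) to the inverses appearing in \cref{sec:dare,sec:care}. I would replace your part-(4) paragraph with this argument.
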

\Cref{lm:displacement-rank} demonstrates the relation between the displacement ranks of a matrix and its inverse, which is actually the theoretical foundation of the fast and superfast algorithms on Toeplitz matrices.

The following result, namely \cref{lm:form-of-inverse:+2}, gives an expression of the inverse related to the displacement rank.
\begin{lemma}[{\cite{friedlanderMKL1979new}}]\label{lm:form-of-inverse:+2}
	Given $R\in \mathbb{R}^{pn\times pn}$, suppose
	\begin{enumerate}
		\item $R$ is nonsingular, and
			$ R^{-1}=\begin{bmatrix}
				Q_{1,t} & Q^L_1\\
				Q_1 & *\\
				\end{bmatrix}=\begin{bmatrix}
				* & Q_2 \\ 
				Q^L_2 & Q_{2,b}\\
			\end{bmatrix}$ where $Q_{1,t},Q_{2,b}\in \mathbb{R}^{p\times p}$ are nonsingular;
		\item 
			$R-Z_{n,p}RZ_{n,p}^{\T}=\begin{bmatrix}
				* & * \\ * & D_1\Sigma D_2^{\T}
			\end{bmatrix}$ where $D_1,D_2\in \mathbb{R}^{p(n-1)\times p\alpha}$ and $\Sigma
			$ is a diagonal matrix whose diagonal entries are $\pm1$;
		\item writing $R=\begin{bmatrix}
				* & * \\
				* & R_s\\
			\end{bmatrix}$ where $R_s\in \mathbb{R}^{p(n-1)\times p(n-1)}$, there exist $Q_3,Q^L_3\in \mathbb{R}^{p(n-1)\times p\alpha}$ such that $R_sQ_3=D_1,Q^L_3R_s=D_2^{\T}$.
	\end{enumerate}
	Then
	\begin{subequations}\label{eq:lm:form-of-inverse:+2}
		\begin{align}
\label{eq:lm:form-of-inverse:+2:-}
			R^{-1}&=
			-\toepU_{p\times p}\left(\begin{bmatrix}
					Q_1\\0_{p\times p}\\
					\end{bmatrix}\right)(I_{n}\otimes Q_{1,t})^{-1}\toepU_{p\times p}\left(\begin{bmatrix}
					Q^L_1&0_{p\times p}
			\end{bmatrix}^{\T}\right)^{\T}
			\nonumber\\&\qquad	+\toepU_{p\times p}\left(\begin{bmatrix}
					Q_2\\Q_{2,b}\\
					\end{bmatrix}\right)(I_{n}\otimes Q_{2,b})^{-1}\toepU_{p\times p}\left(\begin{bmatrix}
					Q^L_2&Q_{2,b}\\
			\end{bmatrix}^{\T}\right)^{\T}
			\nonumber\\&\qquad	+\toepU_{p\times p\alpha }\left(\begin{bmatrix}
					Q_3\\0_{p\times p\alpha}\\
					\end{bmatrix}\right)(I_{n}\otimes W)^{-1}\toepU_{p\times p\alpha}\left(\begin{bmatrix}
					Q^L_3&0_{p\alpha\times p}\\
			\end{bmatrix}^{\T}\right)^{\T}
			,
			\intertext{or alternatively,}
\label{eq:lm:form-of-inverse:+2:+}
			R^{-1}&=
			\toepL_{p\times p}\left(\begin{bmatrix}
					Q_{1,t}\\Q_1\\
					\end{bmatrix}\right)(I_{n}\otimes Q_{1,t})^{-1}\toepL_{p\times p}\left(\begin{bmatrix}
					Q_{1,t}&Q^L_1
			\end{bmatrix}^{\T}\right)^{\T}
			\nonumber\\&\qquad	-\toepL_{p\times p}\left(\begin{bmatrix}
					0_{p\times p}\\Q_2\\
					\end{bmatrix}\right)(I_{n}\otimes Q_{2,b})^{-1}\toepL_{p\times p}\left(\begin{bmatrix}
					0_{p\times p}&Q^L_2\\
			\end{bmatrix}^{\T}\right)^{\T}
			\nonumber\\&\qquad	-\toepL_{p\times p\alpha}\left(\begin{bmatrix}
					0_{p\times p\alpha}\\Q_3\\
					\end{bmatrix}\right)(I_{n}\otimes W)^{-1}\toepL_{p\times p\alpha}\left(\begin{bmatrix}
					0_{p\alpha\times p}&Q^L_3\\
			\end{bmatrix}^{\T}\right)^{\T}
			,
		\end{align}
	\end{subequations}
	where $W=\Sigma-Q^L_3D_1$.

	Moreover, if $R$ is symmetric, then there exists a factorization to make $D_1=D_2$; for that case, \cref{eq:lm:form-of-inverse:+2} can be rewritten by $Q^L_1=Q_1^{\T},Q^L_2=Q_2^{\T},Q^L_3=Q_3^{\T}$.
\end{lemma}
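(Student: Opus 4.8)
The plan is to collapse the whole statement into a single identity about the $(-)$-displacement of $R^{-1}$, and then to verify that identity by two Schur-complement computations. First I would note that, by \cref{lm:displacement-rank}, $\alpha_-(R^{-1},p)=\alpha_+(R,p)$ is finite, so \cref{lm:equiv-expression} says $R^{-1}$ is completely determined by $R^{-1}-Z_{n,p}^{\T}R^{-1}Z_{n,p}$, and that the right-hand side of \cref{eq:lm:form-of-inverse:+2:-} is precisely the matrix reconstructed from a prescribed $(-)$-displacement by the bijection of \cref{lm:equiv-expression} (pushing the middle factors into the generators via $\toepU_{p\times q}(UM^{-1})=\toepU_{p\times q}(U)(I\otimes M)^{-1}$). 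So the lemma reduces to showing
\[
\begin{multlined}
R^{-1}-Z_{n,p}^{\T}R^{-1}Z_{n,p}
=-\begin{bmatrix}Q_1\\0\end{bmatrix}Q_{1,t}^{-1}\begin{bmatrix}Q_1^L&0\end{bmatrix}\\
{}+\begin{bmatrix}Q_2\\Q_{2,b}\end{bmatrix}Q_{2,b}^{-1}\begin{bmatrix}Q_2^L&Q_{2,b}\end{bmatrix}
+\begin{bmatrix}Q_3\\0\end{bmatrix}W^{-1}\begin{bmatrix}Q_3^L&0\end{bmatrix}.
\end{multlined}
\]

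Next I would evaluate the left-hand side block by block. Because $Z_{n,p}^{\T}R^{-1}Z_{n,p}$ is $R^{-1}$ shifted up-and-left by one block, its last block-row and last block-column vanish, so the last block-row and block-column of the left-hand side are simply $\begin{bmatrix}Q_2^L&Q_{2,b}\end{bmatrix}$ and its column analogue, already matching the right-hand side. For the leading $p(n-1)\times p(n-1)$ block the same shift shows it equals the leading $p(n-1)\times p(n-1)$ block of $R^{-1}$ minus the trailing one. Partitioning $R$ once by splitting off its first block-row and column (trailing principal submatrix $R_s$) and once by splitting off its last block-row and column (leading principal submatrix $R_s'$) — the block-inverse formulae being legitimate since nonsingularity of $Q_{1,t}$, resp.\ $Q_{2,b}$, forces that of $R_s$, resp.\ $R_s'$, by a Schur-complement identity — the trailing block of $R^{-1}$ is $R_s^{-1}+Q_1Q_{1,t}^{-1}Q_1^L$ and the leading block is $(R_s')^{-1}+Q_2Q_{2,b}^{-1}Q_2^L$, so the leading block of the left-hand side is $(R_s')^{-1}-R_s^{-1}+Q_2Q_{2,b}^{-1}Q_2^L-Q_1Q_{1,t}^{-1}Q_1^L$.

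The one genuine computation that remains is the identity $(R_s')^{-1}-R_s^{-1}=Q_3W^{-1}Q_3^L$. The trailing $p(n-1)\times p(n-1)$ block of $R-Z_{n,p}RZ_{n,p}^{\T}$ is exactly $R_s-R_s'$, hence equals $D_1\Sigma D_2^{\T}$ by hypothesis, and the resolvent identity gives $(R_s')^{-1}-R_s^{-1}=Q_3\Sigma D_2^{\T}(R_s')^{-1}$ using $R_sQ_3=D_1$. Since $D_2^{\T}Q_3=Q_3^LR_sQ_3=Q_3^LD_1$ and $\Sigma^2=I$, the quantity $D_2^{\T}(R_s')^{-1}$ solves $(I-Q_3^LD_1\Sigma)\,D_2^{\T}(R_s')^{-1}=Q_3^L$, whose coefficient matrix is $W\Sigma$, so $D_2^{\T}(R_s')^{-1}=\Sigma W^{-1}Q_3^L$ and $(R_s')^{-1}-R_s^{-1}=Q_3W^{-1}Q_3^L$ — the solvability of that relation being precisely the claimed nonsingularity of $W=\Sigma-Q_3^LD_1$. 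Assembling the blocks proves \cref{eq:lm:form-of-inverse:+2:-}. Formula \cref{eq:lm:form-of-inverse:+2:+} then follows from the mirror argument with the $(+)$-displacement $R^{-1}-Z_{n,p}R^{-1}Z_{n,p}^{\T}$ (equivalently, by conjugating with the block-reversal permutation), and the symmetric addendum is immediate: then $R^{-1}$ and $R-Z_{n,p}RZ_{n,p}^{\T}$ are symmetric, $D_2$ may be chosen equal to $D_1$ with $\Sigma$ the inertia signature, the first/last block-rows of $R^{-1}$ are the transposes of the corresponding block-columns, and $Q_3^L=Q_3^{\T}$ drops out of $Q_3^LR_s=D_2^{\T}=D_1^{\T}=(R_sQ_3)^{\T}=Q_3^{\T}R_s$.

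I expect the main obstacle to be bookkeeping rather than anything deep: keeping straight which shifted copy of each border block $Q_1,Q_2,Q_3$ (and of the row versions $Q_1^L,Q_2^L,Q_3^L$) appears in which of the three Toeplitz factors, and the self-referential solve that identifies the middle block $W$; everything else is routine Schur-complement algebra.
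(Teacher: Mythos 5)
The paper does not actually prove this lemma: it is imported verbatim from the cited reference \cite{friedlanderMKL1979new}, so there is no in-paper argument to compare yours against. Judged on its own, your proof is correct and is essentially the classical displacement-structure derivation. The reduction is sound: the map $M\mapsto M-Z_{n,p}^{\T}MZ_{n,p}$ is a linear bijection (nilpotency of $Z_{n,p}$), each factor $\toepU(R_1)\toepU(R_2)^{\T}$ has $(-)$-displacement $R_1R_2^{\T}$ by the (rectangular-block extension of) \cref{lm:equiv-expression} after absorbing the middle factors into the generators, so it suffices to match displacements; the border blocks of $R^{-1}-Z_{n,p}^{\T}R^{-1}Z_{n,p}$ match by inspection; the two Schur-complement expansions give $(R^{-1})_{\mathrm{trailing}}=R_s^{-1}+Q_1Q_{1,t}^{-1}Q_1^L$ and $(R^{-1})_{\mathrm{leading}}=(R_s')^{-1}+Q_2Q_{2,b}^{-1}Q_2^L$; and your resolvent computation $(R_s')^{-1}-R_s^{-1}=Q_3W^{-1}Q_3^L$ checks out, since $(I-Q_3^LD_1\Sigma)D_2^{\T}(R_s')^{-1}=D_2^{\T}(R_s')^{-1}-D_2^{\T}R_s^{-1}(R_s-R_s')(R_s')^{-1}=Q_3^L$ and $W\Sigma=I-Q_3^LD_1\Sigma$. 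Two small places deserve one more line each: (i) you invoke \cref{lm:equiv-expression} for the uniqueness step, but that lemma only treats a single square-block generator pair, so you should say explicitly that you are using linearity plus injectivity of the displacement operator (and the rectangular-block analogue for the $Q_3$ term); (ii) the nonsingularity of $W$ is not merely "the claimed solvability" — it follows from that of $R_s$ and $R_s'$ via $R_s'=R_s(I-Q_3\Sigma D_2^{\T})$ and Sylvester's identity $\det(I-Q_3\Sigma D_2^{\T})\neq0\iff\det(I-\Sigma D_2^{\T}Q_3)\neq0$ together with $D_2^{\T}Q_3=Q_3^LD_1$. Neither is a gap, just bookkeeping you should record.
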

\begin{remark}\label{rk:lm:form-of-inverse:+2}
	Item~2 of \cref{lm:form-of-inverse:+2} implies that $R-Z_{n,p}^{\T}RZ_{n,p}=\begin{bmatrix}
		-D_1\Sigma D_2^{\T} & *\\ * &*
	\end{bmatrix}$.
\end{remark}
Note that \cref{eq:lm:form-of-inverse:+2} presents a sum of $\alpha+2$ products of block-Toeplitz matrices, in which the number of terms may not be the smallest one, namely $\alpha_{\mp}(R,p)$.

\bigskip
In the following, we will derive a sum of the $\alpha_+(R,p)=\alpha_-(R^{-1},p)$ terms, called  a \emph{shortest sum}, to coincide with \cref{lm:displacement-rank}.
Using the same way a sum of $\alpha_-(R,p)=\alpha_+(R^{-1},p)$ terms can also be derived, so we omit the details.

Write $R=\begin{bmatrix}
	R_{11} & R_{12} \\ R_{21} & R_s\\
	\end{bmatrix}$, and then $R-Z_{n,p}RZ_{n,p}^{\T}=\begin{bmatrix}
	R_{11} & R_{12}\\ R_{21} & D_1\Sigma D_2^{\T}\\
\end{bmatrix}$.
Thus, $\alpha\le\alpha_+(R,p)\le \alpha+2$,
provided that $\rank(D_1\Sigma D_2^T)=\rank(\Sigma)=p\alpha$.

	On the other hand, by \cref{eq:smwf}, under sufficient nonsingular conditions, it is easy to have
\begin{align*}
	\begin{bmatrix}
		R_{11} & R_{12} \\ R_{21} & R_s \\
	\end{bmatrix}^{-1}
	&=
	\begin{bmatrix}
		R_{11}^{-1}+R_{11}^{-1}R_{12}(R_s-R_{21}R_{11}^{-1}R_{12})^{-1}R_{21}R_{11}^{-1} 
		& -R_{11}^{-1}R_{12}(R_s-R_{21}R_{11}^{-1}R_{12})^{-1}\\
		-(R_s-R_{21}R_{11}^{-1}R_{12})^{-1}R_{21}R_{11}^{-1}
		&(R_s-R_{21}R_{11}^{-1}R_{12})^{-1} \\
	\end{bmatrix}
	\\&=\begin{bmatrix}
		(R_{11}-R_{12}R_s^{-1}R_{21})^{-1}
		&-(R_{11}-R_{12}R_s^{-1}R_{21})^{-1}R_{12}R_s^{-1}\\
		-R_s^{-1}R_{21}(R_{11}-R_{12}R_s^{-1}R_{21})^{-1}
		&
		R_s^{-1}+R_s^{-1}R_{21}(R_{11}-R_{12}R_s^{-1}R_{21})^{-1}R_{12}R_s^{-1}\\
	\end{bmatrix}
	.
\end{align*}
Compared with the conditions,
\begin{align*}
	Q_1 = -R_s^{-1}R_{21}Q_{1,t},\quad
	Q^L_1 = -Q_{1,t}R_{12}R_s^{-1},\quad
	Q_{1,t} = (R_{11}-R_{12}R_s^{-1}R_{21})^{-1}.
\end{align*}

If $\alpha_+(R,p)=\alpha$, then it has to hold that $R-Z_{n,p}RZ_{n,p}^{\T}=\begin{bmatrix}
	S_1^{\T}\Sigma^{-1}S_2 & S_1^{\T} D_2^{\T}\\ D_1S_2 & D_1\Sigma D_2^{\T}\\
\end{bmatrix}$ for some $S_1,S_2\in \mathbb{R}^{p\alpha \times p}$.
Clearly $S_1,S_2$ are of full column rank for $R$ is nonsingular.
Noticing $\Sigma^{-1}=\Sigma$,
we have
\begin{align*}
	Q_1 &= -R_s^{-1}D_1S_2Q_{1,t}=-Q_3S_2Q_{1,t},\quad\\
	Q^L_1 &= -Q_{1,t}S_1^{\T}D_2^{\T}R_s^{-1}=-Q_{1,t}S_1^{\T}Q^L_3,\quad\\
	Q_{1,t} &= (S_1^{\T}\Sigma^{-1}S_2-S_1^{\T}D_2^{\T}R_s^{-1}D_1S_2)^{-1}
	= (S_1^{\T}\Sigma^{-1}S_2-S_1^{\T}Q^L_3D_1S_2)^{-1}=(S_1^{\T}WS_2)^{-1}.
\end{align*}
Thus
\begin{align*}
\MoveEqLeft \toepU_{p\times p}\left(\begin{bmatrix}
					Q_1\\0\\
					\end{bmatrix}\right)(I_{n}\otimes Q_{1,t})^{-1}\toepU_{p\times p}\left(\begin{bmatrix}
					Q^L_1&0
			\end{bmatrix}^{\T}\right)^{\T}
	\\&=\toepU_{p\times p}\left(\begin{bmatrix}
		-Q_3S_2Q_{1,t}\\0\\
			\end{bmatrix}\right)(I_{n}\otimes Q_{1,t})^{-1}\toepU_{p\times p}\left(\begin{bmatrix}
			-Q_{1,t}S_1^{\T}Q^L_3&0\\
	\end{bmatrix}^{\T}\right)^{\T}
	\\&=\toepU_{p\times p\alpha}\left(\begin{bmatrix}
		Q_3\\0\\
			\end{bmatrix}\right)(I_{n}\otimes S_2Q_{1,t})(I_{n}\otimes Q_{1,t})^{-1}(I_{n}\otimes Q_{1,t}S_1^{\T})\toepU_{p\times p\alpha}\left(\begin{bmatrix}
			Q^L_3&0\\
	\end{bmatrix}^{\T}\right)^{\T}
	\\&=\toepU_{p\times p\alpha}\left(\begin{bmatrix}
		Q_3\\0\\
			\end{bmatrix}\right)(I_{n}\otimes S_2Q_{1,t}S_1^{\T})\toepU_{p\times p\alpha}\left(\begin{bmatrix}
			Q^L_3&0\\
	\end{bmatrix}^{\T}\right)^{\T}
	\\&=\toepU_{p\times p\alpha}\left(\begin{bmatrix}
		Q_3\\0\\
		\end{bmatrix}\right)\left(I_{n}\otimes S_2(S_1^{\T}WS_2)^{-1}S_1^{\T}\right)\toepU_{p\times p\alpha}\left(\begin{bmatrix}
			Q^L_3&0\\
	\end{bmatrix}^{\T}\right)^{\T}
	.
\end{align*}
Note that 
\begin{align*}
	\left[W^{-1}-S_2\left(S_1^{\T}WS_2\right)^{-1}S_1^{\T}\right]WS_2
	=0
	.
\end{align*}
Complement $S_2$ to a nonsingular matrix $\begin{bmatrix}
	S_2 & S_2^{c}
\end{bmatrix}$, and then
\begin{align*}
	\left[W^{-1}-S_2\left(S_1^{\T}WS_2\right)^{-1}S_1^{\T}\right]W\begin{bmatrix}
			S_2 & S_2^{c}
		\end{bmatrix}
		&=\begin{bmatrix}
			S_2 & S_2^{c}
		\end{bmatrix}\begin{bmatrix}
		0_{p\times p} & -\left(S_1^{\T}WS_2\right)^{-1}S_1^{\T}WS_2^{c}\\ 0 & I_{p(\alpha-1)}\\
		\end{bmatrix}
		\\&=\begin{bmatrix}
			S_2 & S_2^{c}
		\end{bmatrix}\begin{bmatrix}
		-\left(S_1^{\T}WS_2\right)^{-1}S_1^{\T}WS_2^{c}\\  I_{p(\alpha-1)}\\
		\end{bmatrix}\begin{bmatrix}
		0 & I_{p(\alpha-1)}
		\end{bmatrix},
\end{align*}
whose rank is $p(\alpha-1)$.
Write
\begin{align*}
	W_1&=\begin{bmatrix}
		S_2 & S_2^{c}
		\end{bmatrix}\begin{bmatrix}
		-\left(S_1^{\T}WS_2\right)^{-1}S_1^{\T}WS_2^{c}\\  I_{p(\alpha-1)}
	\end{bmatrix}\in \mathbb{R}^{p\alpha\times p(\alpha-1)},
	\qquad
	\\
	W^L_1&=\begin{bmatrix}
		0 & I_{p(\alpha-1)}
		\end{bmatrix}\begin{bmatrix}
		S_2 & S_2^{c}
	\end{bmatrix}^{-1}W^{-1}\in \mathbb{R}^{p(\alpha-1)\times p\alpha},
\end{align*}
	and then $W^{-1}-S_2\left(S_1^{\T}WS_2\right)^{-1}S_1^{\T}=W_1W^L_1$.
Hence 
\begin{equation}\label{eq:lm:form-of-inverse:0}
	\begin{aligned}[b]
				R^{-1}
				&=
				\toepU_{p\times p}\left(\begin{bmatrix}
						Q_2\\Q_{2,b}\\
						\end{bmatrix}\right)(I_{n}\otimes Q_{2,b})^{-1}\toepU_{p\times p}\left(\begin{bmatrix}
						Q^L_2&Q_{2,b}\\
				\end{bmatrix}^{\T}\right)^{\T}
				 +\toepU_{p\times p\alpha}\left(\begin{bmatrix}
						Q_3\\0\\
						\end{bmatrix}\right)(I_{n}\otimes W_1W^L_1)\toepU_{p\times p\alpha}\left(\begin{bmatrix}
						Q^L_3&0\\
				\end{bmatrix}^{\T}\right)^{\T}
				\\&=
				\toepU_{p\times p}\left(\begin{bmatrix}
						Q_2\\Q_{2,b}\\
						\end{bmatrix}\right)(I_{n}\otimes Q_{2,b})^{-1}\toepU_{p\times p}\left(\begin{bmatrix}
						Q^L_2&Q_{2,b}\\
				\end{bmatrix}^{\T}\right)^{\T}
				+\toepU_{p\times p(\alpha-1)}\left(\begin{bmatrix}
						Q_3W_1\\0\\
						\end{bmatrix}\right)\toepU_{p\times p(\alpha-1)}\left(\begin{bmatrix}
						W^L_1Q^L_3&0\\
				\end{bmatrix}^{\T}\right)^{\T}
		.	
		\end{aligned}
\end{equation}

If $\alpha_+(R,p)=\alpha+1$, then it holds that 
\begin{subequations}\label{eq:a_+=a+1}
	\begin{align}
		\label{eq:a_+=a+1:1}
	R-Z_{n,p}RZ_{n,p}^{\T}
	&=\begin{bmatrix}
			S_1^{\T}\Sigma^{-1}S_2 & S_1^{\T} D_2^{\T}\\ D_1S_2 & D_1\Sigma D_2^{\T}\\
			\end{bmatrix}+\begin{bmatrix}
			S_3 & D_3^{\T}\\0 & 0
			\end{bmatrix}
			\\
			&\quad\text{or}\quad \begin{bmatrix}
			S_1^{\T}\Sigma^{-1}S_2 & S_1^{\T} D_2^{\T}\\ D_1S_2 & D_1\Sigma D_2^{\T}\\
			\end{bmatrix}+\begin{bmatrix}
			S_3 & 0\\ D_3 & 0
		\end{bmatrix}
		\label{eq:a_+=a+1:2}
	\end{align}
\end{subequations}
for some $S_1,S_2\in \mathbb{R}^{p\alpha\times p}$, $S_3\in \mathbb{R}^{p\times p}$ and $D_3\in \mathbb{R}^{p(n-1)\times p}$.

Consider \cref{eq:a_+=a+1:1}.
Then,
\begin{align*}
	Q_1 &= -R_s^{-1}D_1S_2Q_{1,t}=-Q_3S_2Q_{1,t},\quad\\
	Q^L_1 &= -Q_{1,t}(S_1^{\T}D_2^{\T}+D_3^{\T})R_s^{-1}=-Q_{1,t}S_1^{\T}Q^L_3-Q_{1,t}D_3^{\T}R_s^{-1},\quad\\
	Q_{1,t} &= (S_1^{\T}\Sigma^{-1}S_2+S_3-(S_1^{\T}D_2^{\T}+D_3^{\T})R_s^{-1}D_1S_2)^{-1}
	\\&= (S_1^{\T}\Sigma^{-1}S_2+S_3-D_3^{\T}Q_3S_2-S_1^{\T}Q^L_3D_1S_2)^{-1}
	\\&=(S_3-D_3^{\T}Q_3S_2+S_1^{\T}WS_2)^{-1}.
\end{align*}
Thus,
\begin{align*}
\MoveEqLeft \toepU_{p\times p}\left(\begin{bmatrix}
					Q_1\\0\\
					\end{bmatrix}\right)(I_{n}\otimes Q_{1,t})^{-1}\toepU_{p\times p}\left(\begin{bmatrix}
					Q^L_1&0
			\end{bmatrix}^{\T}\right)^{\T}
	\\&=\toepU_{p\times p}\left(\begin{bmatrix}
		-Q_3S_2Q_{1,t}\\0\\
			\end{bmatrix}\right)(I_{n}\otimes Q_{1,t})^{-1}\toepU_{p\times p}\left(\begin{bmatrix}
-Q_{1,t}S_1^{\T}Q^L_3-Q_{1,t}D_3^{\T}R_s^{-1}&0\\
	\end{bmatrix}^{\T}\right)^{\T}
	\\&=
	\begin{multlined}[t]
	\toepU_{p\times p\alpha}\left(\begin{bmatrix}
		Q_3\\0\\
			\end{bmatrix}\right)(I_{n}\otimes S_2Q_{1,t})(I_{n}\otimes Q_{1,t})^{-1}(I_{n}\otimes Q_{1,t}S_1^{\T})\toepU_{p\times p\alpha}\left(\begin{bmatrix}
			Q^L_3&0\\
	\end{bmatrix}^{\T}\right)^{\T}
	\\+ \toepU_{p\times p\alpha}\left(\begin{bmatrix}
		Q_3\\0\\
			\end{bmatrix}\right)(I_{n}\otimes S_2Q_{1,t})(I_{n}\otimes Q_{1,t})^{-1}(I_{n}\otimes Q_{1,t})\toepU_{p\times p}\left(\begin{bmatrix}
			D_3^{\T}R_s^{-1}&0\\
	\end{bmatrix}^{\T}\right)^{\T}
	\end{multlined}
	\\&=
	\begin{multlined}[t]
		\toepU_{p\times p\alpha}\left(\begin{bmatrix}
		Q_3\\0\\
			\end{bmatrix}\right)(I_{n}\otimes S_2Q_{1,t}S_1^{\T})\toepU_{p\times p\alpha}\left(\begin{bmatrix}
			Q^L_3&0\\
	\end{bmatrix}^{\T}\right)^{\T}
	\\+ \toepU_{p\times p\alpha}\left(\begin{bmatrix}
		Q_3\\0\\
			\end{bmatrix}\right)(I_{n}\otimes S_2Q_{1,t})\toepU_{p\times p}\left(\begin{bmatrix}
			D_3^{\T}R_s^{-1}&0\\
	\end{bmatrix}^{\T}\right)^{\T}
	\end{multlined}
	\\&=
	\begin{multlined}[t]
		\toepU_{p\times p\alpha}\left(\begin{bmatrix}
		Q_3\\0\\
			\end{bmatrix}\right)(I_{n}\otimes S_2(S_3-D_3^{\T}Q_3S_2+S_1^{\T}WS_2)^{-1}S_1^{\T})\toepU_{p\times p\alpha}\left(\begin{bmatrix}
			Q^L_3&0\\
	\end{bmatrix}^{\T}\right)^{\T}
	\\+ \toepU_{p\times p\alpha}\left(\begin{bmatrix}
		Q_3\\0\\
			\end{bmatrix}\right)(I_{n}\otimes S_2Q_{1,t})\toepU_{p\times p}\left(\begin{bmatrix}
			D_3^{\T}R_s^{-1}&0\\
	\end{bmatrix}^{\T}\right)^{\T}
	.
	\end{multlined}
\end{align*}
Since
\begin{align*}
	W^{-1}-S_2(S_3-D_3^{\T}Q_3S_2+S_1^{\T}WS_2)^{-1}S_1^{\T}
	&=W^{-1}\left(W-WS_2(S_3-D_3^{\T}Q_3S_2+S_1^{\T}WS_2)^{-1}S_1^{\T}W\right)W^{-1}
	\\&\clue{\cref{eq:smwf}}{=}
	W^{-1}\left(W^{-1}+S_2(S_3-D_3^{\T}Q_3S_2)^{-1}S_1^{\T}\right)^{-1}W^{-1}
	\\&=
	\left(W+WS_2(S_3-D_3^{\T}Q_3S_2)^{-1}S_1^{\T}W\right)^{-1}
	=:W_1^{-1}
	,
\end{align*}
we have
\begin{equation}\label{eq:lm:form-of-inverse:1}
	\begin{aligned}[b]
				R^{-1}
				&=
				\begin{multlined}[t]
				\toepU_{p\times p}\left(\begin{bmatrix}
						Q_2\\Q_{2,b}\\
						\end{bmatrix}\right)(I_{n}\otimes Q_{2,b})^{-1}\toepU_{p\times p}\left(\begin{bmatrix}
						Q^L_2&Q_{2,b}\\
				\end{bmatrix}^{\T}\right)^{\T}
				\\+\toepU_{p\times p\alpha}\left(\begin{bmatrix}
						Q_3\\0\\
						\end{bmatrix}\right)(I_{n}\otimes W_1)^{-1}\toepU_{p\times p\alpha}\left(\begin{bmatrix}
						Q^L_3&0\\
				\end{bmatrix}^{\T}\right)^{\T}
	- \toepU_{p\times p\alpha}\left(\begin{bmatrix}
		Q_3\\0\\
			\end{bmatrix}\right)(I_{n}\otimes S_2Q_{1,t})\toepU_{p\times p}\left(\begin{bmatrix}
			D_3^{\T}R_s^{-1}&0\\
	\end{bmatrix}^{\T}\right)^{\T}
	\end{multlined}
				\\&=
				\begin{multlined}[t]
				\toepU_{p\times p}\left(\begin{bmatrix}
						Q_2\\Q_{2,b}\\
						\end{bmatrix}\right)(I_{n}\otimes Q_{2,b})^{-1}\toepU_{p\times p}\left(\begin{bmatrix}
						Q^L_2&Q_{2,b}\\
				\end{bmatrix}^{\T}\right)^{\T}
				\\+\toepU_{p\times p\alpha}\left(\begin{bmatrix}
						Q_3\\0\\
						\end{bmatrix}\right)(I_{n}\otimes W_1)^{-1}\toepU_{p\times p\alpha}\left(\begin{bmatrix}
						Q^L_3-W_1S_2Q_{1,t}D_3^{\T}R_s^{-1}&0\\
				\end{bmatrix}^{\T}\right)^{\T}
		.\end{multlined}
		\end{aligned}
\end{equation}
Similarly, for \cref{eq:a_+=a+1:2},
\begin{equation}\label{eq:lm:form-of-inverse:1'}
	\begin{aligned}[b]
				R^{-1}
				&=
				\begin{multlined}[t]
				\toepU_{p\times p}\left(\begin{bmatrix}
						Q_2\\Q_{2,b}\\
						\end{bmatrix}\right)(I_{n}\otimes Q_{2,b})^{-1}\toepU_{p\times p}\left(\begin{bmatrix}
						Q^L_2&Q_{2,b}\\
				\end{bmatrix}^{\T}\right)^{\T}
				\\+\toepU_{p\times p\alpha}\left(\begin{bmatrix}
						Q_3-R_s^{-1}D_3Q_{1,t}S_1^{\T}W_1\\0\\
						\end{bmatrix}\right)(I_{n}\otimes W_1)^{-1}\toepU_{p\times p\alpha}\left(\begin{bmatrix}
						Q^L_3&0\\
				\end{bmatrix}^{\T}\right)^{\T}
		,\end{multlined}
		\end{aligned}
\end{equation}
where $W_1=W+WS_2(S_3-S_1^{\T}Q^L_3D_3)^{-1}S_1^{\T}W$.

To sum up, we have \cref{lm:form-of-inverse}.
\begin{lemma}\label{lm:form-of-inverse}
	Given $R\in \mathbb{R}^{pn\times pn}$, suppose the conditions in \cref{lm:form-of-inverse:+2} hold.
	Then $\alpha \le \alpha_+(R,p)\le \alpha+2$, and the following statements hold.
	\begin{enumerate}
		\item if $\alpha_+(R,p)=\alpha$, then \cref{eq:lm:form-of-inverse:0} is a shortest sum.
		\item if $\alpha_+(R,p)=\alpha+1$, then \cref{eq:lm:form-of-inverse:1} or \cref{eq:lm:form-of-inverse:1'} is a shortest sum.
		\item if $\alpha_+(R,p)=\alpha+2$, then \cref{eq:lm:form-of-inverse:+2:-} is a shortest sum.
	\end{enumerate}

	Moreover, if $R$ is symmetric, then there exists a factorization to make $D_1=D_2$; for that case, \cref{eq:lm:form-of-inverse:+2:-,eq:lm:form-of-inverse:0,eq:lm:form-of-inverse:1,eq:lm:form-of-inverse:1'} can be rewritten by $Q^L_1=Q_1^{\T},Q^L_2=Q_2^{\T},Q^L_3=Q_3^{\T},S_1=S_2,D_3=0$.
\end{lemma}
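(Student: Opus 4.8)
The plan is to read off the statement from the case-by-case derivation carried out just above, using \cref{lm:displacement-rank} as the one external fact that certifies minimality. First I would establish the trichotomy $\alpha\le\alpha_+(R,p)\le\alpha+2$. Writing $R-Z_{n,p}RZ_{n,p}^{\T}=\begin{bmatrix}R_{11}&R_{12}\\R_{21}&D_1\Sigma D_2^{\T}\end{bmatrix}$ and splitting this into (trailing block) $+$ (first block row) $+$ (first block column minus corner), the first block row and first block column each raise the $p$-rank by at most one, giving the upper bound $\alpha+2$; the lower bound holds because the trailing block $D_1\Sigma D_2^{\T}$ is a submatrix and has $p$-rank $\alpha$ once $D_1,D_2$ are taken of full column rank and one recalls $\Sigma$ is invertible.

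Next, for each of the three regimes I would exhibit the claimed closed form and count its $p\times p$ block-Toeplitz summands, recalling that $\toepU_{p_1\times p_2}(\cdot)^{\T}$ is a lower-triangular block-Toeplitz factor by the identities in \cref{sec:preliminary}, and that a wide factor $\toepU_{p\times pk}(\cdot)$ splits columnwise into $k$ factors of block width $p$. When $\alpha_+(R,p)=\alpha+2$ there is nothing to do: \cref{eq:lm:form-of-inverse:+2:-} is already a sum of $1+1+\alpha$ such products. When $\alpha_+(R,p)=\alpha$, minimality forces the displacement into the rigid form $R-Z_{n,p}RZ_{n,p}^{\T}=\begin{bmatrix}S_1^{\T}\Sigma S_2&S_1^{\T}D_2^{\T}\\D_1S_2&D_1\Sigma D_2^{\T}\end{bmatrix}$ with $S_1,S_2$ of full column rank; substituting the resulting expressions for $Q_1,Q^L_1,Q_{1,t}$ into \cref{eq:lm:form-of-inverse:+2:-} merges the first and third Toeplitz products, and the complement construction $\bigl[S_2\ S_2^{c}\bigr]$, $W_1,W^L_1$ turns $W^{-1}-S_2(S_1^{\T}WS_2)^{-1}S_1^{\T}$ into a product of block width $p(\alpha-1)$, yielding \cref{eq:lm:form-of-inverse:0} with exactly $1+(\alpha-1)=\alpha$ terms. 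When $\alpha_+(R,p)=\alpha+1$, the displacement carries one extra rank-$p$ perturbation, sitting either in the first block row or the first block column; handling the two sub-cases and using \cref{eq:smwf} to fold the correction into the middle factor produces \cref{eq:lm:form-of-inverse:1} or \cref{eq:lm:form-of-inverse:1'}, again with $1+\alpha=\alpha+1$ terms.

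With the closed forms in hand, minimality is immediate: since $R$ is nonsingular, item~4 of \cref{lm:displacement-rank} gives $\alpha_-(R^{-1},p)=\alpha_+(R,p)$, and item~2 of the same lemma says no expansion of $R^{-1}$ as a sum of $\toepU\,\toepL$ products can use fewer than $\alpha_-(R^{-1},p)$ terms; each displayed form meets this count, hence is a shortest sum. For the symmetric case, $R=R^{\T}$ makes $R^{-1}$ symmetric and $R-Z_{n,p}RZ_{n,p}^{\T}$ symmetric, so by the factorization remark in \cref{lm:form-of-inverse:+2} (equivalently item~3 of \cref{lm:displacement-rank}) one may pick $D_1=D_2$; carrying this choice through the substitutions forces $Q^L_1=Q_1^{\T}$, $Q^L_2=Q_2^{\T}$, $Q^L_3=Q_3^{\T}$, $S_1=S_2$, and $D_3=0$, which is the asserted rewriting.

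The hard part will not be any single manipulation but the bookkeeping of the trichotomy together with the invertibility side-conditions: one must argue that $\alpha_+(R,p)=\alpha$ (resp.\ $\alpha+1$) genuinely forces the displacement into the rigid block forms used above, and then push the \cref{eq:smwf}-type reductions through while keeping track of which Schur complements and which of $Q_{1,t},Q_{2,b},S_1^{\T}WS_2,W_1$ are nonsingular --- exactly the ``sufficient nonsingular conditions'' invoked in passing. Making those hypotheses explicit, and verifying that the column-splitting of the $\toepU_{p\times p\alpha}$ and $\toepU_{p\times p(\alpha-1)}$ factors really produces the stated number of $p\times p$ Toeplitz products, is where the care lies.
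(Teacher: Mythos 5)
Your proposal follows the paper's own route essentially verbatim: the same trichotomy $\alpha\le\alpha_+(R,p)\le\alpha+2$ read off the block structure of $R-Z_{n,p}RZ_{n,p}^{\T}$, the same forced displacement forms in the two lower-rank cases with the \cref{eq:smwf}-based folding and the $\begin{bmatrix}S_2 & S_2^{c}\end{bmatrix}$, $W_1W^L_1$ construction yielding \cref{eq:lm:form-of-inverse:0,eq:lm:form-of-inverse:1,eq:lm:form-of-inverse:1'}, and minimality certified by items~2 and~4 of \cref{lm:displacement-rank} after splitting the wide block-Toeplitz factors into width-$p$ products. This matches the paper's argument, so no further comment is needed.
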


Then we devote \cref{lm:form-of-inverse:practical}.
\begin{lemma}\label{lm:form-of-inverse:practical}
	Given $Y\in \mathbb{R}^{p_1\times p_2}$, $Y^L\in \mathbb{R}^{p_2\times p_1}$, $D_{t-1}\in \mathbb{R}^{p_1(t-1)\times p_2}$, $D^L_{t-1}\in \mathbb{R}^{p_2\times p_1(t-1)}$, let 
	\begin{align*}
		T_t
		&=\toepL_{p_1\times p_2}\left(\begin{bmatrix}
				Y \\ D_{t-1}\\
			\end{bmatrix}\right)
			=\begin{bmatrix}
				Y  & 0\\ D_{t-1} & T_{t-1}
			\end{bmatrix}\in \mathbb{R}^{p_1t\times p_2t}
			,\quad
			\\
			T^L_t
			&=\toepL_{p_1\times p_2}\left(\begin{bmatrix}
					Y^L& D^L_{t-1}\\
			\end{bmatrix}^{\T}\right)^{\T}
			=\begin{bmatrix}
				Y^L & D^L_{t-1}\\ 0 & T^L_{t-1}
			\end{bmatrix}\in \mathbb{R}^{p_2t\times p_1t}
			.
	\end{align*}
	If $I_{p_1t}-T_tT^L_t$ is nonsingular,
\begin{subequations}\label{eq:lm:form-of-inverse:practical}
then
\begin{equation}
			(I_{p_1t}-T_tT^L_t)^{-1}=
			\begin{multlined}[t]
			\toepU_{p_1\times p_1}\left(\begin{bmatrix}
					Q_2\\Q_{2,b}\\
					\end{bmatrix}\right)(I_{t}\otimes Q_{2,b})^{-1}\toepU_{p_1\times p_1}\left(\begin{bmatrix}
					Q^L_2&Q_{2,b}\\
			\end{bmatrix}^{\T}\right)^{\T}
			\\
			\hspace{-1cm}		+\toepU_{p_1\times p_2}\left(\begin{bmatrix}
					Q_3\\0_{p_1\times p_2}\\
					\end{bmatrix}\right)(I_{t}\otimes \left[W+WY^LY W\right])^{-1}\toepU_{p_1\times p_2}\left(\begin{bmatrix}
					Q^L_3&0_{p_2\times p_1}\\
			\end{bmatrix}^{\T}\right)^{\T}
	,
			\end{multlined}
\end{equation}
where the following equations are solvable and $Q_{2,b},W+WY^LYW$ are nonsingular:
\begin{align}
	Q^L_3\left(I_{p_1(t-1)}-D_{t-1}D^L_{t-1}-T_{t-1}T^L_{t-1}\right)&=D^L_{t-1},
	\\
	\left(I_{p_1t}-D_{t-1}D^L_{t-1}-T_{t-1}T^L_{t-1}\right)Q_3&=D_{t-1},
\qquad W=-(I_{p_2}+Q^L_3D_{t-1}),
	\\
	\begin{bmatrix}
		Q^L_2& Q_{2,b}
	\end{bmatrix}(I_{p_1t}-T_tT^L_t)
	&=
		\begin{bmatrix}
			0 & I_{p_1}\\
	\end{bmatrix},\\
	(I_{p_1t}-T_tT^L_t)
	\begin{bmatrix}
		Q_2\\ Q_{2,b}
		\end{bmatrix}&=
		\begin{bmatrix}
			0 \\ I_{p_1}\\
	\end{bmatrix},
	\qquad Q_{2,b}\in \mathbb{R}^{p_1\times p_1}.
\end{align}
\end{subequations}
\end{lemma}
\begin{proof}
	First consider the case $p_1=p_2=p$.
	Since
	\[
	I_{pt}-T_tT^L_t=\begin{bmatrix}
		I_{p}-Y Y^L & -Y D^L_{t-1} \\ -D_{t-1}Y^L & I_{p(t-1)}-D_{t-1}D^L_{t-1}-T_{t-1}T^L_{t-1}
\end{bmatrix},
\]
and
\begin{align*}
	(I_{pt}-T_tT^L_t)-Z_{t,p}(I_{pt}-T_tT^L_t)Z_{t,p}^{\T}
	&=I_{pt}-Z_{t,p}Z_{t,p}^{\T}-T_tT^L_t+Z_{t,p}T_tT^L_tZ_{t,p}^{\T}
	\\&=\begin{bmatrix}
		I_{p} & \\ & 0
	\end{bmatrix}-\begin{bmatrix}
		Y  & 0\\ D_{t-1} & T_{t-1}
	\end{bmatrix}\begin{bmatrix}
		Y^L & D^L_{t-1}\\ 0 & T^L_{t-1}
	\end{bmatrix}+\begin{bmatrix}
	0 & 0\\  T_{t-1} & 0\\
	\end{bmatrix}\begin{bmatrix}
		0& T^L_{t-1} \\0&0\\  
	\end{bmatrix}
	\\&
	=\begin{bmatrix}
	I_{p}-Y Y^L & -Y D^L_{t-1} \\ -D_{t-1}Y^L & -D_{t-1}D^L_{t-1}
	\end{bmatrix}
	,
\end{align*}
we have $\alpha_+(I_{pt}-T_tT^L_t,p)=2$.
By \cref{lm:form-of-inverse}, since $\alpha=1$, the case falls in Item~2 with substitutions
\[
	D_3\leftarrow 0, S_3\leftarrow I_{p},D_1\leftarrow D_{t-1},\Sigma\leftarrow -I_{p(t-1)},D_2^{\T}\leftarrow D^L_{t-1}, S_1^{\T}\leftarrow Y, S_2\leftarrow Y^L.
\]
Then \cref{eq:lm:form-of-inverse:1} (or equivalently \cref{eq:lm:form-of-inverse:1'}) becomes
\[
	\begin{aligned}[b]
(I_{pt}-T_tT^L_t)^{-1}
				&=
				\begin{multlined}[t]
				\toepU_{p\times p}\left(\begin{bmatrix}
						Q_2\\Q_{2,b}\\
						\end{bmatrix}\right)(I_{t}\otimes Q_{2,b})^{-1}\toepU_{p\times p}\left(\begin{bmatrix}
						Q^L_2&Q_{2,b}\\
				\end{bmatrix}^{\T}\right)^{\T}
				\\+\toepU_{p\times p}\left(\begin{bmatrix}
						Q_3\\0\\
						\end{bmatrix}\right)(I_{t}\otimes [W+WY^LYW])^{-1}\toepU_{p\times p}\left(\begin{bmatrix}
						Q^L_3&0\\
				\end{bmatrix}^{\T}\right)^{\T}
		,\end{multlined}
		\end{aligned}
	\]
	where $Q_2,Q_{2,b},Q^L_2,Q_3,Q^L_3,W$ is as in \cref{eq:lm:form-of-inverse:practical}.

Then consider the case $p_1>p_2$. Complement $Y$ to a $p_1\times p_1$ matrix $\wtd Y=\begin{bmatrix}
	Y & 0
\end{bmatrix}$ and similarly for $\wtd D_{t-1}=\begin{bmatrix}
D_{t-1} & 0
\end{bmatrix}, \wtd Y^L=\begin{bmatrix}
	Y^L\\0
\end{bmatrix}, \wtd D^L_{t-1}=\begin{bmatrix}
D^L_{t-1} \\ 0
\end{bmatrix}$. Immediately we are able to use the result above on the case $p_1=p_2$ to obtain
$
	\left[I_{p_1t}-\toepL_{p_1\times p_1}\left(\begin{bmatrix}\wtd Y\\\wtd D_{t-1}\end{bmatrix}\right)\toepL_{p_1\times p_1}\left(\begin{bmatrix}\wtd Y^L&\wtd D^L_{t-1}\end{bmatrix}^{\T}\right)^{\T}\right]^{-1}$.
Note that 
\begin{align*}
	\toepL_{p_1\times p_1}\left(\begin{bmatrix}\wtd Y\\\wtd D_{t-1}\end{bmatrix}\right)\toepL_{p_1\times p_1}\left(\begin{bmatrix}\wtd Y^L&\wtd D^L_{t-1}\end{bmatrix}^{\T}\right)^{\T}
	&=\begin{bmatrix}
			* & 0 & * & 0 & \cdots\\
			\vdots & \vdots & \vdots & \vdots & \\
			* & 0 & * & 0 & \cdots\\
		\end{bmatrix}\begin{bmatrix}
		* &\cdots & *\\
		0 &\cdots & 0\\
		* &\cdots & *\\
		0 &\cdots & 0\\
			\vdots &  &\vdots  \\
		\end{bmatrix}
		\\&=\begin{bmatrix}
			* & * &  \cdots\\
			\vdots &  \vdots & \\
			* &  * &  \cdots\\
		\end{bmatrix}\begin{bmatrix}
		* &\cdots & *\\
		* &\cdots & *\\
			\vdots &  &\vdots  \\
		\end{bmatrix}
		\\&=
	\toepL_{p_1\times p_2}\left(\begin{bmatrix} Y\\ D_{t-1}\end{bmatrix}\right)\toepL_{p_1\times p_2}\left(\begin{bmatrix} Y^L& D^L_{t-1}\end{bmatrix}^{\T}\right)^{\T}
		.
\end{align*}
Thus, $\wtd Q_2=Q_2,\wtd Q^L_2=Q^L_2,\wtd Q_{2,b}=Q_{2,b}$, and
$\wtd Q_3=\begin{bmatrix}
	Q_3 & 0
\end{bmatrix}, \wtd Q^L_3=\begin{bmatrix}
	Q^L_3\\0
\end{bmatrix}$.
Therefore,
\begin{align*}
	\wtd W&= -I_{p_1} - \begin{bmatrix}
		Q^L_3 \\ 0
	\end{bmatrix}\begin{bmatrix}
	D_{t-1} & 0
	\end{bmatrix}=\begin{bmatrix}
	-I_{p_2}-Q^L_3D_{t-1} & 0\\
	0 & -I_{p_1-p_2}\\
	\end{bmatrix}=\begin{bmatrix}
	W & \\ & -I_{p_1-p_2}
	\end{bmatrix},
	\\
	\wtd W\wtd Y^L\wtd Y\wtd W&=\begin{bmatrix}
	W & \\ & -I_{p_1-p_2}
\end{bmatrix}\begin{bmatrix}
	Y^L\\0
\end{bmatrix}\begin{bmatrix}
Y& 0
\end{bmatrix}\begin{bmatrix}
	W & \\ & -I_{p_1-p_2}
\end{bmatrix}
=\begin{bmatrix}
	WY^LYW & \\ & 0
\end{bmatrix}.
\end{align*}
Hence
\begin{align*}
	\MoveEqLeft
	\toepU_{p_1\times p_1}\left(\begin{bmatrix}
				\wtd Q_3\\0\\
				\end{bmatrix}\right)(I_{t}\otimes [\wtd W+\wtd W\wtd Y^L\wtd Y\wtd W])^{-1}\toepU_{p_1\times p_1}\left(\begin{bmatrix}
				\wtd Q^L_3&0\\
		\end{bmatrix}^{\T}\right)^{\T}
		\\&=\begin{bmatrix}
			* & 0 & * & 0 & \cdots\\
			\vdots & \vdots & \vdots & \vdots & \\
			* & 0 & * & 0 & \cdots\\
		\end{bmatrix}\begin{bmatrix}
		* & &&&\\
		& -I &&&\\
		&&* &&\\
		 &&& -I &\\
		 &&&&\ddots\\
		\end{bmatrix}\begin{bmatrix}
		* &\cdots & *\\
		0 &\cdots & 0\\
		* &\cdots & *\\
		0 &\cdots & 0\\
			\vdots &  &\vdots  \\
		\end{bmatrix}
		\\&=\begin{bmatrix}
			* & * &  \cdots\\
			\vdots &  \vdots & \\
			* &  * &  \cdots\\
		\end{bmatrix}\begin{bmatrix}
		* &&\\
		& *&\\
		&& \ddots \\
		\end{bmatrix}\begin{bmatrix}
		* &\cdots & *\\
		* &\cdots & *\\
			\vdots &  &\vdots  \\
		\end{bmatrix}
		\\&=
	\toepU_{p_1\times p_2}\left(\begin{bmatrix}
				Q_3\\0\\
				\end{bmatrix}\right)(I_{t}\otimes [ W+ W Y^L Y W])^{-1}\toepU_{p_1\times p_2}\left(\begin{bmatrix}
				Q^L_3&0\\
		\end{bmatrix}^{\T}\right)^{\T}
		.
\end{align*}

Finally consider the case $p_1<p_2$. Complement $Y$ to a $p_2\times p_2$ matrix $\wtd Y=\begin{bmatrix}
	Y \\ 0
\end{bmatrix}$ and similarly for $\wtd D_{t-1}^{\T}=\begin{bmatrix}
* & 0 & * & 0&\cdots
\end{bmatrix}$ where $D_{t-1}^{\T}=\begin{bmatrix}
	 * & * & \cdots
\end{bmatrix}$, and $\wtd Y^L=\begin{bmatrix}
Y^L&0
\end{bmatrix}, \wtd D^L_{t-1}=\begin{bmatrix}
* & 0 & * & 0&\cdots
\end{bmatrix}$ where $D^L_{t-1}=\begin{bmatrix}
	 * & * & \cdots
\end{bmatrix}$.
To make things clear, two permutations $P,P_s$ are used to make 
$P\begin{bmatrix}
	\wtd Y\\
	\wtd D_{t-1}
\end{bmatrix}=\begin{bmatrix}
	Y\\ D_{t-1} \\ 0
\end{bmatrix}, P_s\wtd D_{t-1}=\begin{bmatrix}
	D_{t-1} \\ 0
\end{bmatrix}$.
So 
$\begin{bmatrix}
 	\wtd Y^L & \wtd D^L_{t-1}
 \end{bmatrix}P^{\T}=\begin{bmatrix}
 Y^L & D^L_{t-1} & 0
 \end{bmatrix}, 
 	\wtd D^L_{t-1}
 P_s^{\T}=\begin{bmatrix}
  D^L_{t-1} & 0
 \end{bmatrix}$, and
 \begin{align*}
 	P\toepL_{p_2\times p_2}\left(\begin{bmatrix}\wtd Y\\\wtd D_{t-1}\end{bmatrix}\right)
	&=\begin{bmatrix}
			\toepL_{p_1\times p_2}\left(\begin{bmatrix}
		Y\\ D_{t-1} 
	\end{bmatrix}\right)\\ 0\\
		\end{bmatrix}
		,
		\\
	\toepL_{p_2\times p_2}\left(\begin{bmatrix}\wtd Y^L&\wtd D^L_{t-1}\end{bmatrix}^{\T}\right)^{\T}P^{\T}
	&=\begin{bmatrix}
			\toepL_{p_1\times p_2}\left(\begin{bmatrix}
					Y^L & D^L_{t-1} 
			\end{bmatrix}^{\T}\right)^{\T}& 0\\
		\end{bmatrix}
		.
 \end{align*}
Then we use the result above on the case $p_1=p_2$ to obtain $
\left[I_{p_2t}-\toepL_{p_2\times p_2}\left(\begin{bmatrix}\wtd Y\\\wtd D_{t-1}\end{bmatrix}\right)\toepL_{p_2\times p_2}\left(\begin{bmatrix}\wtd Y^L&\wtd D^L_{t-1}\end{bmatrix}^{\T}\right)^{\T}\right]^{-1}$.
Note that
\[
	\begin{multlined}[t]
	P\left[I_{p_2t}-\toepL_{p_2\times p_2}\left(\begin{bmatrix}\wtd Y\\\wtd D_{t-1}\end{bmatrix}\right)\toepL_{p_2\times p_2}\left(\begin{bmatrix}\wtd Y^L&\wtd D^L_{t-1}\end{bmatrix}^{\T}\right)^{\T}\right]P^{\T}
	\\=\begin{bmatrix}
		I_{p_1t}-\toepL_{p_1\times p_2}\left(\begin{bmatrix}Y\\ D_{t-1}\end{bmatrix}\right)\toepL_{p_1\times p_2}\left(\begin{bmatrix}Y^L& D^L_{t-1}\end{bmatrix}^{\T}\right)^{\T} & \\ & I_{(p_2-p_1)t}
	\end{bmatrix}.
	\end{multlined}
\]
Thus,
\begin{align*}
	P\begin{bmatrix}
		\wtd Q_2\\
		\wtd Q_{2,b}\\
	\end{bmatrix}
	&=P\left[I_{p_2t}-\toepL_{p_2\times p_2}\left(\begin{bmatrix}\wtd Y\\\wtd D_{t-1}\end{bmatrix}\right)\toepL_{p_2\times p_2}\left(\begin{bmatrix}\wtd Y^L&\wtd D^L_{t-1}\end{bmatrix}^{\T}\right)^{\T}\right]^{-1}P^{\T}P\begin{bmatrix}
		0\\ I_{p_2}\\
	\end{bmatrix}
	\\&=\begin{bmatrix}
		\left[ I_{p_1t}-\toepL_{p_1\times p_2}\left(\begin{bmatrix}Y\\ D_{t-1}\end{bmatrix}\right)\toepL_{p_1\times p_2}\left(\begin{bmatrix}Y^L& D^L_{t-1}\end{bmatrix}^{\T}\right)^{\T} \right]^{-1} & \\ & I_{(p_2-p_1)t}
		\end{bmatrix}
		\begin{bmatrix}
			0\\\begin{bmatrix}
				I_{p_1} & 0
			\end{bmatrix}\\0\\\begin{bmatrix}
			0 & I_{p_2-p_1}
			\end{bmatrix}
		\end{bmatrix}
		\\&=\begin{bmatrix}
		\left[ I_{p_1t}-\toepL_{p_1\times p_2}\left(\begin{bmatrix}Y\\ D_{t-1}\end{bmatrix}\right)\toepL_{p_1\times p_2}\left(\begin{bmatrix}Y^L& D^L_{t-1}\end{bmatrix}^{\T}\right)^{\T} \right]^{-1}\begin{bmatrix}
			0 \\ I_{p_1}
		\end{bmatrix} & 0\\
		0 &\begin{bmatrix}
			0 \\ I_{p_2-p_1}
		\end{bmatrix}
	\end{bmatrix}
	\\&
	=\begin{bmatrix}
		\begin{bmatrix}
			Q_2\\ Q_{2,b}
		\end{bmatrix} & 0\\
		0 &\begin{bmatrix}
			0 \\ I_{p_2-p_1}
		\end{bmatrix}
	\end{bmatrix}
	,
\end{align*}
and similarly,
	$\begin{bmatrix}
		\wtd Q^L_2&
		\wtd Q_{2,b}\\
	\end{bmatrix}P^{\T}
		=\begin{bmatrix}
		\begin{bmatrix}
			Q^L_2 & Q_{2,b}
		\end{bmatrix} & 0\\
		0 &\begin{bmatrix}
			0 & I_{p_2-p_1}
		\end{bmatrix}
	\end{bmatrix}
	$.
	Therefore,
$
\wtd Q_{2,b}=\begin{bmatrix}
		Q_{2,b} & 0 \\
			0 & I_{p_2-p_1}
\end{bmatrix}$ and
\begin{align*}
	\MoveEqLeft[0] P\toepU_{p_2\times p_2}\left(\begin{bmatrix}
			\wtd Q_2\\ \wtd Q_{2,b}\\
			\end{bmatrix}\right)P^{\T}P(I_{t}\otimes \wtd  Q_{2,b})^{-1}P^{\T}P\toepU_{p_2\times p_2}\left(\begin{bmatrix}
			\wtd Q^L_2& \wtd Q_{2,b}\\
	\end{bmatrix}^{\T}\right)^{\T}P^{\T}
	\\&=
	\begin{bmatrix}
		\toepU_{p_1\times p_1}\left(\begin{bmatrix}
			Q_2\\ Q_{2,b}\\
	\end{bmatrix}\right) & 0\\ 0& I_{(p_2-p_1)t}
	\end{bmatrix}\begin{bmatrix}
	(I_{t}\otimes  Q_{2,b})^{-1} & \\ & I_{(p_2-p_1)t}
	\end{bmatrix}\begin{bmatrix}
		\toepU_{p_1\times p_1}\left(\begin{bmatrix}
				Q^L_2& Q_{2,b}\\
		\end{bmatrix}^{\T}\right) & 0\\ 0& I_{(p_2-p_1)t}
	\end{bmatrix}^{\T}
	\\&=
	\begin{bmatrix}
		\toepU_{p_1\times p_1}\left(\begin{bmatrix}
			Q_2\\ Q_{2,b}\\
	\end{bmatrix}\right)(I_{t}\otimes  Q_{2,b})^{-1}\toepU_{p_1\times p_1}\left(\begin{bmatrix}
				Q^L_2& Q_{2,b}\\
		\end{bmatrix}^{\T}\right)^{\T} & \\ & I_{(p_2-p_1)t}
	\end{bmatrix}
	.
\end{align*}
Similarly,
\begin{align*}
	P_s\wtd Q_3
	&=P_s\left[I_{p_2(t-1)}-\wtd D_{t-1}\wtd D^L_{t-1}-\wtd T_{t-1}\wtd T^L_{t-1}\right]^{-1}P_s^{\T}P_s\wtd D_{t-1}
	\\&
	=\begin{bmatrix}
		\left[I_{p_1(t-1)}- D_{t-1} D^L_{t-1}- T_{t-1} T^L_{t-1}\right]^{-1} & \\ & I_{(p_2-p_1)(t-1)}
		\end{bmatrix}\begin{bmatrix}
		D_{t-1}\\0
		\end{bmatrix}
		=\begin{bmatrix}
			Q_3 \\ 0 
	\end{bmatrix}
	,
\end{align*}
and similarly,
$
 	\wtd Q^L_3P_s^{\T}
 		=\begin{bmatrix}
 			Q^L_3 & 0 
 	\end{bmatrix}
 	,
 $
 and then
\begin{align*}
	\wtd W &= -I_{p_2} - \wtd Q_3^LP_s^{\T}P_s\wtd D_{t-1}=-I_{p_2}-Q_3^L D_{t-1}=W,
	\\
	\wtd W\wtd Y^L\wtd Y\wtd W&=W\begin{bmatrix}
		Y^L&0
\end{bmatrix}\begin{bmatrix}
Y\\ 0
\end{bmatrix}W
= WY^LYW.
\end{align*}
Hence
\begin{align*}
	\MoveEqLeft P\toepU_{p_2\times p_2}\left(\begin{bmatrix}
			\wtd Q_3\\ 0\\
			\end{bmatrix}\right)(I_{t}\otimes [\wtd  W+\wtd W\wtd Y^L\wtd Y\wtd W])^{-1}\toepU_{p_2\times p_2}\left(\begin{bmatrix}
			\wtd Q^L_3& 0\\
	\end{bmatrix}^{\T}\right)^{\T}P^{\T}
	\\&=
	\begin{bmatrix}
		\toepU_{p_1\times p_2}\left(\begin{bmatrix}
			Q_3\\ 0\\
	\end{bmatrix}\right) \\ 0
	\end{bmatrix}
	(I_{t}\otimes  [  W+ W Y^L Y W])^{-1}
	\begin{bmatrix}
		\toepU_{p_1\times p_2}\left(\begin{bmatrix}
			Q^L_3& 0\\
	\end{bmatrix}^{\T}\right) \\ 0
	\end{bmatrix}^{\T}
	\\&=
	\begin{bmatrix}
		\toepU_{p_1\times p_2}\left(\begin{bmatrix}
			Q_3\\ 0\\
	\end{bmatrix}\right)(I_{t}\otimes  [  W+ W Y^L Y W])^{-1}\toepU_{p_1\times p_2}\left(\begin{bmatrix}
			Q^L_3& 0\\
	\end{bmatrix}^{\T}\right)^{\T} & \\ & 0
	\end{bmatrix}
	.
	\qedhere
\end{align*}
\end{proof}
Finally, \cref{lm:form-of-inverse:dare} comes out as a corollary.
\begin{proof}[Proof of \cref{lm:form-of-inverse:dare}]
	Use \cref{lm:form-of-inverse:practical} with $Y=-(Y^L)^{\T}, D_{t-1}=-(D^L_{t-1})^{\T}$.
	Then we take $Q_{2,b} =Q_1$ to obtain the result.
\end{proof}

{\small
	\bibliographystyle{plain}
	\bibliography{../strings,sdals}
}

\end{document}